\documentclass[preprint,12pt,english]{elsarticle}

\usepackage[margin=2cm]{geometry}
\usepackage{mathrsfs}
\usepackage{amsfonts}
\usepackage{dsfont}
\usepackage{multirow}
\usepackage{bbold}
\usepackage{amssymb}
\usepackage{float}
\usepackage{placeins}
\usepackage[utf8]{inputenc}
\usepackage{hyperref}
\usepackage{amsthm}
\usepackage{tikz}
\usetikzlibrary{arrows.meta}
\usetikzlibrary{backgrounds,arrows, shapes,decorations.markings,positioning}
\usetikzlibrary{plotmarks,calc,fadings,decorations.pathreplacing,decorations.pathmorphing}
\tikzset{%
  >=latex,
  inner sep=0pt,%
  outer sep=2pt,%
  mark coordinate/.style={inner sep=0pt,outer sep=0pt,minimum size=3pt,
    fill=black,circle}%
}

\tikzset{-dot-/.style={decoration={
  markings,
  mark=at position #1 with {\fill circle (2.0pt);}},postaction={decorate}}} %%% in this line added a ;

\usepackage{pgfplots}
\usepackage{pgfplotstable}
\pgfplotsset{compat=1.13}
\usepackage{amsmath}
\usepackage{subfigure}
\usepackage{epstopdf}
\usepackage{graphicx}
\usepackage{color}
\usepackage{cleveref}

\newcommand{\Om}{\Omega}
\newcommand{\R}{\mathbb{R}}

\newcommand{\pa}{\partial}
\newcommand{\diff}[1]{{\mathrm{d}{#1}}}
\newcommand{\D}{\mathcal{D}}

\newtheorem{theorem}{Theorem}

\newtheorem{proposition}[theorem]{Proposition}

\newtheorem{remark}[theorem]{Remark}

\journal{Elsevier}

\begin{document}

\begin{frontmatter}

%% Title, authors and addresses

\title{Minimization-based polynomial corrections for \\ high-order curved boundaries on fixed and moving domains: \\
assessment on finite volume and discontinuous Galerkin schemes}

%use optional labels to link authors explicitly to addresses:
\author[lab1]{Mirco Ciallella\corref{cor1}}
\author[lab2,dmi]{Walter Boscheri}

\address[lab1]{Universit\'e Paris Cit\'e and Sorbonne Universit\'e, CNRS, LJLL, F-75005 Paris, France}
\address[lab2]{Laboratoire de Math\'ematiques, Universit\'e Savoie Mont Blanc, CNRS, UMR 5127, Chamb\'ery, France}
\address[dmi]{Department of Mathematics and Computer Science, University of Ferrara, Ferrara, Italy}

\cortext[cor1]{Corresponding author (\href{mailto:mirco.ciallella@u-pariscite.fr}{mirco.ciallella@u-pariscite.fr})}

%% Abstract
\begin{abstract}
In this work, we present two novel strategies to impose high-order boundary conditions on fixed and moving curved domains, approximated with piecewise affine triangulations. 
Achieving high-order accuracy on curved domains consists in tackling both the PDE discretization error and the geometrical error simultaneously. 
Although the former can be reduced by employing high-order numerical methods such as finite volume and discontinuous Galerkin, the latter requires either a high-order parametrization of the physical domain or a consistent approximation of the boundary conditions. 
Minimization-based approaches like the Reconstruction for Off-site Data (ROD) method allow one to skip the construction of high-order curvilinear meshes by defining high-order consistent boundary conditions on a computational boundary that does not match the physical one. 
Therefore, the ROD approach is able to mitigate the second-order geometrical error, introduced by the approximation of the domain through simplicial elements, by retrieving a modified polynomial in each boundary cell which is as close as possible to the internal one while enforcing exactly the boundary conditions on a set of points located on the physical boundary. However, the standard ROD approach requires the inversion of a linear system in each boundary cell that introduces a computational cost, which grows with the polynomial degree and the refinement level of the mesh.
Inspired by a recent one-dimensional analysis, we show here that ROD-type approaches can be recast as simple polynomial corrections which can be applied without the inversion of any linear system. This greatly simplifies the development of minimization-based boundary treatments within various numerical frameworks, and reduces the computational cost related to the boundary treatment especially when dealing with complex applications.    
To prove the wide applicability of our novel strategy, we propose to develop it in a Runge-Kutta discontinuous Galerkin framework and in an ADER arbitrary-Lagrangian-Eulerian finite volume framework for the simulation of compressible flows on fixed and moving domains, respectively. Several numerical experiments with Dirichlet-type and slip-wall boundary conditions are presented with convergence analysis up to fifth order in both 2D and 3D.
\end{abstract}

%%Graphical abstract
%\begin{graphicalabstract}
%\includegraphics{grabs}
%\end{graphicalabstract}

%%Research highlights
% \begin{highlights}
% \item Research highlight 1
% \item Research highlight 2
% \end{highlights}

%% Keywords
\begin{keyword}
Curved domains; high-order boundary correction; arbitrary-Lagrangian-Eulerian; finite volume and discontinuous Galerkin; compressible gas dynamics 
\end{keyword}

\end{frontmatter}

%% Use \section commands to start a section
\section{Introduction}\label{section:introduction}

Achieving high-order of accuracy on curved domains is a real challenge in computational physics.
This is related to the fact that the total discretization error is given by the error produced by the numerical method used to approximate the governing partial differential equation (PDE) and by the geometrical error induced by the domain approximation. Although high-order methods have shown a strong potential in obtaining lower errors on coarser grids \cite{Wang_et_al_ijnmf13,cockburn1998runge,jiang1996efficient,dumbser2008unified}, an appropriate treatment of curved boundaries is crucial to maintain the expected precision. 

In high-fidelity simulations, a common strategy to handle curved geometries is the isoparametric approach \cite{bassi1997high,zienkiewic}, where the same polynomial degree is used to approximate both the solution and the physical domain. This practice ensures optimal convergence rates and enhances accuracy on coarse meshes, particularly in the presence of curved boundaries. 
Existing strategies for curved geometry representation include classical polynomial mappings as well as more recent approaches based on rational B-splines and NURBS, as employed in isogeometric analysis~\cite{nurbs0,PEZZANO2021110093}. In all cases, however, the quality of the curved mesh plays a crucial role in the overall accuracy and robustness of the simulation. To simplify this process, some methods propose to curve straight faced meshes~\cite{dey2001towards,luo2004automatic,sahni2010curved}.
However, despite significant progress in recent years, the generation of high quality curvilinear meshes and the simulation of problems defined on them remains a complex task, still subject of intense research. The mesh generation itself introduces further difficulties, as advanced techniques are often required to guarantee mesh validity and element quality. On the contrary, linear mesh generation is now a well-established technology, thanks to decades of development and its widespread use across research and industry. 
The same considerations hold when dealing with moving curved domains. Arbitrary-Lagrangian-Eulerian (ALE) formulations \cite{kucharik2011hybrid,shashkov2008closure,boscheri2013arbitrary} are particularly well-suited to follow moving boundaries, as the mesh can adapt to the motion of the geometry while allowing a certain flexibility compared to pure Lagrangian schemes \cite{munz1994godunov,carre2009cell,despres2003symmetrization,atallah2024weak,kincl2025semi,Maire2009}. However, achieving high-order accuracy in this context requires ALE methods on curved moving meshes \cite{boscheri2016high}, which can become extremely cumbersome from the computational viewpoint. The need to continuously update the mesh, combined with the complexities of curved element handling, significantly increases the algorithmic and computational burden, especially for large-scale or long-time simulations.

A more computationally efficient alternative consists in improving the accuracy of boundary conditions on simplicial meshes, where the geometry is discretized using simple linear or planar elements such as triangles or tetrahedra. In this setting, high-order precision is achieved not by refining the mesh, but by suitably modifying the boundary conditions. This approach, however, demands a careful treatment of the local geometric features near the boundary. Early contributions in this direction can be found in \cite{WangSun,krivodonova2006high}, where curvature-corrected boundary conditions were introduced and applied to high-order schemes.
More recent works in the context of high-order corrections for general boundary corrections can be found in \cite{costa2018very,costa2019very,santos2024very,ciallella2023shifted,ciallella2024very,boscheri2025high}.
Among these, the Reconstruction for Off-site Data (ROD) method was developed to improve the consistency of boundary conditions on curved domains discretized with simplicial elements in a finite volume (FV) framework \cite{costa2018very}. Its goal was to perform a finite volume high-order reconstruction at boundary elements with a modified polynomial that is aware of the exact boundary conditions. More precisely, this was formulated by defining a constrained least-squares problem where Lagrange multipliers are used to enforce the exact boundary conditions. For discontinous Galerkin (DG) methods, the ROD approach consists in finding a polynomial that minimize the distance with the internally extrapolated one while enforcing exactly the boundary conditions on the curved boundary \cite{santos2024very,ciallella2024very}. By employing the same polynomial basis used for the internal polynomial also for the modified one, the DG-ROD formulation appears to have a simplified linear system to be inverted. However, the method still involves the solution of this linear system which needs to be inverted at every time step and in each boundary cell for moving boundaries.
Inspired by a one-dimensional analysis \cite{ciallella2025minimization}, where only one boundary constraint is imposed, we show that the ROD method can be recast as a simple polynomial correction, which relies upon the evaluation of basis functions at the real boundary point, in the spirit of the shifted boundary polynomial correction \cite{ciallella2023shifted,boscheri2025high}. 

The core idea of this work is that the one-dimensional polynomial correction can be extended to a wide range of computational frameworks, requiring only a polynomial representation of the internal solution, and to multiple dimensions by applying the correction pointwise in the boundary integrals at quadrature points, while always preserving high-order consistency. In particular, the paper will consider two polynomial corrections based on different distance functions in the minimization problem: we will refer to the one based on the squared Euclidean norm as the ROD-E correction, and to the one based on the $L^2$ norm as the ROD-$L^2$ correction.
It should be noted that, although in one dimension the ROD-E correction is a reformulation of the original ROD method, in multiple dimensions, where the correction is imposed pointwise along the boundary mapping, a different scheme is obtained. Through a computational complexity analysis, we show that the novel corrections in multiple dimensions are significantly cheaper than the original ROD formulation, which involves multiple constraints and thus requires the inversion of the constraint matrix.
To prove the effectiveness and robustness of these methods, we validate their high-order convergence properties by performing simulations with Runge-Kutta (RK) DG methods on curved fixed domains \cite{cockburn1998runge} and with ADER-ALE FV methods on moving curved domains \cite{boscheri2013arbitrary}. As mentioned above, the only requirement is a polynomial representation of the internal solution, and the method can be applied straightforwardly. In RK-DG, the polynomial correction exploits the internal polynomial of the boundary cell. In the FV context, it simply uses the high-order reconstruction, expressed in polynomial form. For ADER methods, both in the FV and DG variants, the correction can be applied using the high-order space-time predictor. The same applies when ADER methods are used within the ALE framework.

The rest of the paper is organized as follows.
In \cref{section:model}, we present the governing equations used in this work: the nonlinear Euler equations for compressible gas dynamics.
In \cref{section:ALE-FV-DG}, we present the two computational frameworks used to validate the new methods. In particular, in \cref{sec:DG}, the RK-DG framework is presented in the context of fixed domains. While, in \cref{sec:FV}, the ADER-ALE FV framework is presented in the context of moving domains.
\cref{section:ROD} presents the high-order boundary conditions used in this work. This section discusses first the literature of the ROD method and the original approach in the context of DG. Afterwards, it presents the novel polynomial corrections and how to implement them in general computational frameworks. Numerical experiments are presented in \cref{section:ResultsDG} and in \cref{section:ResultsFV} for the RK-DG and the ADER-ALE FV frameworks, respectively. Conclusions and future perspectives are given in \cref{section:Conclusions}.

\section{Governing equations}\label{section:model}

We consider the numerical approximation of solutions of the Euler equations for compressible gas dynamics in $d$ space dimensions, written as:

\begin{equation}\label{eq:euler0}
\partial_t \mathbf{q}+\nabla\cdot\mathbf{F}\left(\mathbf{q}\right) =0, \quad \text{on}\quad \Omega_T=\Omega\times[0,T]\subset\mathbb{R}^d\times\mathbb{R}^+,
\end{equation}
where $\mathbf{q}$ is the vector of conserved variables and $\mathbf{F}$ the nonlinear flux, respectively defined as:
\begin{equation}\label{eq:euler0a}
\mathbf{q}=
\begin{bmatrix}
\rho \\ \rho \mathbf{U}  \\ \rho E
\end{bmatrix},\qquad
\mathbf{F}(\mathbf{q})= 
\begin{bmatrix}
\rho \mathbf{U}\\ \rho \mathbf{U} \otimes \mathbf{U} + p \mathbb{I} \\ \rho H \mathbf{U}
\end{bmatrix}, 
\end{equation}
with $\rho$ the mass density, $\mathbf{U}=[U_1,\ldots,U_d]^T$ the velocity, $p$ the pressure, and $E = e + \mathbf{U}\cdot\mathbf{U}/2$ the specific total energy, where $e$ is the specific internal energy. The total specific enthalpy is $H = h + \mathbf{U}\cdot\mathbf{U}/2$, with $h = e + p/\rho$ the specific enthalpy. For simplicity, we adopt the classical perfect gas equation of state:
\begin{equation}\label{eq:EOS}
p = (\gamma-1)\rho e,
\end{equation}
with $\gamma$ the constant ratio of specific heats. Herein, the ratio $\gamma=1.4$ is assumed except when specified otherwise.

To perform convergence analysis, we employ the method of manufactured solutions, which requires the introduction of an additional source term in \cref{eq:euler0}. The modified system reads:

\begin{equation}\label{eq:euler1}
\partial_t \mathbf{q}+\nabla\cdot\mathbf{F}\left(\mathbf{q}\right) = \mathbf{s}(\mathbf{q}).
\end{equation}

This source term $\mathbf{s}$ will be taken into account in the discretizations presented in \cref{section:ALE-FV-DG}.

\section{Arbitrary high-order DG and FV methods on fixed and moving meshes}\label{section:ALE-FV-DG}

In this section, we present the numerical schemes used to solve the governing equations on fixed and moving meshes. We consider both Discontinuous Galerkin (DG) and Finite Volume (FV) methods, which are widely used for the numerical approximation of hyperbolic conservation laws. The main focus of this paper is on the treatment of high-order boundary conditions when the curved domain is discretized with linear meshes.
The novel boundary treatments, proposed in \cref{subsection:ROD1Dcorrection}, are applied to both DG and FV methods to impose high-order consistent Dirichlet-type and slip-wall boundary conditions even on moving domains, thus overcoming the second-order accuracy limitation of the simplicial elements. 

\subsection{Runge-Kutta Discontinuous Galerkin (RK-DG) method on fixed meshes}\label{sec:DG}

In this section, we recall the high-order RK-DG framework in the context of fixed domains.
The spatial domain $\Omega$ is discretized into $\Omega_h$ using a tessellation $\mathscr{T}$ composed of $\mathscr{N}$ non-overlapping simplicial elements (triangles in two dimensions, tetrahedra in three dimensions). A generic element is denoted by $T_i$, and we define $\Omega_h = \bigcup_{i=1}^{\mathscr{N}} T_i$. In general, $\Omega_h \neq \Omega$, and consequently $\partial\Omega_h \neq \partial\Omega$ for most approximations, including conformal ones, except for very simple geometries with no curvature or isogeometric approaches~\cite{nurbs0}.

The numerical solution $\mathbf{q}$ is approximated by $\mathbf{q}_h$, which belongs to a space of piecewise polynomials within each element $T_i$ and is discontinuous across element interfaces. Within each element $T_i$, we write
\begin{equation}\label{eq:DGbasis}
\mathbf{q}_h(\mathbf{x},t) = \sum_{k=1}^{D} \hat{\mathbf{q}}_{k}(t)\,\psi_{k}(\mathbf{x}),\qquad \mathbf{x}\in T_i. 
\end{equation}
Let $\mathbb{P}_p$ denote the space of polynomials of total degree at most $p$, and let $\{\psi_{k}\}_{k=1}^D$ be a basis of this space.
On simplicial elements, the number of degrees of freedom $D$ is given by $D = \prod_{\ell=1}^{d} (p+\ell)/\ell$. The representation~\eqref{eq:DGbasis} reduces to a first-order FV scheme when $p=0$.

\subsubsection{Discontinuous Galerkin discretization in space}

The elemental semi-discrete discontinuous Galerkin (DG) weak formulation is obtained by projecting each component of the governing equations onto the basis functions and integrating by parts~\cite{cockburn1998runge,bassi1997high}:
\begin{equation}\label{eq:DGweak semidiscrete 2}
\int_{T_i} \psi_j \, \frac{\mathrm{d} \mathbf{q}_h}{\mathrm{d} t} \, \mathrm{d}\mathbf{x} + \int_{\partial T_i} \psi_j \, \hat{F}(\mathbf{q}_h^-,\mathbf{q}_h^+,\mathbf{n}) \, \mathrm{d}S - \int_{T_i} \nabla\psi_j \cdot \mathbf{F}(\mathbf{q}_h) \, \mathrm{d}\mathbf{x} = \int_{T_i} \psi_j \, \mathbf{s}(\mathbf{q}_h) \, \mathrm{d}\mathbf{x}, 
\end{equation}
where $\hat{F}(\mathbf{q}_h^-,\mathbf{q}_h^+,\mathbf{n})$ is a consistent numerical flux that depends on the internal state $\mathbf{q}_h^-$, the neighbouring state $\mathbf{q}_h^+$, and the face normal $\mathbf{n}$. A consistent flux is a Lipschitz continuous function of its arguments and satisfies
\begin{equation}\label{eq:constistencyDG}
\hat{F}(\mathbf{q},\mathbf{q},\mathbf{n}) = \mathbf{F}(\mathbf{q})\cdot\mathbf{n}.
\end{equation}
In this work we employ a simple and robust Rusanov-type flux:
\begin{equation}\label{eq:rusanov flux}
\hat{F}(\mathbf{q}_h^-,\mathbf{q}_h^+,\mathbf{n}) = \frac{1}{2}\left(\mathbf{F}(\mathbf{q}_h^+) + \mathbf{F}(\mathbf{q}_h^-)\right)\cdot\mathbf{n} - \frac{1}{2}\lambda_{max}\left(\mathbf{q}_h^+ - \mathbf{q}_h^-\right),
\end{equation}
where $\lambda_{max}$ is the maximum eigenvalue of the normal flux Jacobians $A_{\mathbf{n}}(\mathbf{q}_h^+)$ and $A_{\mathbf{n}}(\mathbf{q}_h^-)$.

\subsubsection{Runge-Kutta time integration}

For fixed domain computations, we adopt a classical method of lines approach. Assembling all contributions from~\eqref{eq:DGweak semidiscrete 2}, we obtain in each element $T_i$ a system of ordinary differential equations~\cite{bassi1997high}:
\begin{equation}\label{eq:ODE semidiscrete}
\frac{\mathrm{d}\hat{\mathbf{q}}}{\mathrm{d}t} = M^{-1} \mathcal{K}(\hat{\mathbf{q}}),
\end{equation}
where the array $\hat{\mathbf{q}}$ contains the associated degrees of freedom, $\mathcal{K}$ is the array of size $D$ comprising the second, third, and fourth integrals in~\eqref{eq:DGweak semidiscrete 2}, and $M$ denotes the elemental mass matrix of dimension $D \times D$ given by
\begin{equation}\label{eq:massM}
[M]_{jk} = \int_{T_i} \psi_j \psi_k \, \mathrm{d}\mathbf{x}.
\end{equation}
We integrate~\eqref{eq:ODE semidiscrete} using classical Runge-Kutta methods~\cite{butcher2000numerical,gottlieb2001strong}.

\subsection{ADER-ALE Finite Volume (FV) method on moving meshes}\label{sec:FV}

In this section, we recall the ADER-ALE FV framework to simulate hyperbolic conservation laws in moving domains. 
The time-dependent computational domain $\Om^n =\Om_h(t^n)$ is discretized at the time level $t^n$  by non-overlapping simplicial elements $T_i^n$.
For conciseness, the description of the method focuses on 2D triangles, 
although the algorithm is also implemented and validated for 3D tetrahedrons. 
Additional details on the implementation are provided in \cite{boscheri2014direct}.
As also discussed in \cref{sec:DG}, in general, the moving tessellation does not coincide with the exact moving computational domain, except for very simple geometries with no curvature.

Unlike Eulerian approaches, where the mesh remains fixed, Lagrangian methods allow the computational grid to deform as the solution evolves in time. 
To simplify the development of the method, it is convenient to introduce a local reference coordinate system 
$\boldsymbol{\xi}=(\xi,\eta)$ with $(\xi,\eta)\in[0,1]$, where the reference element $T_e$ is defined. 
The reference element $T_e$ is a triangle with vertices located at $\boldsymbol{\xi}_{1,e} = (0,0)$, $\boldsymbol{\xi}_{2,e} = (1,0)$, and $\boldsymbol{\xi}_{3,e} = (0,1)$.
Each simplicial element $T^n_i$ at the current time $t_n$ is mapped from the reference 
system $(\xi, \eta)$ to the physical system $(x, y)$ by the following transformation: 
\begin{equation}\label{eq:referencesystem}
\begin{split}
  x &= X^n_{1,i} + (X^n_{2,i} - X^n_{1,i})\xi + (X^n_{3,i} - X^n_{1,i})\eta, \\
  y &= Y^n_{1,i} + (Y^n_{2,i} - Y^n_{1,i})\xi + (Y^n_{3,i} - Y^n_{1,i})\eta,
\end{split}
\end{equation}
where $\mathbf{X}^n_{k,i}=(X^n_{k,i},Y^n_{k,i})$ are the spatial coordinates of the $k$-th vertex 
of triangle $T^n_i$.   

In FV methods, the numerical solution within each element $T^n_i$ is described by spatial cell averages, which are defined by the following integral, approximated with consistent quadrature formulas:
\begin{equation}\label{eq:cellaverage}
  \mathbf{q}^n_i = \frac{1}{|T^n_i|} \int_{T^n_i} \mathbf{q}(\mathbf{x},t^n) \, \diff{\mathbf{x}},
\end{equation}
where $|T^n_i|$ is the volume of triangle $T^n_i$. 
High-order precision is obtained by reconstructing the solution at element interfaces with piecewise high-order polynomials 
\begin{equation}\label{eq:FVbasis}
\mathbf{w}_h(\mathbf{x},t^n) = \sum_{k=1}^{D} \hat{\mathbf{w}}_{k}(t^n)\,\psi_{k}(\mathbf{x}),\qquad \mathbf{x}\in T_i^n,
\end{equation}
computed from a stencil of neighbouring cell averages $\mathbf{q}^n_i$, and by computing volume and surface integrals using accurate quadrature formulas.
In this work, we employ only linear polynomial-based reconstructions (in the sense of Godunov's theorem) that achieve high-order accuracy on smooth flows. 
The choice of using a polynomial-based approach significantly facilitates the enforcement of the high-order boundary corrections discussed in~\cref{section:ROD}.
When working with flows involving shocks, the linear reconstruction can be easily replaced by a polynomial-based WENO approach~\cite{dumbser2007arbitrary,dumbser2007quadrature} to capture effectively the discontinuities occurring in the simulation.

\subsubsection{Local space-time predictor on moving meshes}\label{subsection:predictor}

The ADER approach is based on a local space-time predictor $\mathbf{p}_h(\mathbf{x},t)$ to achieve high-order time accuracy on deforming meshes. This predictor is constructed inside each element $T_i(t)$ over the time slab $[t^n,t^{n+1}]$ and does not rely on data from neighbouring cells. It is worth noting that, owing to the mesh motion, the space-time element exhibits different triangular geometries at the initial and final time instants $t^n$ and $t^{n+1}$. The computation of $\mathbf{p}_h(\mathbf{x},t)$ is carried out by advancing in time the polynomial $\mathbf{w}_h(\mathbf{x},t^n)$ through a weak space-time formulation of \cref{eq:euler0}. This approach originates from the work of \cite{dumbser2008unified} and was subsequently extended to moving meshes in \cite{dumbser2013arbitrary,boscheri2013arbitrary,boscheri2014direct,boscheri2015direct,gaburro2020high,gaburro2026treatment}. Therefore, we limit ourselves to presenting the essential steps, referring the reader to the aforementioned literature for a thorough description.

The space-time discretization is built upon a modified reference coordinate system that incorporates the temporal dimension. Specifically, we denote by $\overline{\overline{\mathbf x}}=(x,y,t)$ the physical coordinates and by $\boldsymbol{\overline{\overline \xi}}=(\xi,\eta,\tau)$ the corresponding reference coordinates, with $\tau \in [0,1]$. Within each space-time element $C_i^n =T_i(t)\times [t^n,t^{n+1}]$, the predictor is expressed in terms of local space-time basis functions $\theta(\overline{\overline{\mathbf x}})$ as
\begin{equation}\label{eq:ADERbasis}
\mathbf{p}_h(\mathbf{x},t) = \sum_{k=1}^Q \mathbf{\hat p}_{k} \theta_k(\overline{\overline{\mathbf x}}),\qquad \overline{\overline{\mathbf x}}\in C_i^n , \quad \text{with} \quad Q =\prod_{\ell=1}^{d+1} (p+\ell)/\ell.
\end{equation}
The same representation is adopted for the fluxes $\mathbf{F}=(\mathbf{f},\mathbf{g})$ and the source term $\mathbf{s}$, making use of the interpolation properties of the chosen nodal basis:
$$ \mathbf{f}_h(\mathbf{x},t) = \sum_{k=1}^Q \theta_k(\overline{\overline{\mathbf x}}) \mathbf{\hat f}_{k}, \quad 
\mathbf{g}_h(\mathbf{x},t)    = \sum_{k=1}^Q \theta_k(\overline{\overline{\mathbf x}}) \mathbf{\hat g}_{k}, \quad 
\mathbf{s}_h(\mathbf{x},t)    = \sum_{k=1}^Q \theta_k(\overline{\overline{\mathbf x}}) \mathbf{\hat s}_{k}. $$
Introducing the temporal mapping $t=t^n + \tau \Delta t$, the weak formulation is recast on the reference space-time element $T_e\times[0,1]$. The associated space-time Jacobian and its inverse are given by
\begin{equation}
  J = \frac{\pa \overline{\overline{\mathbf x}} }{\pa \boldsymbol{\overline{\overline{\xi}}}} = 
    \begin{bmatrix}
    x_\xi & x_\eta & x_\tau \\ y_\xi & y_\eta & y_\tau  \\ 0 & 0 & \Delta t
    \end{bmatrix}
    , \quad 
  J^{-1} = \frac{\pa \boldsymbol{\overline{\overline{\xi}}}}{\partial \overline{\overline{\mathbf x}}}  = 
    \begin{bmatrix}
    \xi_x & \xi_y & \xi_t \\ \eta_x & \eta_y & \eta_t  \\ 0 & 0 & \frac{1}{\Delta t}
    \end{bmatrix}
    .
\end{equation}
Making use of the local reference system and the space-time Jacobian $J$, the space-time formulation can be rewritten in the reference space as
\begin{equation}\label{eq:pdereferencespacetime}
\partial_\tau \mathbf{q} + \Delta t \left( \xi_t \, \partial_\xi \mathbf{q} + \eta_t \, \partial_\eta \mathbf{q} + \xi_x \, \partial_\xi \mathbf{f} + \eta_x \, \partial_\eta \mathbf{f} + \xi_y \, \partial_\xi \mathbf{g} + \eta_y \, \partial_\eta \mathbf{g} \right) = \Delta t \, \mathbf{s}(\mathbf{q}).
\end{equation}
For notational convenience, we define the inner product
$$\langle f,g \rangle = \int_0^1 \int_{T_e} f(\xi,\eta,\tau) g(\xi,\eta,\tau) \, \mathrm{d}\xi \, \mathrm{d}\eta \, \mathrm{d}\tau.$$
Substituting $\mathbf{p}_h$, $\mathbf{f}_h$, $\mathbf{g}_h$, and $\mathbf{s}_h$ into \cref{eq:pdereferencespacetime} and integrating over the space-time reference element $T_e\times[0,1]$ leads to the compact weak form 
\begin{equation}
\mathbf{K}_\tau \mathbf{\hat p}_{k} + \Delta t \left( \mathbf{K}_t \mathbf{\hat p}_{k} + \mathbf{K}_x \mathbf{\hat f}_{k} + \mathbf{K}_y \mathbf{\hat g}_{k}  \right) = \Delta t \mathbf{M}_t \mathbf{\hat s}_{k}, \label{eqn.pdeweak}
\end{equation}
wherein
\begin{equation}
\begin{split}
[\mathbf{K}_\tau]_{lk} = \big\langle \theta_l, \partial_\tau \theta_k \big\rangle, 
\quad [\mathbf{M}]_{lk} = \langle \theta_l, \theta_k \rangle, 
\quad [\mathbf{K}_t]_{lk} = \big\langle \theta_l, \xi_t \, \partial_\xi \theta_k \big\rangle + \big\langle \theta_l, \eta_t \, \partial_\eta \theta_k \big\rangle, \\ 
\quad [\mathbf{K}_x]_{lk} = \big\langle \theta_l, \xi_x \, \partial_\xi \theta_k \big\rangle + \big\langle \theta_l, \eta_x \, \partial_\eta \theta_k \big\rangle,
\quad [\mathbf{K}_y]_{lk} = \big\langle \theta_l, \xi_y \, \partial_\xi \theta_k \big\rangle + \big\langle \theta_l, \eta_y \, \partial_\eta \theta_k \big\rangle.
\end{split}
\end{equation}
It is worth pointing out that all the matrices above are computed on the reference space-time element and stored once during a preprocessing phase. Only the inverse space-time Jacobian $J^{-1}$ needs to be evaluated at each time step to account for the mesh movement. The coefficients $\mathbf{\hat p}_{k}$ of the predictor in each element $T_i(t)$ are determined independently per cell through an iterative procedure, owing to the nonlinearity of the system arising from \cref{eqn.pdeweak}. Further implementation details can be found in \cite{boscheri2013arbitrary}.

In addition, the motion of the mesh vertices must be accounted for. The trajectory of each vertex is governed by the equation
\begin{equation}\label{eq:meshspeed}
\frac{\mathrm{d}\mathbf{x}}{\mathrm{d}t} = \mathbf{V}(\mathbf{x},t),
\end{equation}
where $\mathbf{V}(\mathbf{x},t)$ denotes the local mesh velocity, which may differ from the fluid velocity. 
In this work, we specifically consider moving boundaries with a prescribed velocity. This may be viewed as a first step towards fluid-structure interaction applications.
Using the same space-time nodal expansion, the mesh position and velocity within each space-time element $T_i(t)$ are expressed as 
$$\mathbf{x}=\sum_{k=1}^Q \theta_k(\overline{\overline{\mathbf x}}) \mathbf{\hat x}_{k}\quad\text{ and }\quad 
\mathbf{V}_h=\sum_{k=1}^Q \theta_k(\overline{\overline{\mathbf x}}) \mathbf{\hat V}_{k}. $$

Following the approach of \cite{dumbser2013arbitrary}, \cref{eq:meshspeed} is solved in a space-time finite element sense:
$$\mathbf{K}_\tau \mathbf{\hat x}_{k} = \Delta t \mathbf{M} \mathbf{\hat V}_{k}.$$
This procedure yields a local space-time predictor for the nodal coordinates of the mesh.

To preserve the continuity of the discrete geometry, the mesh velocity must be uniquely defined at each vertex $\nu$. This requires a nodal solver, as the velocity is provided by the local predictor within all surrounding elements. Here, we compute a unique vertex velocity by averaging the contributions from all adjacent elements, yielding $\mathbf{\bar V}^n_\nu$. Finally, the vertex position is updated as $\mathbf{X}^{n+1}_\nu = \mathbf{X}^{n}_\nu + \Delta t \mathbf{\bar V}^n_\nu$.

\subsubsection{High order Finite Volume scheme}\label{subsection:FV}

We begin by rewriting \cref{eq:euler0} in the space-time divergence form introduced in \cite{boscheri2013arbitrary}:
\begin{equation}
\overline{\overline{\nabla}} \cdot \overline{\overline{\mathbf{Q}}} = \mathbf{s}(\mathbf{q}), \quad \text{where} \quad 
\overline{\overline{\nabla}} = \left(\frac{\partial}{\partial x},\frac{\partial}{\partial y},\frac{\partial}{\partial t}\right)^T, \quad 
\overline{\overline{\mathbf{Q}}} = \left( \mathbf{F}, \mathbf{q}\right) = \left( \mathbf{f}, \mathbf{g}, \mathbf{q}\right).  
\end{equation}
Integrating this expression over the space-time control volume $C_i^n = T_i(t)\times[t^n,t^{n+1}]$ and applying the Gauss theorem yields
\begin{equation}
\int_{\partial C_i^n} \overline{\overline{\mathbf{Q}}} \cdot \overline{\overline{\mathbf{n}}} \, \mathrm{d}S = \int_{C_i^n} \mathbf{s}(\mathbf{q}) \, \mathrm{d}\mathbf{x},
\end{equation}
where $\overline{\overline{\mathbf{n}}} = ( n_x, n_y, n_t)$ denotes the outward-oriented unit normal vector on the boundary $\partial C_i^n$ of the space-time element. The boundary $\partial C_i^n$ comprises the faces $T_i^n$, $T_i^{n+1}$, and the three lateral surfaces shared with adjacent triangles. Recalling that the space-time normals for $T_i^n$ and $T_i^{n+1}$ are $\overline{\overline{\mathbf{n}}} = (0,0,-1)$ and $\overline{\overline{\mathbf{n}}} = (0,0,1)$, respectively, the ALE one-step FV scheme can be expressed as
\begin{equation}\label{eq:alefv}
|T_i^{n+1}| \mathbf{q}_i^{n+1} = |T_i^{n}| \mathbf{q}_i^{n} - \sum_{j\in\mathcal N_i}\int_{S_{ij}^n} \overline{\overline{\mathbf{G}}}_{ij}(\mathbf{p}_h^-,\mathbf{p}_h^+, \overline{\overline{\mathbf{n}}}_{ij}) \, \mathrm{d}S + \int_{C_i^n} \mathbf{s}(\mathbf{p}_h)\, \mathrm{d}\mathbf{x} ,
\end{equation}
where $\bigcup_{j\in\mathcal N_i} S^n_{ij} = \partial C_i^n\setminus (T_i^{n} \cup T_i^{n+1})$, and $\mathcal N_i$ represents the set of neighbouring elements sharing a face with triangle $T_i(t)$.

The integrand appearing in the surface integral of \cref{eq:alefv} corresponds to the ALE numerical flux across the space-time interface $S_{ij}^n$. In the present work, we employ the Osher-type ALE flux originally proposed in the Eulerian framework \cite{dumbser2011universal} and subsequently extended to the ALE setting in \cite{dumbser2013arbitrary}:
\begin{equation}\label{eq:osherale}
  \overline{\overline{\mathbf{G}}} (\mathbf{p}^-,\mathbf{p}^+,\overline{\overline{\mathbf{n}}}) = \frac12 \left( \overline{\overline{\mathbf{Q}}}(\mathbf{p}^+) + \overline{\overline{\mathbf{Q}}}(\mathbf{p}^-) \right)\cdot \overline{\overline{\mathbf{n}}}  - \frac12 \left(\int_0^1 |\mathbf{A}_{\mathbf{n}}^{\mathbf{V}}(\boldsymbol{\Psi}(s))| \,\mathrm{d}s\right) \left(\mathbf{p}^+-\mathbf{p}^-\right) ,
\end{equation}
with the straight-line segment connecting the left and right states given by $\boldsymbol{\Psi}(s) = \mathbf{p}^- + s \left(\mathbf{p}^+-\mathbf{p}^-\right)$, $s\in[0,1]$. The matrix $\mathbf{A}_{\mathbf{n}}^{\mathbf{V}}$ denotes the ALE Jacobian projected in the spatial normal direction, defined as
\begin{equation}
  \mathbf{A}_{\mathbf{n}}^{\mathbf{V}}(\mathbf{q}) = \frac{\partial(\mathbf{F}\cdot\mathbf{n})}{\partial\mathbf{q}} - (\mathbf{V}\cdot\mathbf{n})\mathbf{I}, 
  \quad \text{where}\quad \mathbf{n} = \frac{( n_x, n_y)^T}{\sqrt{ n_x^2+ n_y^2}}.
\end{equation}
In \cref{eq:osherale}, the absolute value of the matrix is understood in the standard sense:
$$|\mathbf{A}| = \mathbf{R}|\boldsymbol{\Lambda}|\mathbf{R}^{-1}, \quad\text{where}\quad |\boldsymbol{\Lambda}| = \text{diag}(|\lambda_1|,\ldots,|\lambda_{d+2}|),$$
with $\mathbf{R}$ being the right eigenvector matrix. We note that the path integral in \cref{eq:osherale} is evaluated numerically using Gaussian quadrature.

\subsubsection{Mesh motion}\label{subsection:meshmotion}

In the present work, the computational domain is assumed to move according to a boundary velocity that is prescribed a priori. This is a standard assumption frequently adopted in the context of fluid-structure interaction applications. To consistently extend the motion from the Lagrangian boundary to the interior fluid mesh, a variety of strategies are available in the literature \cite{lohner1996improved,helenbrook2003mesh,dwight2009robust,stein2003mesh}. Among the simplest and most widely used techniques is the harmonic equation, which amounts to solving the following elliptic problem for the mesh velocity:
\begin{equation}\label{eq:laplace}
  \begin{cases}
  -\Delta \mathbf{V}(\mathbf{x},t) = 0, \quad &\text{in}\quad \Omega, \quad \\
  \mathbf{V}(\mathbf{x},t) = \mathbf{V}^{b}(\mathbf{x},t), \quad &\text{on}\quad \partial\Omega,
  \end{cases}
\end{equation}
where $\mathbf{V}^{b}(\mathbf{x},t)$ denotes the prescribed velocity imposed on the boundary $\partial\Omega$. In this work, \cref{eq:laplace} is discretised using a standard $\mathbb{P}_1$ finite element method on simplicial elements.

\subsection{Boundary conditions in the RK-DG and ADER-ALE FV frameworks}

The imposition of boundary conditions requires a framework-dependent treatment, although both approaches share the common idea of introducing a suitable ghost state.

In the RK-DG setting, whenever the element boundary $\partial T_i$ lies on the computational boundary $\partial\Omega_h$, the semi-discrete numerical flux is evaluated by defining a ghost state $\mathbf{v}_h$ and computing the boundary flux as $\hat F(\mathbf{q}_h^-,\mathbf{v}_h,\mathbf{n})$.

For space-time schemes, such as the ADER-ALE FV framework, the integration domain is the space-time element $T_i(t)\times[t^n,t^{n+1}]$. If $\partial T_i^n$ belongs to $\partial\Omega^n$, the space-time flux $\overline{\overline{\mathbf{G}}}_{ij}$ must be consistent with the boundary conditions. In this case, the neighbouring reconstructed solution is replaced by the ghost state $\mathbf{v}_h$, and the numerical flux on the space-time surfaces $S_{ij}^n$ is given by $\overline{\overline{\mathbf{G}}}_{ij}(\mathbf{p}_h^-,\mathbf{v}_h,\overline{\overline{\mathbf{n}}})$.

In this work, we focus on two types of boundary conditions.
\begin{enumerate}
  \item Weak enforcement of Dirichlet boundary conditions is achieved through the numerical fluxes, where the full ghost state $\mathbf{v}_h$ is assigned the given boundary values.
  \item The slip-wall boundary condition with non-zero speed $\mathbf{W}$ can be enforced by matching the normal component of the velocity vector
\begin{equation}\label{eq:slipwall}
\mathbf{U}\cdot\mathbf{n}=\mathbf{W}\cdot\mathbf{n} \quad\text{ on }\quad \pa\Om.
\end{equation}
In order to impose the consistent boundary condition, a high-order approximation of $\mbox{$(\mathbf{U}\cdot\mathbf{n})^\star|_{\pa\Om_h}$}$ must be computed, as explained in \cref{section:ROD}. The slip-wall condition can be reinterpreted as a Dirichlet condition on the normal velocity component.
The remaining thermodynamic variables and the tangential component of the velocity are usually taken equal to those of the internal reconstructed state, denoted with the superscript $^-$. Therefore the ghost state must yield a velocity field of the form $\mbox{$(\mathbf{U}\cdot\mathbf{n})^\star \mathbf{n} + (\mathbf{U}^-\cdot\mathbf{t}) \mathbf{t}$}$, where $\mathbf{t}$ is the tangent vector to the physical boundary. 
\end{enumerate}

\section{High-order boundary conditions on fixed and moving linear meshes}\label{section:ROD}

In this work, the goal is to avoid curvilinear mesh generation and use standard conformal linear meshes to simulate compressible flow problems on fixed and moving domains. For both DG and FV schemes, the geometry discretization of the physical domain with piecewise affine approximation introduces a second-order geometrical error that spoils the global high-order DG or FV approximation. In the following, we introduce the notation used for the physical and computational boundaries. As mentioned above, $\Om_h$ is the linear conformal mesh discretizing the physical domain $\Om$. Therefore, $\pa\Om_h$ is the polygonal approximation of $\pa\Om$. Each point on the computational boundary $\pa\Om_h$ is mapped onto the physical boundary $\pa\Om$ in a unique way:
\begin{equation} 
\forall \tilde{\mathbf{x}}\in\pa\Om_h,\quad \exists \bar{\mathbf{x}}\in\pa\Om \quad \text{s.t.}\quad \bar{\mathbf{x}}=\mathcal{M}(\tilde{\mathbf{x}}),
\end{equation} 
where $\mathcal{M}$ represents the mapping between the two boundaries. Several approaches can be used to define such mappings. In this work, we define a mapping based on a signed distance function built using the exact boundary normals $\mathbf{n}$ of the physical domain: 
$$  \bar{\mathbf{x}} =  \tilde{\mathbf{x}} +  d(\tilde{\mathbf{x}})\mathbf{n}, \quad \text{where}\quad d(\tilde{\mathbf{x}})=\|\mathbf{d}(\tilde{\mathbf{x}})\|=\|\bar{\mathbf{x}} - \tilde{\mathbf{x}}\|. $$
See \cref{fig:mapping2D} for a visual representation. 

\begin{figure}
\centering
\subfigure{
\resizebox{0.35\textwidth}{!}{%
\begin{tikzpicture}[
    >={Stealth[length=2.5mm, width=2mm]},
    punto/.style={fill, circle, inner sep=1.5pt},
    freccia/.style={->, thick, blue!70!black}
]
    % Quadrato (dominio)
    \draw[thick] (-3,-3) rectangle (3,3);
    \node[red] at (-1.85,-2.1) {$\Omega$: physical};
    \node[green] at (-1.2,-2.6) {$\Omega_h$: computational};
    
    % Cerchio al centro
    \draw[red, thick] (0,0) circle (2);

    % zoom
    \draw[black, dashed] (1.1,1.4) circle (1);
    \draw[->] (2.2, 1.6) to (4.5,1.8);

    % Decagono regolare (10 lati) che approssima il cerchio
    \def\N{7}  % numero di lati
    \def\R{2}   % raggio

    \draw[green, ultra thick] 
        ({ \R*cos(360*1/\N - 90 + 90/\N) }, 
        { \R*sin(360*1/\N - 90 + 90/\N) })
        \foreach \i in {2,...,\N} {
            -- ({ \R*cos(360*\i/\N - 90 + 90/\N) }, 
                { \R*sin(360*\i/\N - 90 + 90/\N) })
        } -- cycle;
        
\end{tikzpicture}}
}
\subfigure{
\resizebox{0.3\textwidth}{!}{%
\begin{tikzpicture}[
    >={Stealth[length=2.5mm, width=2mm]},
    punto/.style={fill, circle, inner sep=1.5pt},
    freccia/.style={->, thick, blue!70!black}
]

% ===== DOMINIO DESTRO: triangle =====
\draw[thick] (10,-3) -- (10,3) -- (7,0) -- cycle;
\draw[ultra thick,green] (10,-3) -- (10,3);

% Punti sul lato sinistro del quadrato (\tilde{x}_1, \tilde{x}_2, \tilde{x}_3)
% Lato sinistro: x=5, y da -2 a 2
\filldraw[black] (10, -1.5) circle (1.5pt) node[left] {$\tilde{\mathbf{x}}_1$};
\filldraw[black] (10, 0) circle (1.5pt) node[left] {$\tilde{\mathbf{x}}_2$};
\filldraw[black] (10, 1.5) circle (1.5pt) node[left] {$\tilde{\mathbf{x}}_3$};

% ===== DOMINIO SINISTRO: Cerchio/Semicerchio =====
% Arco di cerchio (quartiere di cerchio)
\draw[ thick,red] (10,-3) arc[start angle=-30, end angle=30, radius=6];

\draw[freccia] (10, -1.5) to (10.53,-1.8);
\draw[freccia] (10, 0) to (10.8, 0);
\draw[freccia] (10, 1.5) to (10.53,1.8);

% zoom
\draw[black, dashed] (9,0) circle (3.5);

% Punti sull'arco mappati (\bar{x}_1, \bar{x}_2, \bar{x}_3)
\filldraw[red] (10.53,-1.8) circle (2pt) node[below right] {$\bar{\mathbf{x}}_1$};
\filldraw[red] (10.8,0) circle (2pt) node[above right] {$\bar{\mathbf{x}}_2$};
\filldraw[red] (10.53,1.8) circle (2pt) node[above right] {$\bar{\mathbf{x}}_3$};

\end{tikzpicture}}
}
\caption{The mapping $\mathcal{M}$ (in blue) between the computational boundary (in green) and the physical curved boundary (in red) for 2D linear meshes.}\label{fig:mapping2D}
\end{figure}

\subsection{Literature of the Reconstruction for Off-site Data method}

The Reconstruction for Off-site Data (ROD) method is a high-order boundary treatment based on a constrained minimization problem. It was firstly introduced in the context of FV schemes to preserve the high-order accuracy when discretizing curved domains with polygonal meshes \cite{costa2018very}. 
It has been used to solve a variety of physical models \cite{costa2019very,costa2021efficient,costa2022very,costa2023imposing}. 
In more recent works, it was developed in the context of DG and ADER-DG methods \cite{santos2024very,ciallella2024very}. 
In a nutshell, the ROD method constructs a modified polynomial $v_h$ on the boundary cell that exactly satisfies the boundary conditions. This polynomial is obtained by minimizing a suitable distance with respect to either the local high-order polynomial within the cell (in the DG framework) or the high-order reconstruction from a sufficiently large stencil of cell averages (in the FV framework).

Following the aforementioned notation, we refer to $\mathbf{q}_h$, when we discuss the DG approximation in space \eqref{eq:DGbasis}, and $\mathbf{w}_h$, when we discuss about the FV reconstruction \eqref{eq:FVbasis}. As introduced above, $\mathbf{p}_h$ refers to the space-time predictor in the context of ADER methods \eqref{eq:ADERbasis}, which is defined in the same way for DG or FV schemes.
In the original ROD for FV methods, the constrained minimization problem was defined to build a special reconstructed polynomial at the boundary that is as close as possible to $\mathbf{w}_h$ and is aware of the boundary conditions imposed on the physical geometry \cite{costa2018very}. In practice, the ROD method for FV becomes a least-square approach with constraints. 
To impose exactly the given boundary condition, Lagrange multipliers enforcing the condition are introduced in the functional to be minimized. The final ROD polynomial is obtained from the solution of a linear system including the standard FV reconstruction based on a stencil of cell averages, and the boundary constraints. 
When using a DG discretization in space, the ROD method simplifies significantly with respect to the FV version.
Indeed, thanks to the locality of the DG polynomial $\mathbf{q}_h$ in space, it is no longer necessary to use large stencils; the modified ROD polynomial can be directly computed by minimizing its coefficients with the ones of the internal polynomial of the boundary cell, and imposing exactly the boundary conditions through Lagrange multipliers.
This allows to reduce the main block of the linear system to the identity matrix \cite{santos2024very}, making it easier to invert. 
In the context of space-time ADER-DG method, a similar formulation in terms of the space-time predictor $\mathbf{p}_h$ was developed to obtain a single ROD polynomial in the whole space-time element that is aware of the boundary conditions on the space-time boundary surfaces \cite{ciallella2024very}.

\subsection{The standard ROD method for DG in multiple dimensions}

In this section, for simplicity, we briefly recall the ROD method originated for high-order DG schemes. 
Let us consider that the ROD method is applied on a generic scalar variable $u_h$.   
The main idea is to build a modified polynomial $v_h$ that is as close as possible to the internal polynomial $u_h$, while satisfying the boundary conditions on the real boundary. 
We consider here to build a modified polynomial $v_h$ that uses the same polynomial basis as $u_h$ and has the same number of degrees of freedom $D$:
\begin{equation}
u_h(\mathbf{x}) = \sum_{k=1}^D \psi_k(\mathbf{x}) \hat{\mathbf{u}}_k =  \boldsymbol{\varphi}^T(\mathbf{x}) \hat{\mathbf{u}}, \qquad 
v_h(\mathbf{x}) = \sum_{k=1}^D \psi_k(\mathbf{x}) \hat{\mathbf{v}}_k =  \boldsymbol{\varphi}^T(\mathbf{x}) \hat{\mathbf{v}}, 
\end{equation}
where $\boldsymbol{\varphi}=[\psi_1,\ldots,\psi_D]^T$. 

% \begin{remark}[ROD for FV]
% In the original ROD method for FV \cite{costa2018very}, a single FV reconstruction was performed to build the ROD polynomial, thus obtaining a least-square problem with constraints.
% However, the minimization problem discussed in this section, although it was introduced for DG, can be applied as a black box to FV methods as well, where the internal high-order polynomial $u_h$ used in the minimization problem is replaced by the FV reconstruction \eqref{eq:FVbasis}, which was built in advance without considering the boundary conditions.
% In this case, the FV reconstruction aware of the boundary conditions is built in two steps. 
% \end{remark}

Following the same notation, the coefficients $\hat{\mathbf{v}}$ are found by minimizing the distance to the internal polynomial coefficients $\hat{\mathbf{u}}$ subject to a set of Dirichlet boundary constraints $v_h(\bar{\mathbf{x}}_k)=u_{\D}(\bar{\mathbf{x}}_k)$, with $k=1,\ldots,K$. Let us take $\mathbf{u}_{\D}=[u_{\D}(\bar{\mathbf{x}}_1),\ldots,u_{\D}(\bar{\mathbf{x}}_K)]^T$, where $K$ is the number of boundary constraints in each boundary cell. In general, to preserve accuracy, the number of constraints is taken equal to the number of quadrature points used to compute the boundary integral. The ROD method consists in solving a minimization problem for the following functional:
\begin{equation}
\mathcal{J}(\hat{\mathbf{v}}) = \frac{1}{2} \|\hat{\mathbf{v}} - \hat{\mathbf{u}}\|_2^2 + \sum_{k=1}^K \lambda_k (v_h(\bar{\mathbf{x}}_k) - u_{\D}(\bar{\mathbf{x}}_k)),
\end{equation}
where the Euclidean norm is used to measure the distance between the two polynomials, and $\boldsymbol{\lambda}=[\lambda_1,\ldots,\lambda_K]^T$ is the vector of Lagrange multipliers. The optimality conditions for this problem give:
\begin{equation}
  \frac{\partial \mathcal{J}}{\partial \hat{\mathbf{v}}} = \hat{\mathbf{v}} - \hat{\mathbf{u}} + \Phi(\bar{\mathbf{x}}) \boldsymbol{\lambda} = 0, \qquad
  \frac{\partial \mathcal{J}}{\partial \boldsymbol{\lambda}} = \Phi^T(\bar{\mathbf{x}}) \hat{\mathbf{v}} - \mathbf{u}_{\D} = 0, 
\end{equation}
where $[\Phi(\bar{\mathbf{x}})]_{jk}=\varphi_j(\bar{\mathbf{x}}_k)$ is of full column rank.
Thus, the following linear system is solved to obtain the coefficients $\hat{\mathbf{v}}$ of the modified polynomial:
\begin{equation}\label{eq:RODsystem}
\begin{bmatrix}\mathbf{I} & \Phi(\bar{\mathbf{x}}) \\
\Phi^T(\bar{\mathbf{x}}) & \mathbf{0} \end{bmatrix}
\begin{bmatrix}\hat{\mathbf{v}} \\ \boldsymbol{\lambda} \end{bmatrix} =
\begin{bmatrix}\hat{\mathbf{u}} \\ \mathbf{u}_{\D} \end{bmatrix}.
\end{equation}  
Then the coefficients $\hat{\mathbf{v}}$ are employed to compute the modified boundary conditions in the corresponding quadrature points on the discretized boundary $\{\tilde{\mathbf{x}}_k\}_{k=1}^K$ (see \cref{fig:mapping2D} for a visual representation), with the scalar product $v_h(\tilde{\mathbf{x}}_k) = \boldsymbol{\varphi}^T(\tilde{\mathbf{x}}_k) \hat{\mathbf{v}}$. To impose the high-order boundary conditions on the computational boundary, the corrected value $v_h(\tilde{\mathbf{x}}_k)$ is computed for each conservative variable, and is then used as a ghost state in the boundary numerical flux.

The system \eqref{eq:RODsystem} can be solved by using the Schur complement, which reduces the problem to solving
\begin{equation}
\hat{\mathbf{v}} = \hat{\mathbf{u}} - \Phi(\bar{\mathbf{x}}) \boldsymbol{\lambda}, \qquad
\boldsymbol{\lambda} = (\Phi^T(\bar{\mathbf{x}}) \Phi(\bar{\mathbf{x}}))^{-1} (\Phi^T(\bar{\mathbf{x}}) \hat{\mathbf{u}} - \mathbf{u}_{\D}).
\end{equation}
To solve this linear system, we need to invert a matrix of size $K\times K$. Thus, the computational complexity of this approach is $\mathcal{O}(K^3+K^2D + KD)$.

\begin{remark}[Extension to moving meshes]
Although, for fixed meshes, the inversion of the matrix $\Phi^T(\bar{\mathbf{x}}) \Phi(\bar{\mathbf{x}})$ can be precomputed and stored, for moving meshes, the matrix changes at each time step, and thus it needs to be inverted at each time step for each boundary cell.  
\end{remark}

With the goal of reducing significantly the computational costs related to this high-order boundary correction, in the following section, we show how to write the ROD method as a simple polynomial correction that no longer needs the assembly and inversion of $\Phi^T(\bar{\mathbf{x}}) \Phi(\bar{\mathbf{x}})$. Moreover, we will show that the same concept can be applied as a black box in several frameworks (DG, FV, space-time one-step ADER).

\subsection{ROD method as a 1D polynomial correction}\label{subsection:ROD1Dcorrection}

In this work, we propose a new interpretation of the ROD method as a 1D-like polynomial correction along the mapping direction that links a point on the computational boundary $\tilde{\mathbf{x}}$ to the one on the physical boundary $\bar{\mathbf{x}}$. This approach allows us to extend the ROD method to moving meshes without the need to invert the aforementioned $K\times K$ matrix at each time step, thus significantly reducing the computational cost. 
The problem arises from the use of multiple boundary constraints in the minimization problem required to preserve consistency \cite{santos2024very}, which makes the inversion of $\Phi^T(\bar{\mathbf{x}}) \Phi(\bar{\mathbf{x}})$ unavoidable.

Therefore, the main idea is to build the modified polynomial $v_h$ by minimizing the distance to the internal polynomial $u_h$ along the mapping direction to the boundary, while satisfying the corresponding boundary condition on the real boundary. In this way, the minimization problem only involves one constraint for each quadrature point on the boundary, and thus it can be solved analytically without the need to invert any matrix. We also show that this approach can be recast as an elegant polynomial correction, in the spirit of the shifted boundary polynomial correction introduced in \cite{ciallella2023shifted,ciallella2025stability}, to simplify the development of the high-order shifted boundary method \cite{Scovazzi1}. This new formulation highly simplifies the development of minimization-based boundary corrections by making them straightforward to be introduced in several computational frameworks. 

In particular, following the one-dimensional analysis carried out in \cite{ciallella2025minimization}, we show here that the proof to achieve this formulation can be extended straightforwardly in the context of FV, DG and space-time ADER methods with fixed and moving multi-dimensional domains, and obtain a general methodology. 

In particular, we focus on two different minimization-based approaches which rely on different convex distance functions: the Euclidean norm and the $L^2$-norm. 
We introduce two new methods, denoted as ROD-E and ROD-$L^2$, respectively. Unlike the original ROD method, which computes a single modified polynomial per boundary cell, these methods rely on a different minimization formulation that yields a modified polynomial at each quadrature point. 
In one spatial dimension, ROD-E is equivalent to a reformulation of the original ROD method. In multiple dimensions, however, the new formulations differ from the original one, which needed the inversion of the constraint matrix.

\subsubsection{Euclidean norm as distance measure}

Let us consider to write the ROD linear system \eqref{eq:RODsystem} with only one boundary constraint. Rather than building one ROD polynomial for the whole boundary element, we propose to build as many ROD polynomials as the points $\tilde{\mathbf{x}}$ on the computational boundary. For each one of the ROD polynomials, we enforce only the boundary condition at the corresponding point $\bar{\mathbf{x}}$ on the physical boundary (see \cref{fig:mapping2D}).

When using the Euclidean distance, the cost functional for the ROD-E method is given by
\begin{equation}
\mathcal{J}(\hat{\mathbf{v}},\lambda) = \frac{1}{2} \|\hat{\mathbf{v}} - \hat{\mathbf{u}}\|^2_2 + \lambda (v_h(\bar{\mathbf{x}}) - u_{\D}(\bar{\mathbf{x}})).
\end{equation}
The optimality conditions for this problem can be written as
\begin{equation}
\frac{\partial \mathcal{J}}{\partial \hat{\mathbf{v}}} = \hat{\mathbf{v}} - \hat{\mathbf{u}} + \lambda \boldsymbol{\varphi}(\bar{\mathbf{x}}) = 0, \qquad
\frac{\partial \mathcal{J}}{\partial \lambda} = \boldsymbol{\varphi}^T(\bar{\mathbf{x}})\hat{\mathbf{v}} - u_{\D}(\bar{\mathbf{x}}) = 0.
\end{equation}
The linear system in this case is simply given by
\begin{equation}\begin{bmatrix}\mathbf{I} & \boldsymbol{\varphi}(\bar{\mathbf{x}}) \\
\boldsymbol{\varphi}^T(\bar{\mathbf{x}}) & 0 \end{bmatrix}
\begin{bmatrix}\hat{\mathbf{v}} \\ \lambda \end{bmatrix} =
\begin{bmatrix}\hat{\mathbf{u}} \\ u_{\D}(\bar{\mathbf{x}}) \end{bmatrix}.
\end{equation}
Once the coefficients $\hat{\mathbf{v}}$ of the modified polynomial are obtained, the boundary conditions at the point $\tilde{\mathbf{x}}$ on the discretized boundary can be computed as $v_h(\tilde{\mathbf{x}}) = \boldsymbol{\varphi}^T(\tilde{\mathbf{x}}) \hat{\mathbf{v}}$.

\begin{proposition}
  Imposing boundary conditions with the single-constraint ROD-E method on the discretized boundary $\tilde{\mathbf{x}}$ is equivalent to the following polynomial correction:
\begin{equation}\label{eq:RODE}
v_h(\tilde{\mathbf{x}}) = [\boldsymbol{\varphi}(\tilde{\mathbf{x}}) - \alpha^{\text{ROD-E}} \boldsymbol{\varphi}(\bar{\mathbf{x}})]^T \hat{\mathbf{u}} + \alpha^{\text{ROD-E}} u_{\D}(\bar{\mathbf{x}}),
\end{equation}
where 
\begin{equation}
\alpha^{\text{ROD-E}} = \frac{\boldsymbol{\varphi}^T(\tilde{\mathbf{x}}) \boldsymbol{\varphi}^T(\bar{\mathbf{x}})}{\boldsymbol{\varphi}^T(\bar{\mathbf{x}}) \boldsymbol{\varphi}(\bar{\mathbf{x}})} = \frac{\boldsymbol{\varphi}^T(\tilde{\mathbf{x}}) \boldsymbol{\varphi}(\bar{\mathbf{x}})}{\|\boldsymbol{\varphi}(\bar{\mathbf{x}})\|_2^2}.
\end{equation}
\end{proposition}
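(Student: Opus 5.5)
The plan is to solve the single-constraint saddle-point system explicitly by eliminating $\hat{\mathbf{v}}$, exactly as in the Schur-complement treatment of \eqref{eq:RODsystem} but with $K=1$. The key point is that with one constraint the Schur complement is the scalar $\boldsymbol{\varphi}^T(\bar{\mathbf{x}})\boldsymbol{\varphi}(\bar{\mathbf{x}}) = \|\boldsymbol{\varphi}(\bar{\mathbf{x}})\|_2^2$. No matrix inversion is therefore needed.

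From the first optimality condition we obtain
\[
\hat{\mathbf{v}} = \hat{\mathbf{u}} - \lambda\,\boldsymbol{\varphi}(\bar{\mathbf{x}}).
\]
Substituting this into the constraint $\boldsymbol{\varphi}^T(\bar{\mathbf{x}})\hat{\mathbf{v}} = u_{\D}(\bar{\mathbf{x}})$ gives
\[
\boldsymbol{\varphi}^T(\bar{\mathbf{x}})\hat{\mathbf{u}} - \lambda\|\boldsymbol{\varphi}(\bar{\mathbf{x}})\|_2^2 = u_{\D}(\bar{\mathbf{x}}),
\]
hence
\[
\lambda = \frac{\boldsymbol{\varphi}^T(\bar{\mathbf{x}})\hat{\mathbf{u}} - u_{\D}(\bar{\mathbf{x}})}{\|\boldsymbol{\varphi}(\bar{\mathbf{x}})\|_2^2}.
\]

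The only step needing justification is that this denominator is nonzero. I would argue it through the polynomial space: $\mathbb{P}_p$ contains the constants, so some combination of the $\psi_k$ equals $1$ everywhere. Therefore $\boldsymbol{\varphi}(\bar{\mathbf{x}})$ cannot be the zero vector at any point, including points outside $T_i$ where the basis is evaluated by polynomial extrapolation. This is the single-constraint analogue of the full-rank assumption made on $\Phi(\bar{\mathbf{x}})$, and it also shows that the saddle-point matrix is invertible, so the minimizer is unique.

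Finally, I evaluate the modified polynomial at the computational point:
\[
v_h(\tilde{\mathbf{x}}) = \boldsymbol{\varphi}^T(\tilde{\mathbf{x}})\hat{\mathbf{v}} = \boldsymbol{\varphi}^T(\tilde{\mathbf{x}})\hat{\mathbf{u}} - \lambda\,\boldsymbol{\varphi}^T(\tilde{\mathbf{x}})\boldsymbol{\varphi}(\bar{\mathbf{x}}).
\]
Inserting $\lambda$ and setting $\alpha^{\text{ROD-E}} = \boldsymbol{\varphi}^T(\tilde{\mathbf{x}})\boldsymbol{\varphi}(\bar{\mathbf{x}})/\|\boldsymbol{\varphi}(\bar{\mathbf{x}})\|_2^2$, the terms regroup as
\[
\bigl[\boldsymbol{\varphi}(\tilde{\mathbf{x}}) - \alpha^{\text{ROD-E}}\boldsymbol{\varphi}(\bar{\mathbf{x}})\bigr]^T\hat{\mathbf{u}} + \alpha^{\text{ROD-E}}u_{\D}(\bar{\mathbf{x}}),
\]
which is \eqref{eq:RODE}. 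This uses only the symmetry of the scalar product $\boldsymbol{\varphi}^T(\tilde{\mathbf{x}})\boldsymbol{\varphi}(\bar{\mathbf{x}})$.

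I expect no real obstacle. The only points needing care are the nondegeneracy of $\|\boldsymbol{\varphi}(\bar{\mathbf{x}})\|_2$ and the remark that the first expression for $\alpha^{\text{ROD-E}}$ in the statement, with a transpose on both factors, should be read as the scalar product $\boldsymbol{\varphi}^T(\tilde{\mathbf{x}})\boldsymbol{\varphi}(\bar{\mathbf{x}})$. The same argument applies verbatim when $\hat{\mathbf{u}}$ comes from an FV reconstruction or an ADER predictor.
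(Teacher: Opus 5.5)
Your proof is correct and follows essentially the same route as the paper: you eliminate $\hat{\mathbf{v}}$ through the first optimality condition, solve the scalar equation for $\lambda$, then evaluate $\boldsymbol{\varphi}^T(\tilde{\mathbf{x}})\hat{\mathbf{v}}$ and regroup. Two of your additions fill in details the paper leaves implicit: the argument that $\|\boldsymbol{\varphi}(\bar{\mathbf{x}})\|_2 \neq 0$ because $\mathbb{P}_p$ contains the constants, and your reading of the double-transposed numerator as the scalar product $\boldsymbol{\varphi}^T(\tilde{\mathbf{x}})\boldsymbol{\varphi}(\bar{\mathbf{x}})$.
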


\begin{proof}
The polynomial correction can be retrieved by solving the minimization problem. From the first optimality condition, we have 
$$ \hat{\mathbf{v}} = \hat{\mathbf{u}} - \lambda \boldsymbol{\varphi}(\bar{\mathbf{x}}) .$$
By replacing this expression in the second optimality condition, we get
$$ \lambda = \frac{\boldsymbol{\varphi}^T(\bar{\mathbf{x}}) \hat{\mathbf{u}} - u_{\D}(\bar{\mathbf{x}})}{\boldsymbol{\varphi}^T(\bar{\mathbf{x}}) \boldsymbol{\varphi}(\bar{\mathbf{x}})} .$$
Finally, the modified degrees of freedom $\hat{\mathbf{v}}$ can be written as
$$ \hat{\mathbf{v}} = \hat{\mathbf{u}} - \frac{\boldsymbol{\varphi}^T(\bar{\mathbf{x}}) \hat{\mathbf{u}} - u_{\D}(\bar{\mathbf{x}})}{\boldsymbol{\varphi}^T(\bar{\mathbf{x}}) \boldsymbol{\varphi}(\bar{\mathbf{x}})} \boldsymbol{\varphi}(\bar{\mathbf{x}}) .$$
The polynomial correction can be obtained by evaluating the modified polynomial in the corresponding point on the computational boundary $\tilde{\mathbf{x}}$, which gives
\begin{align*}
v_h(\tilde{\mathbf{x}}) &= \boldsymbol{\varphi}^T(\tilde{\mathbf{x}}) \hat{\mathbf{v}} = \boldsymbol{\varphi}^T(\tilde{\mathbf{x}}) \hat{\mathbf{u}} - \frac{\boldsymbol{\varphi}^T(\bar{\mathbf{x}}) \hat{\mathbf{u}} - u_{\D}(\bar{\mathbf{x}})}{\boldsymbol{\varphi}^T(\bar{\mathbf{x}}) \boldsymbol{\varphi}(\bar{\mathbf{x}})} \boldsymbol{\varphi}^T(\tilde{\mathbf{x}}) \boldsymbol{\varphi}(\bar{\mathbf{x}}) \\
&= \boldsymbol{\varphi}^T(\tilde{\mathbf{x}}) \hat{\mathbf{u}} - \frac{\boldsymbol{\varphi}^T(\bar{\mathbf{x}}) \hat{\mathbf{u}}}{\boldsymbol{\varphi}^T(\bar{\mathbf{x}}) \boldsymbol{\varphi}(\bar{\mathbf{x}})} \boldsymbol{\varphi}^T(\tilde{\mathbf{x}}) \boldsymbol{\varphi}(\bar{\mathbf{x}}) + \frac{u_{\D}(\bar{\mathbf{x}})}{\boldsymbol{\varphi}^T(\bar{\mathbf{x}}) \boldsymbol{\varphi}(\bar{\mathbf{x}})} \boldsymbol{\varphi}^T(\tilde{\mathbf{x}}) \boldsymbol{\varphi}(\bar{\mathbf{x}})  \\
&= \left[\boldsymbol{\varphi}(\tilde{\mathbf{x}}) - \frac{\boldsymbol{\varphi}^T(\tilde{\mathbf{x}}) \boldsymbol{\varphi}(\bar{\mathbf{x}})}{\boldsymbol{\varphi}^T(\bar{\mathbf{x}}) \boldsymbol{\varphi}(\bar{\mathbf{x}})} \boldsymbol{\varphi}(\bar{\mathbf{x}})\right]^T \hat{\mathbf{u}} + \frac{\boldsymbol{\varphi}^T(\tilde{\mathbf{x}}) \boldsymbol{\varphi}(\bar{\mathbf{x}})}{\boldsymbol{\varphi}^T(\bar{\mathbf{x}}) \boldsymbol{\varphi}(\bar{\mathbf{x}})} u_{\D}(\bar{\mathbf{x}}),
\end{align*}
which concludes the proof.
\end{proof}

The computational complexity of this approach is drastically reduced compared to the original ROD method.
Indeed, the ROD-E polynomial correction has computational complexity of $\mathcal{O}(KD)$, which is significantly better than the $\mathcal{O}(K^3+K^2D + KD)$ of the original ROD method. The polynomial correction for each point $\mathbf{\tilde{x}}$ has a computational cost of $\mathcal{O}(D)$, given by the scalar products used to compute $\alpha^{\text{ROD-E}}$ and evaluate the modified polynomial in $\tilde{\mathbf{x}}$. Since there are $K$ quadrature points on the boundary, the total complexity is $\mathcal{O}(KD)$ for each boundary cell.

\subsubsection{$L^2$-norm as distance measure}

For the ROD-$L^2$ method, the cost functional to minimize is given by
\begin{equation}
\mathcal{J}(\hat{\mathbf{v}},\lambda) = \frac{1}{2} \int_{T} (v_h(\mathbf{x}) - u_h(\mathbf{x}))^2 \diff{\mathbf{x}} + \lambda (v_h(\bar{\mathbf{x}}) - u_{\D}(\bar{\mathbf{x}})).
\end{equation}
The optimality conditions for this problem can be written as
\begin{equation}
\frac{\partial \mathcal{J}}{\partial \hat{\mathbf{v}}} = \int_{T} (v_h(\mathbf{x}) - u_h(\mathbf{x})) \boldsymbol{\varphi}(\mathbf{x}) \diff{\mathbf{x}} + \lambda \boldsymbol{\varphi}(\bar{\mathbf{x}}) = 0, \qquad
\frac{\partial \mathcal{J}}{\partial \lambda} = \boldsymbol{\varphi}^T(\bar{\mathbf{x}})\hat{\mathbf{v}} - u_{\D}(\bar{\mathbf{x}}) = 0.
\end{equation}
The linear system in this case is simply given by
\begin{equation}\begin{bmatrix}M & \boldsymbol{\varphi}(\bar{\mathbf{x}}) \\
\boldsymbol{\varphi}^T(\bar{\mathbf{x}}) & 0 \end{bmatrix}
\begin{bmatrix}\hat{\mathbf{v}} \\ \lambda \end{bmatrix} = \begin{bmatrix}M \hat{\mathbf{u}} \\ u_{\D}(\bar{\mathbf{x}}) \end{bmatrix},
\end{equation}
where $M$ is the mass matrix. Once the coefficients $\hat{\mathbf{v}}$ of the modified polynomial are obtained, the boundary conditions in the quadrature points on the discretized boundary can be computed as $v_h(\tilde{\mathbf{x}}) = \boldsymbol{\varphi}^T(\tilde{\mathbf{x}}) \hat{\mathbf{v}}$. 

\begin{proposition}
  Imposing boundary conditions with the single-constraint ROD-$L^2$ method on the discretized boundary $\tilde{\mathbf{x}}$ is equivalent to the following polynomial correction:
\begin{equation}\label{eq:RODL2}
v_h(\tilde{\mathbf{x}}) = [\boldsymbol{\varphi}(\tilde{\mathbf{x}}) - \alpha^{\text{ROD-}L^2} \boldsymbol{\varphi}(\bar{\mathbf{x}})]^T \hat{\mathbf{u}} + \alpha^{\text{ROD-}L^2} u_{\D}(\bar{\mathbf{x}}),
\end{equation}
where 
\begin{equation}
\alpha^{\text{ROD-}L^2} = \frac{\boldsymbol{\varphi}^T(\tilde{\mathbf{x}}) M^{-1} \boldsymbol{\varphi}(\bar{\mathbf{x}})}{\boldsymbol{\varphi}^T(\bar{\mathbf{x}}) M^{-1} \boldsymbol{\varphi}(\bar{\mathbf{x}})} = \frac{\boldsymbol{\varphi}^T(\tilde{\mathbf{x}}) M^{-1} \boldsymbol{\varphi}(\bar{\mathbf{x}})}{ \|\boldsymbol{\varphi}(\bar{\mathbf{x}})\|_{M^{-1}}} .
\end{equation}
\end{proposition}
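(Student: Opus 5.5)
The plan mirrors the proof of the ROD-E proposition, with the identity matrix replaced by the mass matrix $M$. The first step is to note that $M$, defined in \eqref{eq:massM}, is the Gram matrix of the basis $\{\psi_k\}$ in $L^2(T)$. It is therefore symmetric positive definite and invertible. As a consequence $M^{-1}$ is also symmetric positive definite, so $\boldsymbol{\varphi}^T(\bar{\mathbf{x}})M^{-1}\boldsymbol{\varphi}(\bar{\mathbf{x}})>0$ whenever $\boldsymbol{\varphi}(\bar{\mathbf{x}})\neq\mathbf{0}$. This last condition always holds, because the constant function lies in $\mathbb{P}_p$ and hence some basis function is nonzero at $\bar{\mathbf{x}}$. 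This guarantees that the denominator of $\alpha^{\text{ROD-}L^2}$ is well defined. I will also remark that the denominator in the statement should be read as the squared weighted norm $\|\boldsymbol{\varphi}(\bar{\mathbf{x}})\|_{M^{-1}}^2=\boldsymbol{\varphi}^T(\bar{\mathbf{x}})M^{-1}\boldsymbol{\varphi}(\bar{\mathbf{x}})$.

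Next I solve the saddle-point system. Expanding $v_h-u_h=\boldsymbol{\varphi}^T(\hat{\mathbf{v}}-\hat{\mathbf{u}})$, the first optimality condition reads $M(\hat{\mathbf{v}}-\hat{\mathbf{u}})+\lambda\boldsymbol{\varphi}(\bar{\mathbf{x}})=0$. Multiplying by $M^{-1}$ gives
$$\hat{\mathbf{v}}=\hat{\mathbf{u}}-\lambda M^{-1}\boldsymbol{\varphi}(\bar{\mathbf{x}}).$$
Substituting into the constraint $\boldsymbol{\varphi}^T(\bar{\mathbf{x}})\hat{\mathbf{v}}=u_{\D}(\bar{\mathbf{x}})$ yields
$$\lambda=\frac{\boldsymbol{\varphi}^T(\bar{\mathbf{x}})\hat{\mathbf{u}}-u_{\D}(\bar{\mathbf{x}})}{\boldsymbol{\varphi}^T(\bar{\mathbf{x}})M^{-1}\boldsymbol{\varphi}(\bar{\mathbf{x}})}.$$
This is the scalar Schur complement, and no matrix inversion is required beyond $M^{-1}$ itself. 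The functional is strictly convex in $\hat{\mathbf{v}}$ because $M$ is positive definite, and the constraint is affine, so this stationary point is the unique minimizer. The single-constraint ROD-$L^2$ polynomial is therefore uniquely determined.

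Finally I evaluate $v_h(\tilde{\mathbf{x}})=\boldsymbol{\varphi}^T(\tilde{\mathbf{x}})\hat{\mathbf{v}}$, which gives
$$v_h(\tilde{\mathbf{x}})=\boldsymbol{\varphi}^T(\tilde{\mathbf{x}})\hat{\mathbf{u}}-\lambda\,\boldsymbol{\varphi}^T(\tilde{\mathbf{x}})M^{-1}\boldsymbol{\varphi}(\bar{\mathbf{x}}).$$
Distributing $\lambda$ into its $\hat{\mathbf{u}}$-part and its $u_{\D}$-part, and collecting the coefficient $\alpha^{\text{ROD-}L^2}$, produces exactly \eqref{eq:RODL2}. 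The rearrangement $\boldsymbol{\varphi}^T(\bar{\mathbf{x}})\hat{\mathbf{u}}\,\alpha=[\alpha\boldsymbol{\varphi}(\bar{\mathbf{x}})]^T\hat{\mathbf{u}}$ is valid because $\alpha$ is a scalar.

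No step is a genuine obstacle. The only points that need care are the invertibility of $M$ and the nonvanishing of the denominator, both handled in the first step. One could also note that choosing an $L^2$-orthonormal basis gives $M=\mathbf{I}$, in which case the result reduces to the ROD-E proposition, which serves as a consistency check.
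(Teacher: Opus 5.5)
Your proposal is correct and follows the paper's proof step by step. You solve the first optimality condition for $\hat{\mathbf{v}}=\hat{\mathbf{u}}-\lambda M^{-1}\boldsymbol{\varphi}(\bar{\mathbf{x}})$, substitute into the constraint to get $\lambda$, and evaluate at $\tilde{\mathbf{x}}$; your extra well-posedness checks (positive definiteness of $M$, $\boldsymbol{\varphi}(\bar{\mathbf{x}})\neq\mathbf{0}$ via the constant function, uniqueness by strict convexity) and your reading of the denominator as the squared norm $\|\boldsymbol{\varphi}(\bar{\mathbf{x}})\|_{M^{-1}}^2$ are all correct and go slightly beyond what the paper states.
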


\begin{proof}
The polynomial correction can be retrieved by solving the minimization problem. The optimality conditions give 
$$ \hat{\mathbf{v}} = \hat{\mathbf{u}} - \lambda M^{-1} \boldsymbol{\varphi}(\bar{\mathbf{x}}),\qquad \lambda = \frac{\boldsymbol{\varphi}^T(\bar{\mathbf{x}}) \hat{\mathbf{u}} - u_{\D}(\bar{\mathbf{x}})}{\boldsymbol{\varphi}^T(\bar{\mathbf{x}}) M^{-1} \boldsymbol{\varphi}(\bar{\mathbf{x}})} .$$
The polynomial correction can be obtained by evaluating the modified polynomial in the quadrature points on the discretized boundary $\tilde{\mathbf{x}}$, which gives
\begin{align*}
v_h(\tilde{\mathbf{x}}) &= \boldsymbol{\varphi}^T(\tilde{\mathbf{x}}) \hat{\mathbf{v}} = \boldsymbol{\varphi}^T(\tilde{\mathbf{x}}) \hat{\mathbf{u}} - \frac{\boldsymbol{\varphi}^T(\bar{\mathbf{x}}) \hat{\mathbf{u}} - u_{\D}(\bar{\mathbf{x}})}{\boldsymbol{\varphi}^T(\bar{\mathbf{x}}) M^{-1} \boldsymbol{\varphi}(\bar{\mathbf{x}})} \boldsymbol{\varphi}^T(\tilde{\mathbf{x}}) M^{-1} \boldsymbol{\varphi}(\bar{\mathbf{x}}) \\
&= \boldsymbol{\varphi}^T(\tilde{\mathbf{x}}) \hat{\mathbf{u}}  - \frac{\boldsymbol{\varphi}^T(\bar{\mathbf{x}}) \hat{\mathbf{u}}}{\boldsymbol{\varphi}^T(\bar{\mathbf{x}}) M^{-1} \boldsymbol{\varphi}(\bar{\mathbf{x}})} \bold{\varphi}^T(\tilde{\mathbf{x}}) M^{-1} \boldsymbol{\varphi}(\bar{\mathbf{x}}) + \frac{u_{\D}(\bar{\mathbf{x}})}{\boldsymbol{\varphi}^T(\bar{\mathbf{x}}) M^{-1} \boldsymbol{\varphi}(\bar{\mathbf{x}})} \boldsymbol{\varphi}^T(\tilde{\mathbf{x}}) M^{-1} \boldsymbol{\varphi}(\bar{\mathbf{x}})\\
&= \left[\boldsymbol{\varphi}(\tilde{\mathbf{x}}) - \frac{\boldsymbol{\varphi}(\tilde{\mathbf{x}}) M^{-1} \boldsymbol{\varphi}(\bar{\mathbf{x}})}{\boldsymbol{\varphi}^T(\bar{\mathbf{x}}) M^{-1} \boldsymbol{\varphi}(\bar{\mathbf{x}})} \boldsymbol{\varphi}^T(\bar{\mathbf{x}})\right]^T \hat{\mathbf{u}} + \frac{\boldsymbol{\varphi}^T(\tilde{\mathbf{x}}) M^{-1} \boldsymbol{\varphi}(\bar{\mathbf{x}})}{\boldsymbol{\varphi}^T(\bar{\mathbf{x}}) M^{-1} \boldsymbol{\varphi}(\bar{\mathbf{x}})} u_{\D}(\bar{\mathbf{x}}),
\end{align*}
which concludes the proof.
\end{proof}

The computational complexity of this approach is also smaller than the original ROD method, although it is higher due to the presence of the mass matrix. Let us consider that the inversion of the mass matrix does not affect the computational cost of the new boundary treatment, since it has to be computed anyway by the numerical scheme. Thus, the ROD-$L^2$ polynomial correction has a computational complexity of $\mathcal{O}(KD^2+KD)$ for each boundary cell.

Although ROD-$L^2$ is less efficient than ROD-E, we propose to test both variants to show that both methods are consistent and are able to achieve high-order accuracy for fixed and moving curved domains discretized with 2D and 3D simplicial elements. In addition to that, despite the slightly larger computational cost, ROD-$L^2$ has shown to have better stability properties than ROD-E \cite{ciallella2025minimization}, which could be relevant for very high-order computations and complex applications. 

\begin{remark}[A unified correction strategy for DG, FV and ADER methods]
Both the ROD-E and ROD-$L^2$ polynomial corrections can be straightforwardly employed in several computational frameworks, such as DG, FV, and space-time ADER methods. Provided that the internal solution in the boundary element is expressed in terms of a polynomial basis (see \cref{eq:DGbasis}, \cref{eq:FVbasis}, and \cref{eq:ADERbasis} for FV, DG, and ADER methods, respectively), the polynomial corrections \eqref{eq:RODE} and \eqref{eq:RODL2} can be applied directly by simply replacing the degrees of freedom $\hat{\mathbf{u}}$ with $\hat{\mathbf{q}}_h$, $\hat{\mathbf{w}}_h$, and $\hat{\mathbf{p}}_h$, respectively. In the context of ADER methods, the set of basis functions $\boldsymbol{\varphi}$ is replaced by the space-time basis $\boldsymbol{\theta}=[\theta_1,...,\theta_Q]^T$ used for the ADER predictor.
To show this flexibility, herein, we validate the  ROD-E and ROD-$L^2$ boundary corrections in two computational frameworks: the RK-DG on fixed domains, and the more advanced ADER-ALE FV on moving domains. 
\end{remark}

\section{Numerical experiments in the RK-DG framework}\label{section:ResultsDG}

\subsection{Manufactured solution on 2D fixed meshes}\label{subsection:ManufacturedFixed}

We start by assessing the accuracy of the novel boundary conditions on two-dimensional fixed meshes. 
We consider a smooth manufactured solution within a circular domain. This is achieved by introducing a source term into the two-dimensional inhomogeneous Euler equations:
\[
\mathbf{s} = \begin{bmatrix} 0.4\cos(x+y) \\ 0.6\cos(x+y) \\ 0.6\cos(x+y) \\ 1.8\cos(x+y) \end{bmatrix}.
\]
The exact steady-state solution is given by
\[
\rho = 1 + 0.2 \sin(x+y),\quad U_1 = 1,\quad U_2 = 1,\quad p = 1 + 0.2 \sin(x+y),
\]
which is imposed on the boundary via a ghost state. We here compare the solutions obtained with the ROD-E and ROD-$L^2$ corrections and the solution obtained without any correction, by imposing the boundary condition of the physical boundary onto the computational one.

In \cref{fig:manuf2Dfix}, we present the numerical results obtained with a DG-$\mathbb{P}_4$ scheme by comparing the solutions obtained with and without the ROD-E correction. Qualitatively, we can already appreciate the impact of the boundary correction. In the convergence analysis presented in \cref{tab:manuf2D}, we can see even more the impact of the corrections on the final numerical solution, which cannot be better than second-order accurate if no correction is applied. It can be noticed that almost no improvement is observed when increasing the polynomial degree on the same mesh.
On the contrary, both ROD-E and ROD-$L^2$ methods appear to achieve arbitrary high-order accuracy, validated up to fifth order. 

\begin{figure}
  \centering
  \subfigure[]{\includegraphics[width=0.48\textwidth]{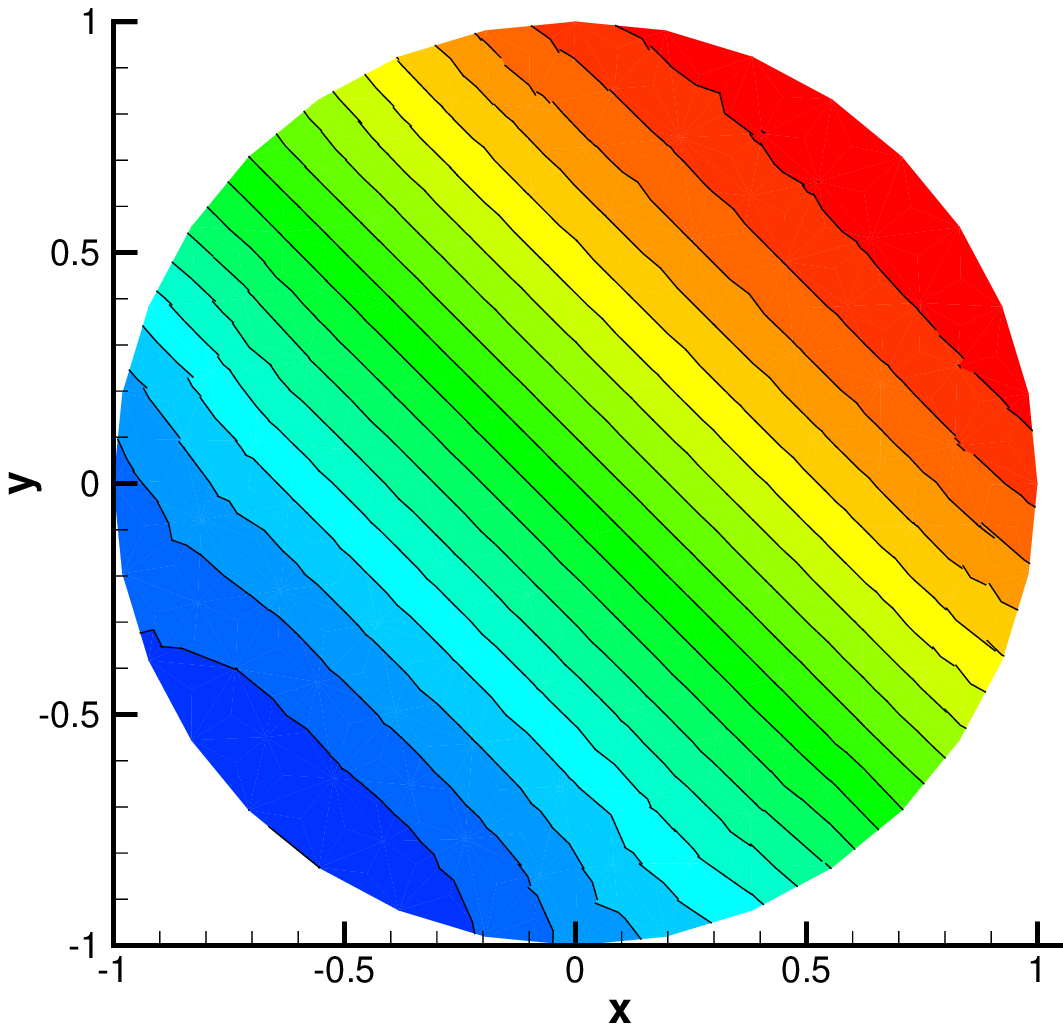}}
  \subfigure[]{\includegraphics[width=0.48\textwidth]{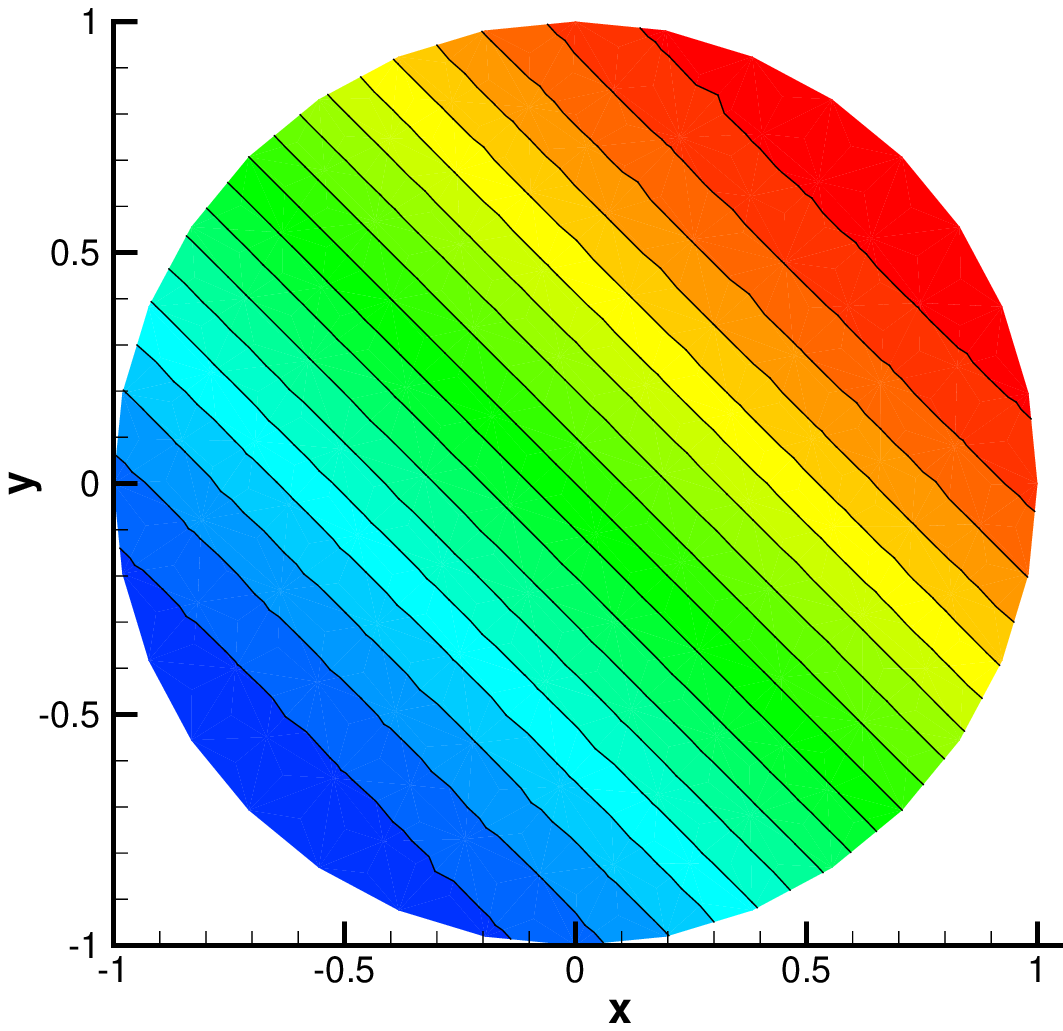}}
  \caption{Manufactured solution on 2D fixed meshes: DG-$\mathbb{P}_4$ numerical solution for the test case presented in~\cref{subsection:ManufacturedFixed}. Numerical solution obtained without (left) and with (right) the ROD-E polynomial correction.}\label{fig:manuf2Dfix}
\end{figure}

\setlength{\tabcolsep}{3pt}  % default è 6pt
\begin{table}
  \caption{Manufactured solution on 2D fixed meshes: convergence analysis at $t=0.1$ for the test case presented in~\cref{subsection:ManufacturedFixed}. Numerical results obtained with the RK-DG framework and Dirichlet-type boundary conditions imposed with and w/o the ROD-E and ROD-$L^2$ polynomial corrections on fixed conformal linear meshes.}\label{tab:manuf2D}
  \footnotesize
  \centering
  \begin{tabular}{ccccccccccccc}
          \hline\hline          &\multicolumn{2}{c}{$\rho$} &\multicolumn{2}{c}{$\rho U_1$}   &\multicolumn{2}{c}{$\rho$} &\multicolumn{2}{c}{$\rho U_1$}  &\multicolumn{2}{c}{$\rho$} &\multicolumn{2}{c}{$\rho U_1$}  \\[0.5mm]
          \cline{2-13}
          Grid size & Error        & Order & Error        & Order & Error      & Order  & Error      & Order & Error      & Order  & Error      & Order  \\[0.5mm]\hline
          &\multicolumn{12}{c}{DG-$\mathbb{P}_1$}\\
          &\multicolumn{4}{c}{w/o correction}   &\multicolumn{4}{c}{ROD-E}  &\multicolumn{4}{c}{ROD-$L^2$} \\[0.5mm]
1.05E-1   & 3.92E-04  &   --   & 4.42E-04  &   --  & 3.96E-04  &  --  & 4.29E-04  &  --  &  3.95E-04 &   -- &	4.29E-04 &   --  \\
5.27E-2   & 9.78E-05  &  2.00  & 1.11E-04  &  1.99 & 9.74E-05  & 2.02 & 1.07E-04  & 2.00 &  9.74E-05 & 2.01	& 1.07E-04 & 2.00  \\
2.63E-2   & 2.43E-05  &  2.00  & 2.77E-05  &  2.00 & 2.41E-05  & 2.01 & 2.66E-05  & 2.00 &  2.41E-05 & 2.01	& 2.66E-05 & 2.00  \\
1.32E-2   & 6.07E-06  &  2.00  & 6.91E-06  &  2.00 & 5.97E-06  & 2.01 & 6.63E-06  & 2.00 &  5.97E-06 & 2.00	& 6.62E-06 & 2.00  \\
          &\multicolumn{12}{c}{DG-$\mathbb{P}_2$}\\
          &\multicolumn{4}{c}{w/o correction}   &\multicolumn{4}{c}{ROD-E}  &\multicolumn{4}{c}{ROD-$L^2$} \\[0.5mm]
1.05E-1   & 1.20E-04  &   --   & 1.61E-04  &  --   & 1.31E-05  &  --  &  1.42E-05   &  --  & 1.31E-05  &  --  & 1.42E-05 &  --   \\
5.27E-2   & 2.89E-05  &  2.05  & 3.99E-05  &  2.01 & 1.85E-06  & 2.82 &  1.81E-06   & 2.96 & 1.85E-06  & 2.82 & 1.81E-06 &  2.96  \\
2.63E-2   & 7.14E-06  &  2.01  & 9.97E-06  &  2.00 & 2.80E-07  & 2.72 &  2.42E-07   & 2.90 & 2.80E-07  & 2.72 & 2.42E-07 &  2.90  \\
1.32E-2   & 1.77E-06  &  2.01  & 2.48E-06  &  2.00 & 4.52E-08  & 2.63 &  3.42E-08   & 2.83 & 4.51E-08  & 2.63 & 3.42E-08 &  2.82  \\
          &\multicolumn{12}{c}{DG-$\mathbb{P}_3$}\\
          &\multicolumn{4}{c}{w/o correction}   &\multicolumn{4}{c}{ROD-E}  &\multicolumn{4}{c}{ROD-$L^2$} \\[0.5mm]
1.05E-1   & 1.16E-04  &   -- & 1.61E-04   &   --  & 1.38E-07  &  --  & 1.58E-07  &   --  & 1.35E-07  & --    & 1.54E-07  &   --  \\
5.27E-2   & 2.89E-05  & 1.99 & 4.06E-05   &  1.98 & 8.48E-09  & 4.02 & 9.54E-09  &  4.05 & 8.42E-09  & 4.00  & 9.44E-09  &  4.02  \\
2.63E-2   & 7.19E-06  & 2.00 & 1.01E-05   &  2.00 & 5.22E-10  & 4.02 & 5.86E-10  &  4.02 & 5.20E-10  & 4.01  & 5.84E-10  &  4.01  \\
1.32E-2   & 1.79E-06  & 2.00 & 2.52E-06   &  2.00 & 3.20E-11  & 4.02 & 3.63E-11  &  4.01 & 3.20E-11  & 4.02  & 3.62E-11  &  4.00  \\
          &\multicolumn{12}{c}{DG-$\mathbb{P}_4$}\\
          &\multicolumn{4}{c}{w/o correction}   &\multicolumn{4}{c}{ROD-E}  &\multicolumn{4}{c}{ROD-$L^2$} \\[0.5mm]
1.05E-1   & 1.15E-04 & -- & 1.63E-04 &   -- & 2.71E-09 & 	--  & 2.88E-09 &  --  & 2.70E-09	& 	--  & 2.87E-09 &  -- \\
5.27E-2   & 2.90E-05 & 1.99 & 4.10E-05 & 1.98 & 9.65E-11 & 4.81 & 9.32E-11 & 4.94 & 9.64E-11	& 4.80  & 9.31E-11 & 4.94  \\
2.63E-2   & 7.22E-06 & 2.00 & 1.02E-05 & 2.00 & 3.66E-12 & 4.72 & 3.18E-12 & 4.87 & 3.66E-12	& 4.72  & 3.18E-12 & 4.87  \\
1.32E-2   & 1.80E-06 & 2.00 & 2.55E-06 & 2.00 & 7.03E-14 & 5.70 & 7.82E-14 & 5.34 & 7.03E-14	& 5.70  & 7.82E-14 & 5.34  \\
          \hline\hline\\[1pt]
  \end{tabular}
  \end{table}

\subsection{Supersonic vortex bounded by two circular walls on 2D fixed meshes}\label{subsection:vortex}

To assess the accuracy of the novel wall boundary conditions, we simulate an isentropic supersonic vortex flow confined between two concentric circular arcs. The inner and outer radii are given by $r_i = 1$ and $r_o = 1.384$, respectively. On the inner circle, the Mach number is $M_i = 2.25$ and the density is set to $\rho_i = 1$. The exact density distribution depends on the radial coordinate $r$ according to
\begin{equation}
\rho = \rho_i \left( 1 + \frac{\gamma+1}{2}M_i^2 \left(1- \left(\frac{r_i}{r}\right)^2 \right) \right)^{\frac{1}{\gamma-1}}.
\end{equation}
The velocity magnitude and pressure are expressed as
\begin{equation}
\|\mathbf{U}\|=\frac{c_i M_i}{r}, \qquad p = \frac{\rho^\gamma}{\gamma},
\end{equation}
where $c_i$ denotes the speed of sound at the inner radius. Finally, the Cartesian velocity components are obtained from
\begin{equation}
\begin{bmatrix} U_1 \\ U_2 \end{bmatrix} = \|\mathbf{U}\| \begin{bmatrix} y/r \\ -x/r \end{bmatrix},
\end{equation}
with $r = \sqrt{x^2 + y^2}$.

For this test, we aim at validating the ROD-E and ROD-$L^2$ boundary corrections on the more complex supersonic vortex bounded by two circular walls, which needs the consistent development of high-order slip-wall boundary conditions.
As also observed in previous works \cite{ciallella2023shifted,krivodonova2006high} and in \cref{tab:vortex2D}, by simply reverting the velocity vector at the wall as it is commonly done to impose slip-wall boundary conditions, discretization errors and convergence orders are extremely compromised in the presence of curved boundaries. In particular, when fixing a given mesh and increasing the polynomial degree, the standard slip-wall condition with no correction produces even higher discretization errors and convergence slopes lower than second-order. For $\mathbb{P}_4$, the order-of-accuracy even drops to first order with discretization errors one order of magnitude higher than those obtained with $\mathbb{P}_1$.
Both ROD-E and ROD-$L^2$ corrections recover all the high-order convergence slopes with discretization errors that tend towards machine precision. The impact of the boundary corrections is much visible in this case and can be visualized in \cref{fig:vortex2D}. 

\begin{figure}
  \centering
  \subfigure[]{\includegraphics[width=0.48\textwidth]{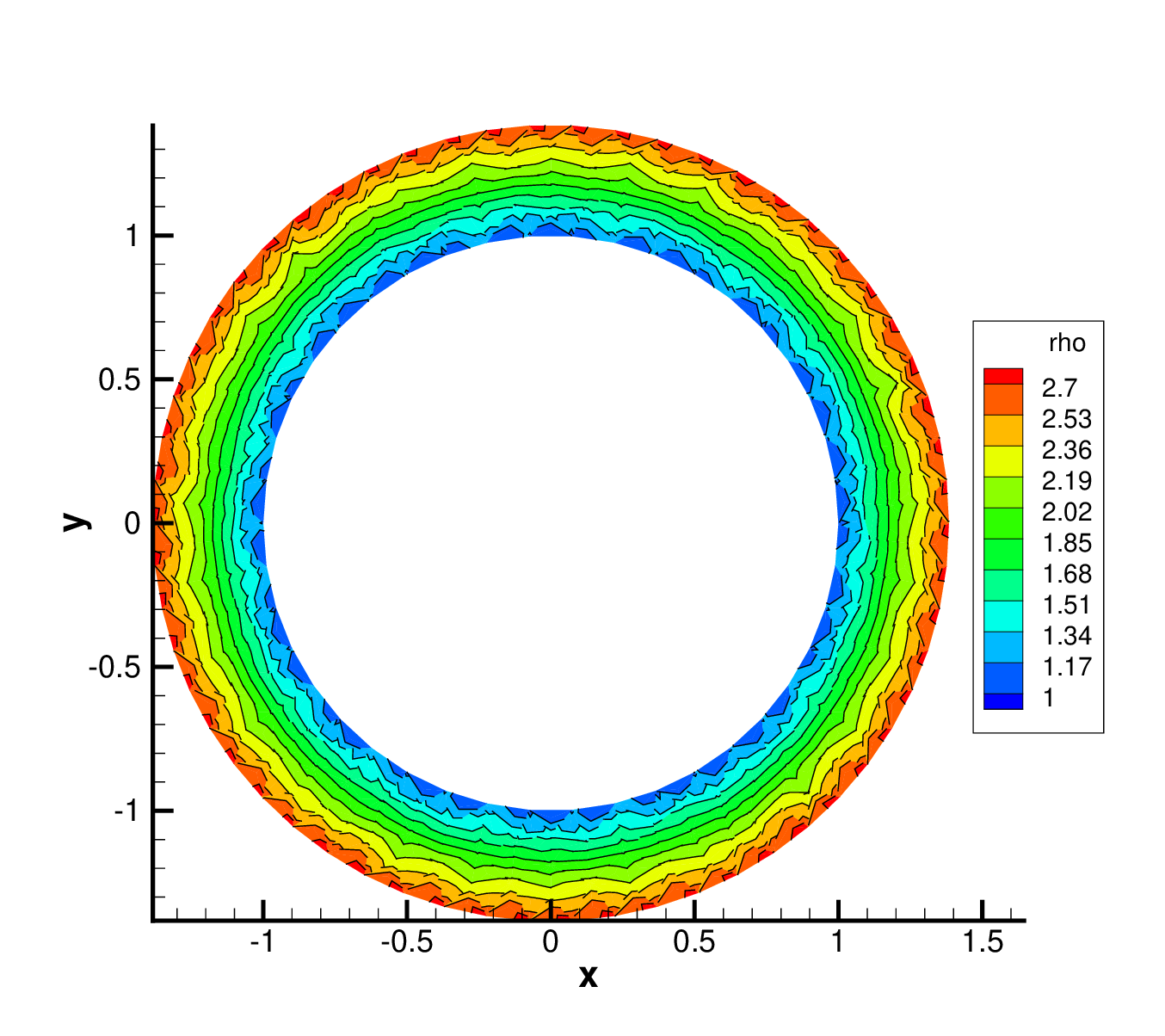}}
  \subfigure[]{\includegraphics[width=0.48\textwidth]{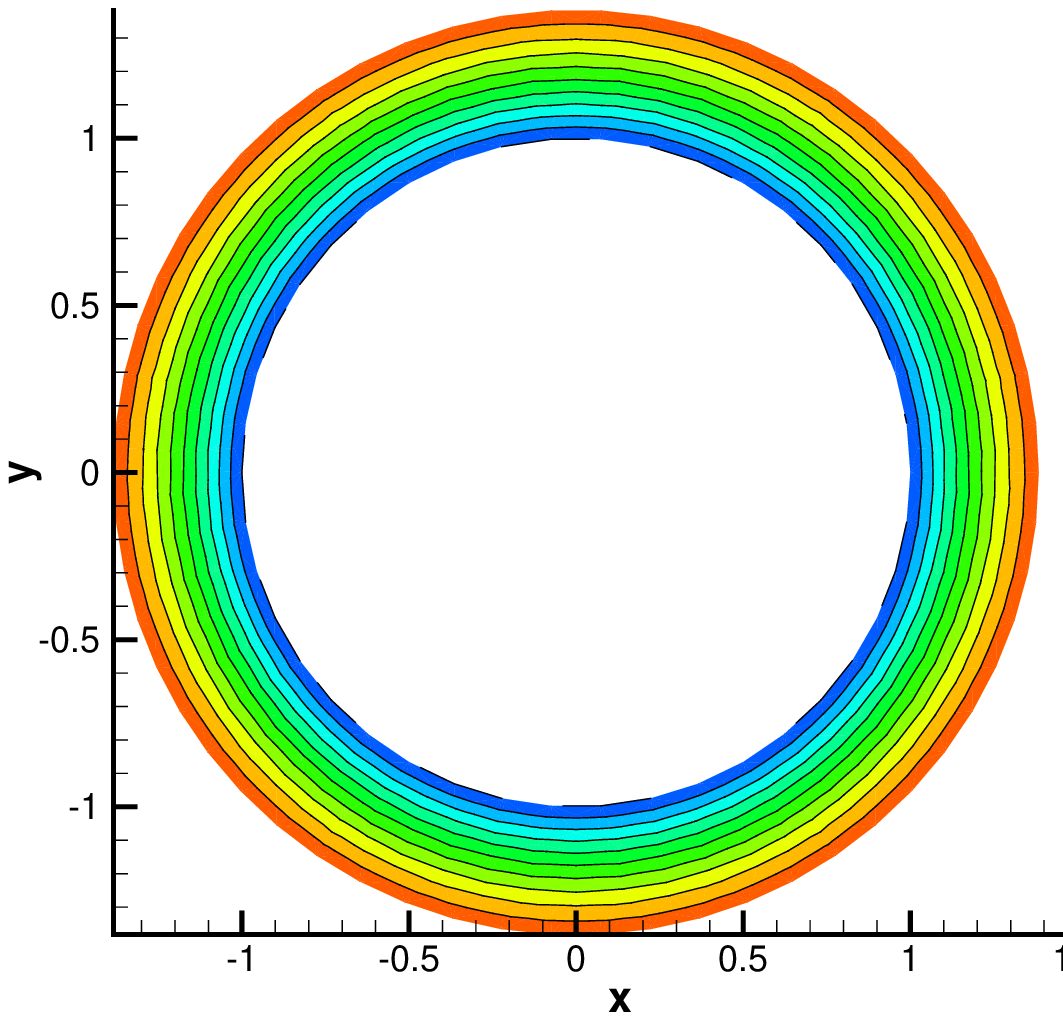}}
  \caption{Supersonic vortex bounded by two circular walls on 2D fixed meshes: DG-$\mathbb{P}_4$ numerical solution for the test case presented in~\cref{subsection:vortex}. Numerical solution obtained without (left) and with (right) the ROD-E polynomial correction.}\label{fig:vortex2D}
\end{figure}

\setlength{\tabcolsep}{3pt}  % default è 6pt
\begin{table}
  \caption{Supersonic vortex bounded by two circular walls on 2D fixed meshes: convergence analysis at $t=0.1$ for the test case presented in~\cref{subsection:vortex}.
  Numerical results obtained with the RK-DG framework and slip-wall boundary conditions imposed with and w/o the ROD-E and ROD-$L^2$ polynomial corrections on fixed conformal linear meshes.}\label{tab:vortex2D}
  \footnotesize
  \centering
  \begin{tabular}{ccccccccccccc}
          \hline\hline          &\multicolumn{2}{c}{$\rho$} &\multicolumn{2}{c}{$\rho U_1$}   &\multicolumn{2}{c}{$\rho$} &\multicolumn{2}{c}{$\rho U_1$}  &\multicolumn{2}{c}{$\rho$} &\multicolumn{2}{c}{$\rho U_1$}  \\[0.5mm]
          \cline{2-13}
          Grid size & Error        & Order & Error        & Order & Error      & Order  & Error      & Order & Error      & Order  & Error      & Order  \\[0.5mm]\hline
          &\multicolumn{12}{c}{DG-$\mathbb{P}_1$}\\
          &\multicolumn{4}{c}{w/o correction}   &\multicolumn{4}{c}{ROD-E}  &\multicolumn{4}{c}{ROD-$L^2$} \\[0.5mm]
7.52E-2   &  3.03E-02 &  --  & 4.15E-02 &  --  &  9.51E-03 &   -- & 1.84E-02 &  --  & 9.52E-03 &	 -- & 1.84E-02 &   -- \\
3.76E-2   &  1.03E-02 & 1.55 & 1.40E-02 & 1.56 &  2.27E-03 & 2.0  & 4.24E-03 & 2.11	& 2.27E-03 & 2.07	& 4.24E-03 & 2.11  \\  
1.88E-2   &  3.58E-03 & 1.52 & 4.87E-03 & 1.52 &  5.77E-04 & 1.9  & 1.06E-03 & 2.00	& 5.77E-04 & 1.97	& 1.06E-03 & 2.00  \\
9.40E-3   &  1.26E-03 & 1.51 & 1.71E-03 & 1.51 &  1.47E-04 & 1.9  & 2.67E-04 & 1.98	& 1.47E-04 & 1.97	& 2.67E-04 & 1.98  \\
          &\multicolumn{12}{c}{DG-$\mathbb{P}_2$}\\
          &\multicolumn{4}{c}{w/o correction}   &\multicolumn{4}{c}{ROD-E}  &\multicolumn{4}{c}{ROD-$L^2$} \\[0.5mm]
7.52E-2   & 4.52E-02 & 	--  & 5.55E-02 &  --  &  7.44E-04 & 	-- & 1.26E-03 &   -- &  7.34E-04 &  --  & 1.24E-03 &  --  \\
3.76E-2   & 1.78E-02 & 1.34 & 2.10E-02 & 1.39 &  1.15E-04 & 2.69 & 1.91E-04 & 2.72 &  1.14E-04 & 2.68 & 1.89E-04 & 2.71  \\
1.88E-2   & 6.29E-03 & 1.49 & 7.58E-03 & 1.47 &  1.43E-05 & 3.00 & 2.34E-05 & 3.02 &  1.76E-05 & 2.69 & 2.75E-05 & 2.78  \\
9.40E-3   & 2.31E-03 & 1.44 & 2.70E-03 & 1.49 &  2.60E-06 & 2.46 & 3.87E-06 & 2.59 &  2.59E-06 & 2.76 & 3.86E-06 & 2.82  \\
          &\multicolumn{12}{c}{DG-$\mathbb{P}_3$}\\
          &\multicolumn{4}{c}{w/o correction}   &\multicolumn{4}{c}{ROD-E}  &\multicolumn{4}{c}{ROD-$L^2$} \\[0.5mm]
7.52E-2   & 1.07E-01 & 	--  & 9.58E-02 &  --  & 2.80E-05 & 	--  & 4.88E-05 &  --  & 2.72E-05 & 	--  & 4.78E-05 & 	--  \\
3.76E-2   & 5.76E-02 & 0.89 & 4.73E-02 & 1.01 & 1.74E-06 & 4.00 & 3.08E-06 & 3.98 & 1.73E-06 & 3.97 & 3.06E-06 & 3.96 \\
1.88E-2   & 2.68E-02 & 1.10 & 2.25E-02 & 1.07 & 1.07E-07 & 4.02 & 1.88E-07 & 4.03 & 1.07E-07 & 4.02 & 1.87E-07 & 4.03 \\
9.40E-3   & 1.27E-02 & 1.08 & 1.05E-02 & 1.10 & 6.41E-09 & 4.05 & 1.12E-08 & 4.06 & 6.41E-09 & 4.05 & 1.12E-08 & 4.06 \\
          &\multicolumn{12}{c}{DG-$\mathbb{P}_4$}\\
          &\multicolumn{4}{c}{w/o correction}   &\multicolumn{4}{c}{ROD-E}  &\multicolumn{4}{c}{ROD-$L^2$} \\[0.5mm]
7.52E-2   & 1.19E-01 & 	--  & 1.13E-01 &  --  & 2.93E-06 & 	 -- & 6.30E-06 & 	--  & 2.89E-06 & 	--  & 6.24E-06 &  --  \\
3.76E-2   & 6.39E-02 & 0.89 & 5.65E-02 & 1.00 & 1.12E-07 & 4.70	& 2.43E-07 & 4.69 & 1.11E-07 & 4.70 & 2.42E-07 & 4.68  \\
1.88E-2   & 3.02E-02 & 1.08 & 2.70E-02 & 1.06 & 4.00E-09 & 4.80	& 8.55E-09 & 4.82 & 3.98E-09 & 4.80 & 8.52E-09 & 4.82  \\
9.40E-3   & 1.47E-02 & 1.04 & 1.28E-02 & 1.07 & 1.31E-10 & 4.92	& 2.77E-10 & 4.94 & 1.31E-10 & 4.92 & 2.71E-10 & 4.97  \\
          \hline\hline\\[1pt]
  \end{tabular}
  \end{table}

\section{Numerical experiments in the ADER-ALE FV framework}\label{section:ResultsFV}

\subsection{Manufactured solution on 2D and 3D moving meshes}\label{subsection:ManufacturedMoving}

The two-dimensional test case described in \cref{subsection:ManufacturedFixed} is extended to a moving domain to validate the new boundary corrections in the ADER-ALE FV framework in both two and three space dimensions. 
For the 2D case, we investigate the influence of the boundary correction on a moving mesh with a radial velocity field prescribed at each boundary node $\nu$, as follows:
\[
\mathbf{V}_{\nu} = V_0 \|\mathbf{x}_{\nu}\| \begin{bmatrix} \cos(\alpha_{\nu}) \\ \sin(\alpha_{\nu}) \end{bmatrix},
\]
where $V_0 = 0.1$ denotes the maximum velocity and $\alpha_{\nu} = \arctan(y_{\nu}, x_{\nu})$. Since the domain evolves in time and the solution is spatially dependent, the boundary condition becomes time-dependent, thereby providing a more challenging test case for high-order boundary conditions.
As observed for the numerical results in the RK-DG framework, also for this moving domain problem solved with the ADER-ALE FV methods, both ROD-E and ROD-$L^2$ recover all convergence slopes as shown in \cref{tab:manuf2Dmoving}, while the results without correction present an accuracy of second order for all the polynomial basis considered.

In order to test the proposed boundary conditions in 3D, we extend the previous two-dimensional test case to a spherical moving domain. The source term in the three-dimensional inhomogeneous Euler equations is defined as:
\begin{equation*}\label{eq:manufactured3D}
  \mathbf{s} = \begin{bmatrix}  0.6\cos(x+y+z) \\ 0.8\cos(x+y+z)  \\ 0.8\cos(x+y+z) \\ 0.8\cos(x+y+z) \\ 3.0 \cos(x+y+z) \end{bmatrix},
\end{equation*}
In this case, the exact steady state solution is given by
$$ \rho = 1 + 0.2 \sin(x+y+z),\quad U_1=1,\quad U_2=1, \quad U_3=1, \quad p=1 + 0.2 \sin(x+y+z),$$
which is imposed on the boundary as a ghost state.
To study the impact of the boundary correction on the moving mesh, we impose the following velocity field
on each boundary node $\nu$:
$$ \mathbf{V}_{\nu} = V_0 \frac{\mathbf{x}_{\nu}}{\|\mathbf{x}_{\nu}\|} ,$$
where $V_0=0.1$ is the maximum velocity.
Once again, the solution is space-dependent, and the domain is evolving, providing a moving test case for the proposed high order boundary conditions in the considered ALE framework. 
The evolution of the computational domain and solution in 2D and 3D is visible in \cref{fig:manuf2D3Dmoving}, where the initial and final mesh configurations are plotted.
As in the 2D case, also for the 3D moving manufactured solution, we observe in \cref{tab:manuf3Dmoving} that all convergence orders are achieved with both ROD-E and ROD-$L^2$ corrections, while only second-order is maintained when no correction is used. Due to limited computational resources, the 3D simulations are carried out up to fourth order of accuracy, instead of fifth order as in the 2D case.

\begin{figure}
  \centering
  \subfigure[]{\includegraphics[width=0.48\textwidth]{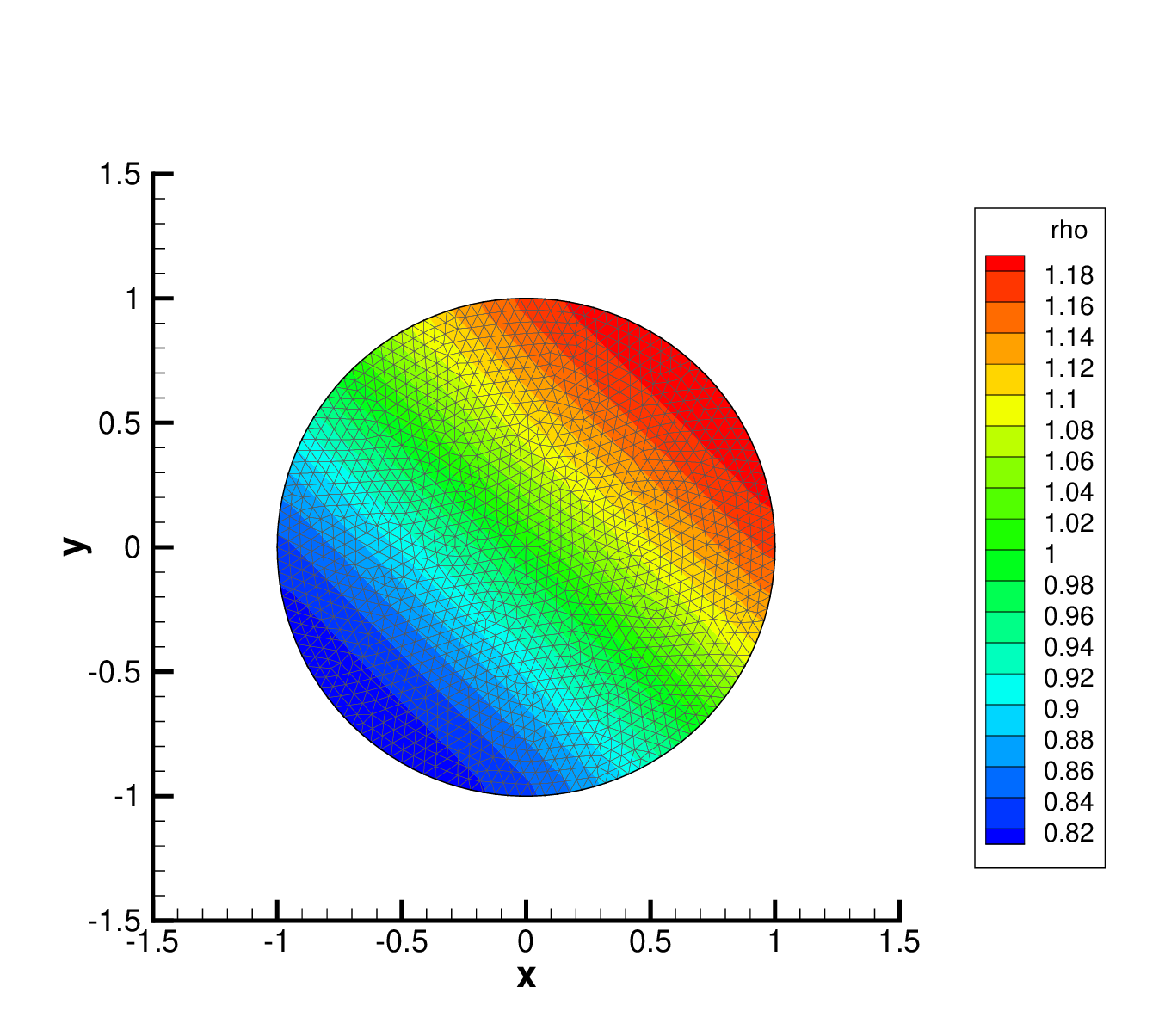}}
  \subfigure[]{\includegraphics[width=0.48\textwidth]{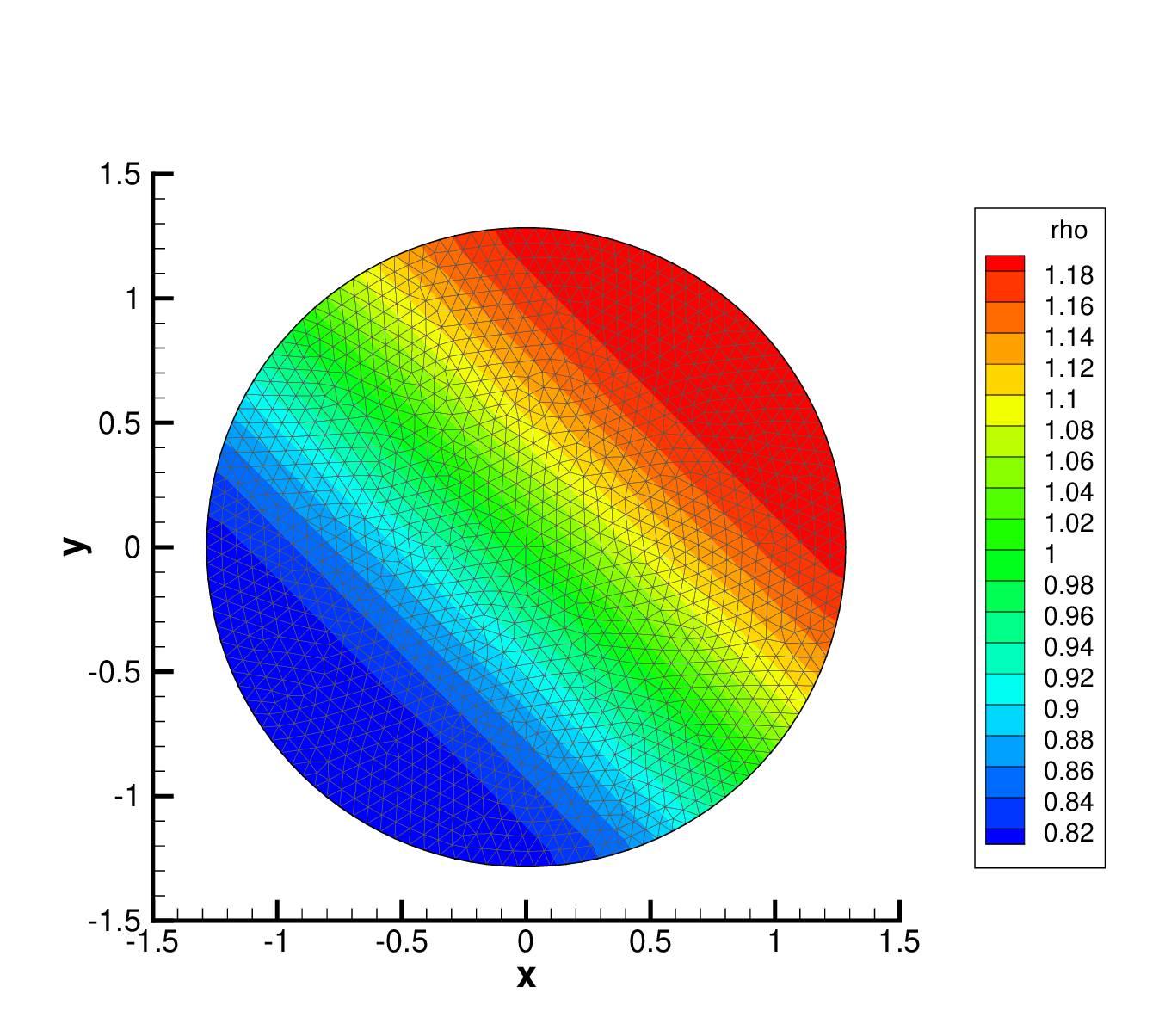}}
  \subfigure[]{\includegraphics[width=0.48\textwidth]{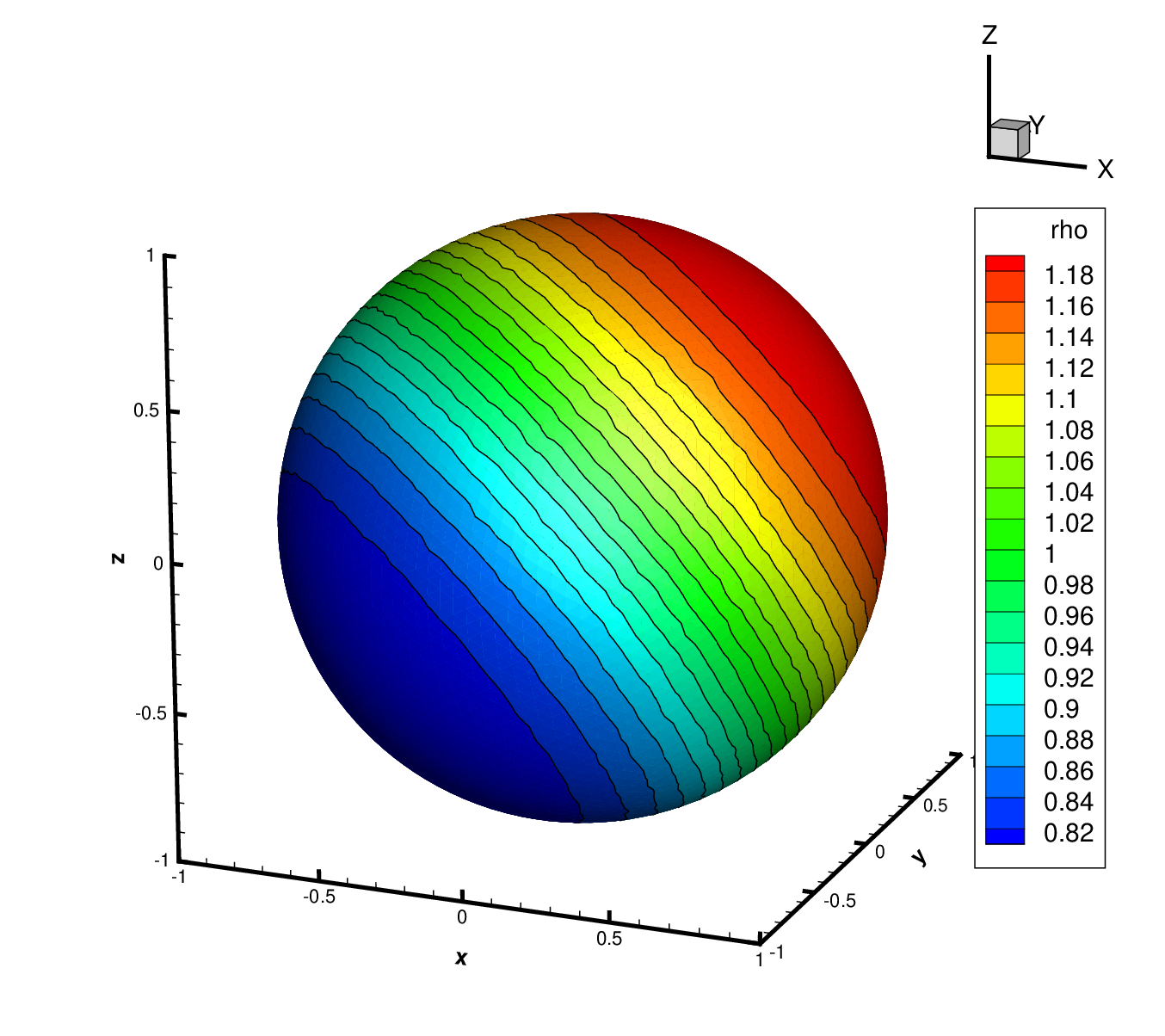}}
  \subfigure[]{\includegraphics[width=0.48\textwidth]{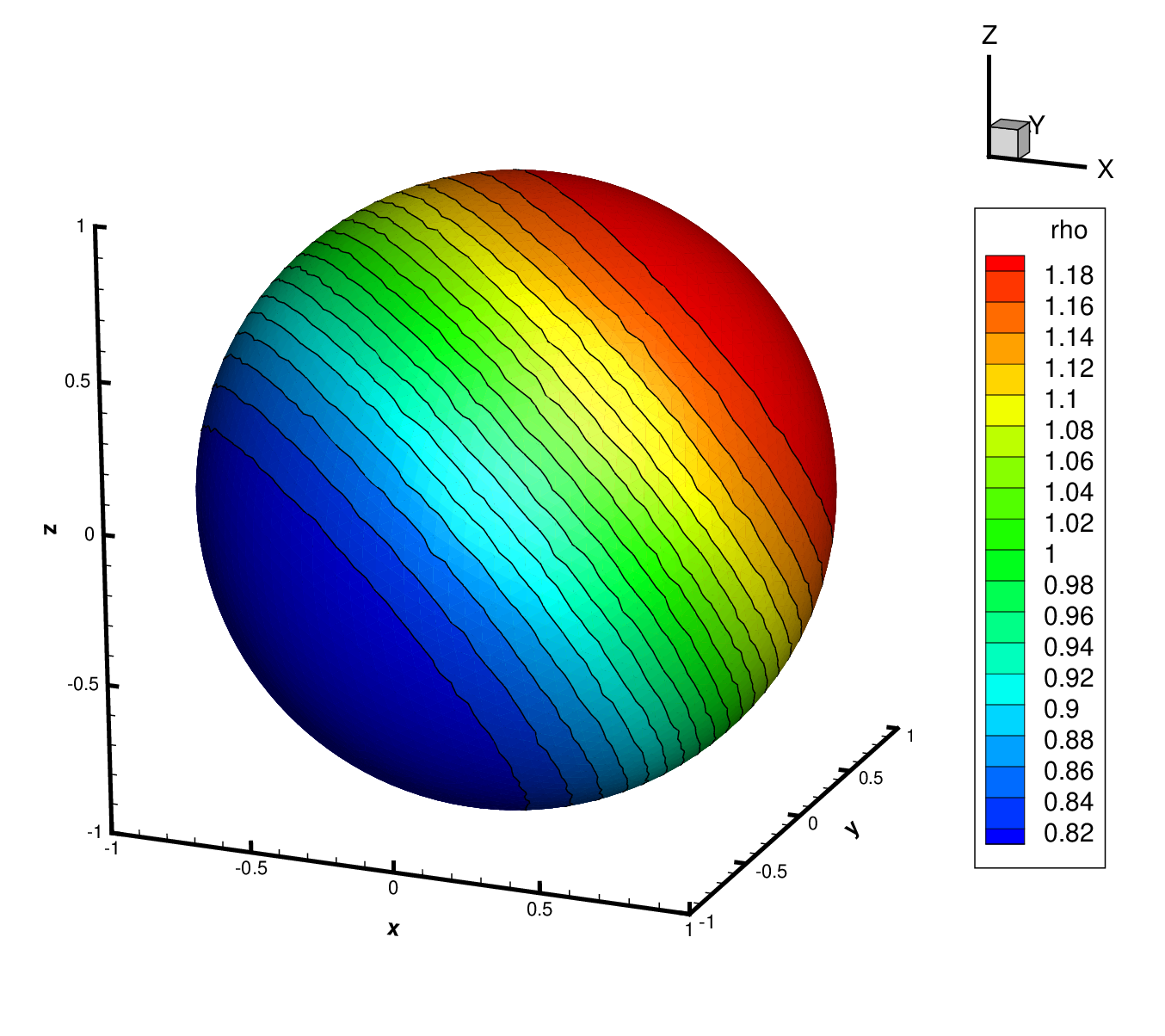}}
  \caption{Manufactured solution on 2D and 3D moving meshes: initial (left) and final (right) mesh configuration.
  The mesh is moving radially outward. The color map represents the density field.}
  \label{fig:manuf2D3Dmoving}
\end{figure}

\setlength{\tabcolsep}{3pt}  % default è 6pt
\begin{table}
  \caption{Manufactured solution on 2D moving meshes: convergence analysis at $t=0.1$ for the test case presented in~\cref{subsection:ManufacturedMoving}.
  Numerical results obtained with the ADER-ALE FV framework and Dirichlet-type boundary conditions imposed with and w/o the ROD-E and ROD-$L^2$ polynomial corrections on moving conformal linear meshes.}\label{tab:manuf2Dmoving}
  \footnotesize
  \centering
  \begin{tabular}{ccccccccccccc}
          \hline\hline          &\multicolumn{2}{c}{$\rho$} &\multicolumn{2}{c}{$U_1$}   &\multicolumn{2}{c}{$\rho$} &\multicolumn{2}{c}{$U_1$}  &\multicolumn{2}{c}{$\rho$} &\multicolumn{2}{c}{$U_1$}  \\[0.5mm]
          \cline{2-13}
          Grid size & Error        & Order & Error        & Order & Error      & Order  & Error      & Order & Error      & Order  & Error      & Order  \\[0.5mm]\hline
          &\multicolumn{12}{c}{FV-$\mathbb{P}_1$}\\
          &\multicolumn{4}{c}{w/o correction}   &\multicolumn{4}{c}{ROD-E}  &\multicolumn{4}{c}{ROD-$L^2$} \\[0.5mm]
1.90E-01  &  4.47E-03  &   --   &  2.00E-03   &  --  & 4.62E-03	& --   &  1.96E-03 &  --  & 4.60E-03 & 	--  & 1.96E-03 &  -- \\    
9.76E-02  &  1.27E-03  &  1.89  &  7.32E-04   & 1.51 & 1.29E-03	& 1.91 &	7.31E-04 & 1.48 & 1.28E-03 & 1.84 & 7.31E-04 & 1.42 \\ 
5.04E-02  &  3.59E-04  &  1.91  &  2.27E-04   & 1.77 & 3.61E-04	& 1.92 &	2.27E-04 & 1.76 & 3.61E-04 & 1.83 & 2.27E-04 & 1.68 \\ 
2.55E-02  &  9.90E-05  &  1.89  &  6.78E-05   & 1.77 & 9.93E-05	& 1.89 &	6.79E-05 & 1.77 & 9.92E-05 & 1.86 & 6.79E-05 & 1.74 \\           
          &\multicolumn{12}{c}{FV-$\mathbb{P}_2$}\\
          &\multicolumn{4}{c}{w/o correction}   &\multicolumn{4}{c}{ROD-E}  &\multicolumn{4}{c}{ROD-$L^2$} \\[0.5mm]
1.90E-01  &  6.83E-04  &   --   &  2.30E-04   & --   & 7.79E-04 &  --  & 2.53E-04	&  --  & 7.76E-04	&  --  & 2.53E-04	&  -- \\
9.76E-02  &  8.56E-05  &  3.11  &  4.32E-05   & 2.51 & 8.16E-05 & 3.38 & 2.49E-05	& 3.47 & 8.15E-05	& 3.38 & 2.49E-05	& 3.47 \\
5.04E-02  &  1.73E-05  &  2.42  &  1.11E-05   & 2.05 & 1.01E-05 & 3.15 & 3.60E-06	& 2.92 & 1.01E-05	& 3.15 & 3.60E-06	& 2.92 \\
2.55E-02  &  4.33E-06  &  2.03  &  2.84E-06   & 2.00 & 1.24E-06 & 3.08 & 4.64E-07	& 3.00 & 1.24E-06	& 3.08 & 4.64E-07	& 3.00 \\        
          &\multicolumn{12}{c}{FV-$\mathbb{P}_3$}\\
          &\multicolumn{4}{c}{w/o correction}   &\multicolumn{4}{c}{ROD-E}  &\multicolumn{4}{c}{ROD-$L^2$} \\[0.5mm]
1.90E-01  & 2.66E-04  &   --   &  1.96E-04   & --    & 9.91E-05	 & 	--   & 5.50E-05	& --   & 9.63E-05	 & --    & 5.80E-05 &  --   \\
9.76E-02  & 6.78E-05  &  2.05  &  4.56E-05   & 2.19  & 6.45E-06  & 4.10  & 4.08E-06 &	3.90 &  6.33E-06 &	4.08 & 4.14E-06 & 3.96    \\
5.04E-02  & 1.80E-05  &  2.00  &  1.18E-05   & 2.05  & 5.25E-07  & 3.79  & 3.92E-07 &	3.54 &  5.19E-07 &	3.78 & 3.94E-07 & 3.55    \\
2.55E-02  & 4.56E-06  &  2.01  &  2.97E-06   & 2.02  & 3.70E-08  & 3.89  & 3.32E-08 &	3.62 &  3.68E-08 &	3.88 & 3.32E-08 & 3.62    \\
          &\multicolumn{12}{c}{FV-$\mathbb{P}_4$}\\
          &\multicolumn{4}{c}{w/o correction}   &\multicolumn{4}{c}{ROD-E}  &\multicolumn{4}{c}{ROD-$L^2$} \\[0.5mm]
1.90E-01  & 2.69E-04 &   -- & 1.80E-04 &  --  & 2.48E-05 & --   & 1.07E-05 &  --  & 2.47E-05 & --   & 1.08E-05 &	 --  \\
9.76E-02  & 7.00E-05 & 2.02 & 4.55E-05 & 2.06 & 7.15E-07 & 5.32 & 3.22E-07 & 5.26 & 7.09E-07 & 5.32 & 3.22E-07 &	5.26 \\
5.04E-02  & 1.82E-05 & 2.03 & 1.18E-05 & 2.04 & 2.45E-08 & 5.10 & 1.58E-08 & 4.55 & 2.44E-08 & 5.09 & 1.58E-08 &	4.56 \\
2.55E-02  & 4.57E-06 & 2.02 & 2.97E-06 & 2.02 & 2.82E-09 & 3.17 & 1.13E-09 & 3.87 & 2.81E-09 & 3.17 & 1.12E-09 &	3.87 \\
          \hline\hline\\[1pt]
  \end{tabular}
  \end{table}

\setlength{\tabcolsep}{3pt}  % default è 6pt
\begin{table}
  \caption{Manufactured solution on 3D moving meshes: convergence analysis at $t=0.1$ for the test case presented in~\cref{subsection:ManufacturedMoving}.
  Numerical results obtained with the ADER-ALE FV framework and Dirichlet-type boundary conditions imposed with and w/o the ROD-E and ROD-$L^2$ polynomial corrections on moving conformal linear meshes.}\label{tab:manuf3Dmoving}
  \footnotesize
  \centering
  \begin{tabular}{ccccccccccccc}
          \hline\hline          &\multicolumn{2}{c}{$\rho$} &\multicolumn{2}{c}{$U_1$}   &\multicolumn{2}{c}{$\rho$} &\multicolumn{2}{c}{$U_1$}  &\multicolumn{2}{c}{$\rho$} &\multicolumn{2}{c}{$U_1$}  \\[0.5mm]
          \cline{2-13}
          Grid size & Error        & Order & Error        & Order & Error      & Order  & Error      & Order & Error      & Order  & Error      & Order  \\[0.5mm]\hline
          &\multicolumn{12}{c}{FV-$\mathbb{P}_1$}\\
          &\multicolumn{4}{c}{w/o correction}   &\multicolumn{4}{c}{ROD-E}  &\multicolumn{4}{c}{ROD-$L^2$} \\[0.5mm]
2.56E-01  &  3.44E-03  &   --  &  1.13E-03   &  --  &  4.15E-03 &  --  & 9.03E-04 &  --  &  4.07E-03 & 	--  & 9.23E-04 &  --   \\
1.45E-01  &  1.27E-03  & 1.75  &  4.69E-04   & 1.55 &  1.38E-03 & 1.64 & 4.23E-04 & 1.13 &  1.38E-03 & 1.62 & 4.26E-04 & 1.16  \\          
7.75E-02  &  3.52E-04  & 2.06  &  1.56E-04   & 1.76 &  3.75E-04 & 1.97 & 1.48E-04 & 1.58 &  3.74E-04 & 1.97 & 1.48E-04 & 1.59  \\
3.91E-02  &  8.99E-05  & 2.00  &  4.54E-05   & 1.80 &  9.46E-05 & 2.02 & 4.42E-05 & 1.77 &  9.45E-05 & 2.01 & 4.42E-05 & 1.77  \\
          &\multicolumn{12}{c}{FV-$\mathbb{P}_2$}\\
          &\multicolumn{4}{c}{w/o correction}   &\multicolumn{4}{c}{ROD-E}  &\multicolumn{4}{c}{ROD-$L^2$} \\[0.5mm]
2.56E-01  &  2.02E-03  &  --   &  4.40E-04   &   --  &  2.14E-03 & --	  & 2.48E-04 &  --  & 2.15E-03 & 	--  & 2.58E-04 &  --    \\
1.45E-01  &  4.09E-04  & 2.80  &  1.21E-04   &  2.27 &  3.40E-04 & 2.76	& 4.93E-05 & 2.42 & 3.41E-04 & 2.76	& 4.99E-05 & 2.46    \\
7.75E-02  &  8.38E-05  & 2.54  &  3.24E-05   &  2.11 &  4.43E-05 & 3.08	& 8.17E-06 & 2.71 & 4.43E-05 & 3.08	& 8.20E-06 & 2.73    \\
3.91E-02  &  1.94E-05  & 2.14  &  8.28E-06   &  2.00 &  5.07E-06 & 3.17	& 1.15E-06 & 2.87 & 5.07E-06 & 3.18	& 1.15E-06 & 2.88    \\          
          &\multicolumn{12}{c}{FV-$\mathbb{P}_3$}\\
          &\multicolumn{4}{c}{w/o correction}   &\multicolumn{4}{c}{ROD-E}  &\multicolumn{4}{c}{ROD-$L^2$} \\[0.5mm]
2.56E-01  &  1.41E-03  &   --  &  4.12E-04   &  --   &  2.72E-04 & 	--  & 5.39E-05 &  --  & 2.65E-04 & 	--  & 6.27E-05 &  --    \\
1.45E-01  &  3.30E-04  & 2.55  &  1.28E-04   & 2.06  &  2.65E-05 & 3.49 & 6.95E-06 & 3.07 & 2.63E-05 & 3.46 & 7.29E-06 & 3.23   \\
7.75E-02  &  7.95E-05  & 2.28  &  3.38E-05   & 2.13  &  1.95E-06 & 3.94 & 6.24E-07 & 3.64 & 1.94E-06 & 3.94 & 6.31E-07 & 3.70   \\
3.91E-02  &  1.95E-05  & 2.06  &  8.53E-06   & 2.02  &  1.15E-07 & 4.15 & 4.70E-08 & 3.79 & 1.15E-07 & 4.14 & 4.72E-08 & 3.80   \\          
          \hline\hline\\[1pt]
  \end{tabular}
  \end{table}

\subsection{Kidder problem in 2D}\label{subsection:Kidder2D}

The so-called Kidder problem describes the isentropic compression of a shell filled with an ideal gas. 
An exact analytical solution was originally proposed by Kidder in \cite{kidder1976laser}. 
This problem is widely used as a benchmark for Lagrangian hydrodynamics codes to verify the absence 
of spurious numerical artifacts. The shell features time-dependent inner and outer radii, denoted 
$r_i(t)$ and $r_e(t)$, respectively. Their initial values are $r_i(0)=r_{i,0}=0.9$ and $r_e(0)=r_{e,0}=1$. 
The specific heat ratio is $\gamma=2$, and the initial density distribution is given by
$$ \rho_0(r) = \rho(r,0) = \left( \frac{r^2_{e,0}-r^2}{r^2_{e,0}-r^2_{i,0}}\rho_{i,0}^{\gamma-1} + 
\frac{r^2-r^2_{i,0}}{r^2_{e,0}-r^2_{i,0}}\rho_{e,0}^{\gamma-1} \right)^{\frac{1}{\gamma-1}},\qquad\text{with}\quad r_i(t)\leq r \leq r_e(t), $$
where we set $\rho_{i,0}=1$ and $\rho_{e,0}=2$, representing the initial densities at the inner and outer boundaries, respectively.
The initial entropy $s_0=\frac{p_0}{\rho_0^\gamma}=1$ is uniform, so that the initial pressure field becomes
$p_0(r)= s_0 \rho_0(r)^{\gamma}$. Initially, the fluid is at rest ($U_1=U_2\equiv 0$).

The time-dependent solution of the Kidder problem can be expressed in a self-similar form $R(r,t)=h(t) r$, 
where $R(r,t)$ denotes the radius at time $t>0$ of a fluid particle initially located at $r$. 
Consequently, for $t\in[0,\tau]$, the solution reads
\begin{align*}
\rho(R(r,t),t) &=   h(t)^{-\frac{2}{\gamma-1}} \rho_0\left(\frac{R(r,t)}{h(t)}\right),\\
U_r(R(r,t),t)  &= \frac{R(r,t)}{h(t)} \frac{\diff{}}{\diff{t}}h(t), \\
p(R(r,t),t)    &=  h(t)^{-\frac{2\gamma}{\gamma-1}}p_0\left(\frac{R(r,t)}{h(t)}\right),
\end{align*}
where $U_r$ denotes the radial velocity component.\\
The scaling factor (homothety rate) $h(t)$ and the focalisation time $\tau$ are defined as
$$ h(t) = \sqrt{1-\frac{t^2}{\tau^2}}, \qquad \tau = \sqrt{\frac{\gamma-1}{2} \frac{r^2_{e,0}-r^2_{i,0}}{c^2_{e,0}-c^2_{i,0}}}, $$
and the initial sound speeds at the inner and outer boundaries are
$$ c_{i,0} = \sqrt{\gamma\frac{ p_{i,0}}{\rho_{i,0}}}\quad \text{ and } \quad c_{e,0} = \sqrt{\gamma\frac{p_{e,0}}{\rho_{e,0}}}. $$

Boundary conditions are enforced on both the inner and outer surfaces of the shell, where the exact solution 
is imposed as a ghost state. The exact mesh velocity is prescribed at the boundary nodes.

Following \cite{carre2009cell}, the final simulation time is chosen as $t_f=\frac{\sqrt{3}}{2}\tau$, 
which yields a compression rate of $h(t_f)=1/2$ at the final time. Consequently, the final shell lies 
between radii 0.45 and 0.5. The initial and final mesh configurations are displayed in \cref{fig:Kidder2D}. 
Note that although simulations are performed on the full circular domain, only the top-right quarter is shown 
for better visualization.

We present the convergence analysis obtained for the Kidder problem in 2D in \cref{tab:Kidder2D}.
The numerical evidence shows that the high-order boundary corrections are essential to preserve the designed accuracy of the scheme, while a naive imposition of boundary data severely deteriorates the convergence rates. Without any correction, the observed convergence rates remain approximately of second order for all variables and polynomial degrees, rendering high-order approximations essentially useless. In contrast, the ROD-E and ROD-$L^2$ polynomial corrections dramatically improve the solution quality, producing errors on the finest mesh that are several orders of magnitude lower than those from the uncorrected formulation, with almost no additional computational cost.

\begin{figure}
  \centering
  \subfigure[]{\includegraphics[width=0.48\textwidth]{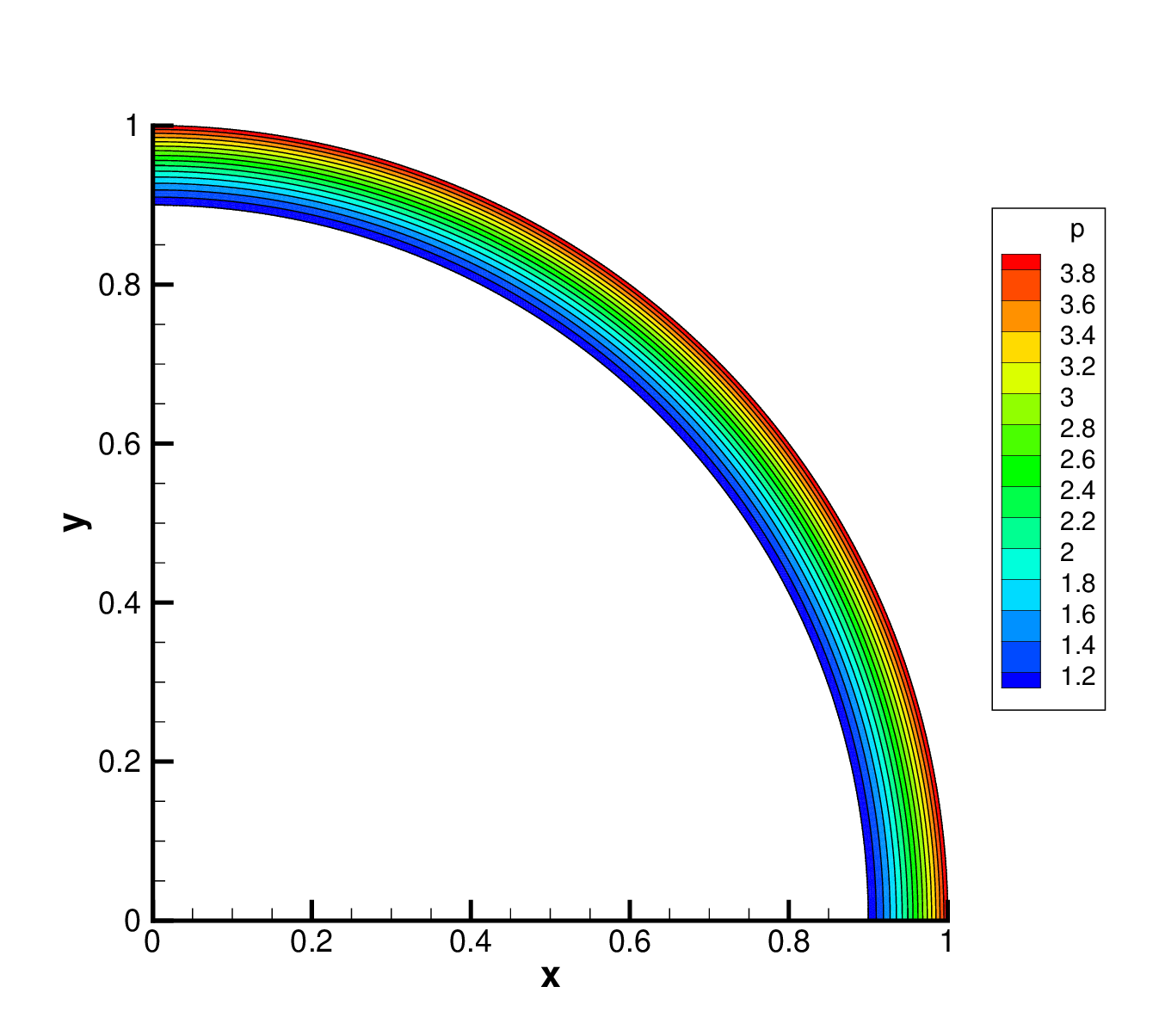}}
  \subfigure[]{\includegraphics[width=0.48\textwidth]{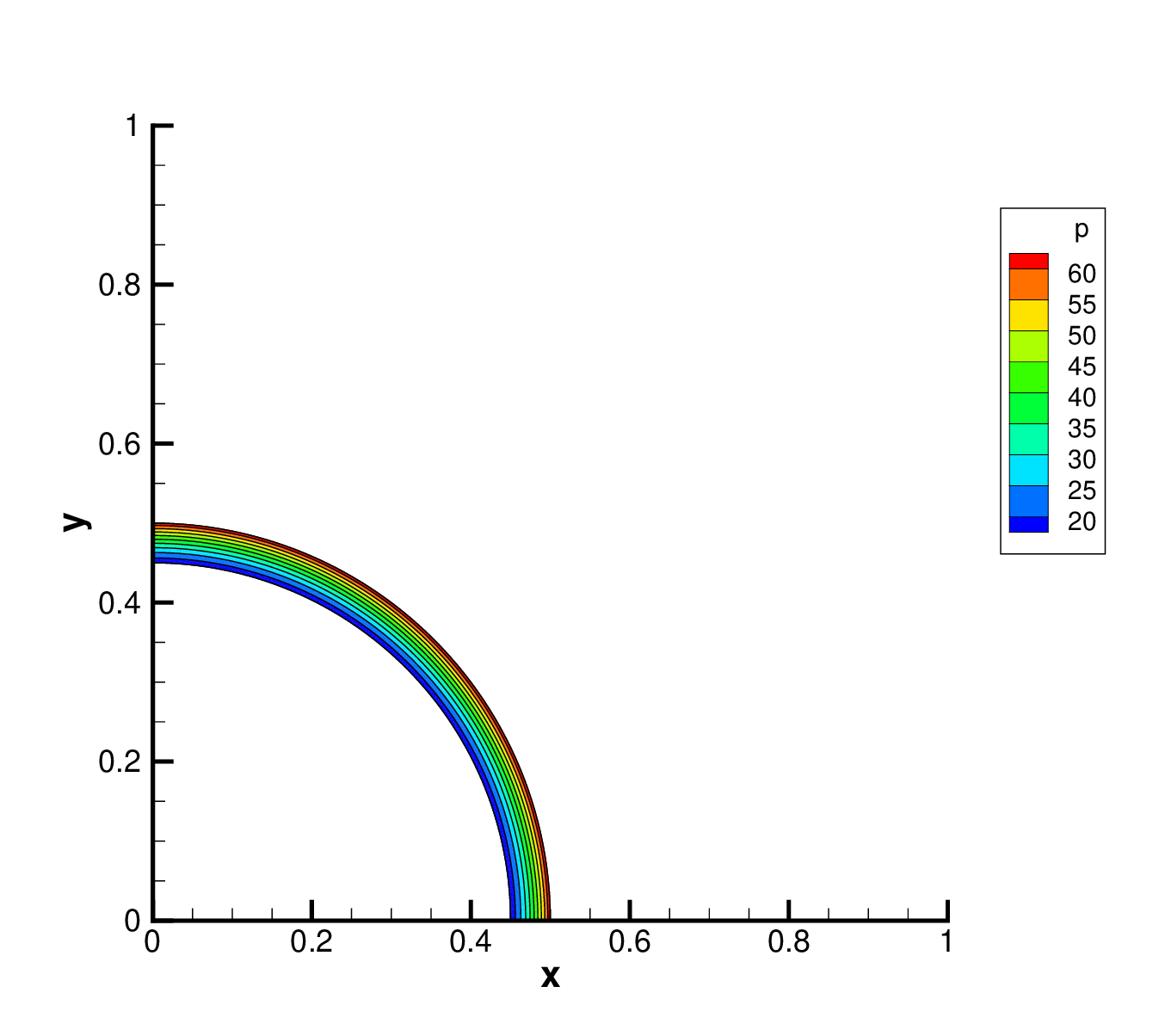}}
  \caption{Kidder problem in 2D: initial (left) and final (right) mesh configuration.
  The mesh is moving radially with a velocity field that depends on the Kidder exact solution.
  The color map represents the pressure field.}
  \label{fig:Kidder2D}
\end{figure}

\setlength{\tabcolsep}{3pt}  % default è 6pt
\begin{table}
  \caption{Kidder problem in 2D: convergence analysis for the test case presented in~\cref{subsection:Kidder2D}. 
  Numerical results obtained with the ADER-ALE FV framework and Dirichlet-type boundary conditions imposed with and w/o the ROD-E and ROD-$L^2$ polynomial corrections on moving conformal linear meshes.}\label{tab:Kidder2D}
  \footnotesize
  \centering
  \begin{tabular}{ccccccccccccc}
          \hline\hline          &\multicolumn{2}{c}{$\rho$} &\multicolumn{2}{c}{$U_1$}   &\multicolumn{2}{c}{$\rho$} &\multicolumn{2}{c}{$U_1$}  &\multicolumn{2}{c}{$\rho$} &\multicolumn{2}{c}{$U_1$}  \\[0.5mm]
          \cline{2-13}
          Grid size & Error        & Order & Error        & Order & Error      & Order  & Error      & Order & Error      & Order  & Error      & Order  \\[0.5mm]\hline
          &\multicolumn{12}{c}{FV-$\mathbb{P}_1$}\\
          &\multicolumn{4}{c}{w/o correction}   &\multicolumn{4}{c}{ROD-E}  &\multicolumn{4}{c}{ROD-$L^2$} \\[0.5mm]
1.28E-02  &  3.56E-02 & --   & 5.54E-03 & --    & 3.54E-02 & 	--  & 5.54E-03 &  --  & 3.54E-02 & 	--  & 5.54E-03 &  --  \\
5.99E-03  &  1.28E-02 & 1.35 & 1.57E-03 & 1.66  & 1.27E-02 & 1.34 & 1.57E-03 & 1.66 & 1.27E-02 & 1.34 & 1.57E-03 & 1.66 \\          
3.06E-03  &  2.97E-03 & 2.17 & 3.48E-04 & 2.24  & 2.98E-03 & 2.16 & 3.48E-04 & 2.23 & 2.98E-03 & 2.16 & 3.48E-04 & 2.23 \\
1.49E-03  &  7.35E-04 & 1.95 & 7.93E-05 & 2.06  & 7.38E-04 & 1.94 & 7.95E-05 & 2.05 & 7.38E-04 & 1.94 & 7.95E-05 & 2.05 \\
          &\multicolumn{12}{c}{FV-$\mathbb{P}_2$}\\
          &\multicolumn{4}{c}{w/o correction}   &\multicolumn{4}{c}{ROD-E}  &\multicolumn{4}{c}{ROD-$L^2$} \\[0.5mm]
1.28E-02  &  3.46E-03 & --   & 3.33E-04 & --    &  2.73E-03 & 	-- & 3.20E-04 & --   & 2.73E-03 & 	-- & 3.20E-04 &  --    \\
5.99E-03  &  6.35E-04 & 2.23 & 5.48E-05 & 2.38  &  3.83E-04 & 2.58 & 4.07E-05 & 2.71 & 3.83E-04 & 2.58 & 4.07E-05 & 2.71     \\
3.06E-03  &  1.20E-04 & 2.48 & 1.09E-05 & 2.40  &  4.05E-05 & 3.34 & 4.08E-06 & 3.42 & 4.04E-05 & 3.34 & 4.08E-06 & 3.42     \\
1.49E-03  &  2.54E-05 & 2.16 & 2.62E-06 & 1.99  &  4.34E-06 & 3.10 & 3.93E-07 & 3.25 & 4.34E-06 & 3.10 & 3.93E-07 & 3.25     \\          
          &\multicolumn{12}{c}{FV-$\mathbb{P}_3$}\\
          &\multicolumn{4}{c}{w/o correction}   &\multicolumn{4}{c}{ROD-E}  &\multicolumn{4}{c}{ROD-$L^2$} \\[0.5mm]
1.28E-02  &  1.39E-03 & --   & 1.68E-04 & --    &  1.74E-04 & --   & 4.02E-05 &  --  & 1.74E-04 &  --  & 4.02E-05 &  --    \\
5.99E-03  &  3.47E-04 & 1.82 & 4.22E-05 & 1.82  &  9.65E-06 & 3.80 & 2.36E-06 & 3.73 & 9.65E-06 & 3.80 & 2.36E-06 & 3.73     \\
3.06E-03  &  8.73E-05 & 2.05 & 1.06E-05 & 2.06  &  4.26E-07 & 4.64 & 1.23E-07 & 4.39 & 4.26E-07 & 4.64 & 1.23E-07 & 4.39     \\
1.49E-03  &  2.17E-05 & 1.94 & 2.63E-06 & 1.94  &  1.92E-08 & 4.30 & 5.50E-09 & 4.32 & 1.92E-08 & 4.30 & 5.50E-09 & 4.31     \\          
          \hline\hline\\[1pt]
  \end{tabular}
  \end{table}

\subsection{Horizontal oscillating cylinder}\label{subsection:FSIcylinder}

The last numerical experiment performed in this work is a two-dimensional fluid-body interaction test case, which involves a horizontal oscillating cylinder immersed in a fluid domain. We focus on the impact of the proposed boundary correction on the qualitative results of the simulation. In particular, we compare the results obtained with and without the ROD-$L^2$ correction, while the ROD-E correction is not considered, for conciseness.
The horizontal oscillating cylinder problem is set up in a square domain of size $[-10,10]^2$ with an internal cylinder of radius $R=1$. The initial mesh of $N_E=7145$ triangles is shown in \cref{fig:OC-mesh}, with a close-up view around the cylinder.
The test case involves a circular cylinder undergoing a prescribed horizontal harmonic oscillation, starting from a fluid initially at rest. The motion is defined as \mbox{$x(t) = -A \sin(2\pi f t)$},
where $A$ and $f$ denote the amplitude and frequency of the oscillation, respectively. The parameters are set to $A=0.1$ and $f=0.1$, and the simulation is run up to the final time $t_f=10$, corresponding to one complete period.

Figure \ref{fig:OC-H-velU} displays the velocity field $u$ at selected time instants for the horizontally oscillating cylinder, offering a side-by-side comparison between the results obtained with and without the ROD-$L^2$ correction. Both simulations are run with a third-order method, which is enough to capture the impact of the boundary correction. 
The uncorrected results, shown in the left column, exhibit significant spurious disturbances near the boundary, where the geometric error clearly dominates the numerical solution and produces a thick layer of inaccuracies. In contrast, the corrected results, presented in the right column, show a much smoother and more symmetric velocity field. 
The same improvement is evident in the entropy distribution $S=p/\rho^\gamma$, reported in  \cref{fig:OC-H-entropy}. 
As a matter of fact, the entropy field should be uniform in the entire domain, which allows to visually assess the presence of spurious entropy production. 
The left column reveals a pronounced spurious entropy errors when no correction is applied, while the right column shows that the correction effectively suppresses these artefacts, achieving a more accurate representation. To ensure a fair and direct visual comparison, the same color scale is used for both corrected and uncorrected results across all figures.

\begin{figure}
  \centering
  \subfigure[]{\includegraphics[width=0.4\textwidth]{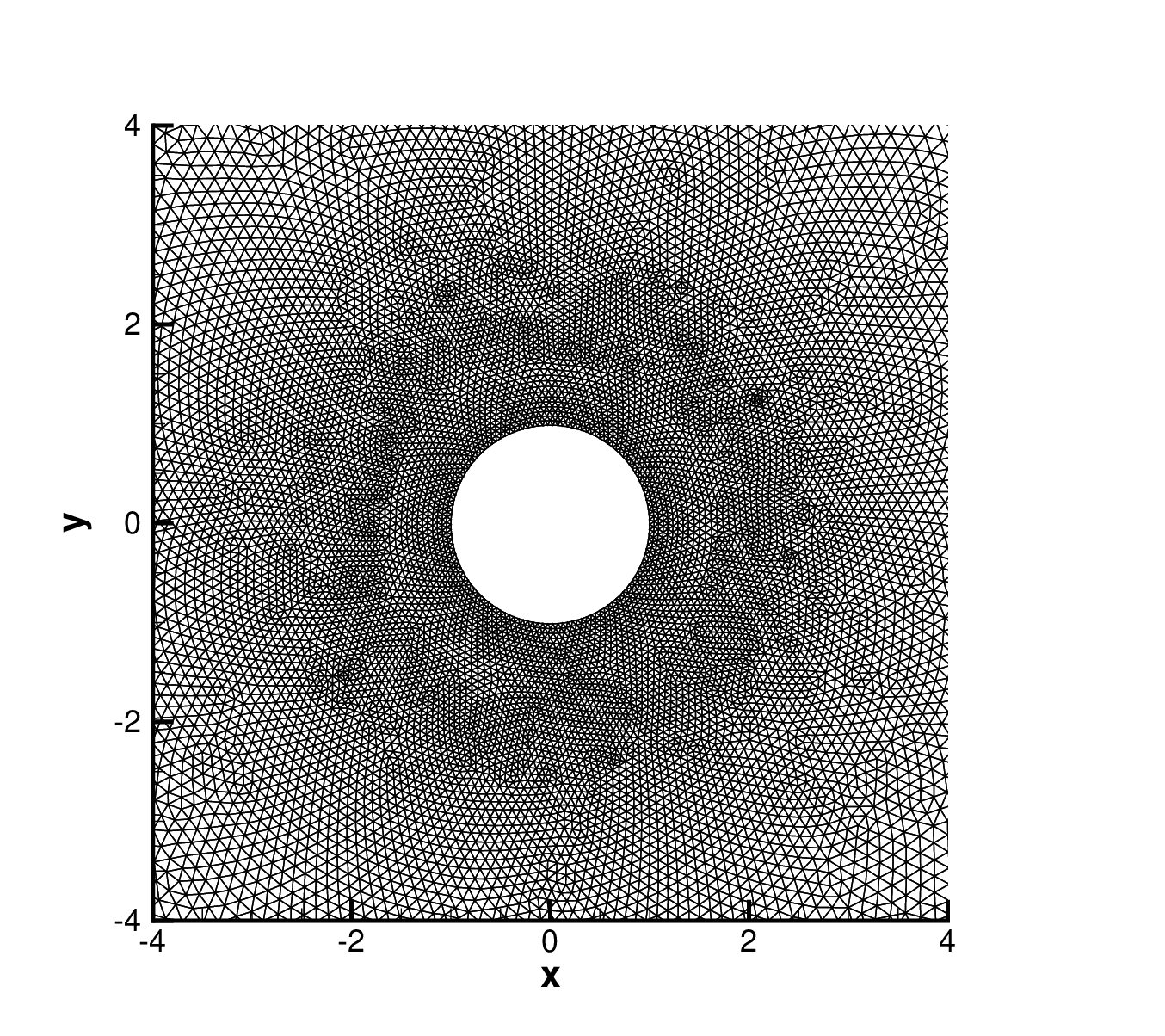}}
  \subfigure[]{\includegraphics[width=0.4\textwidth]{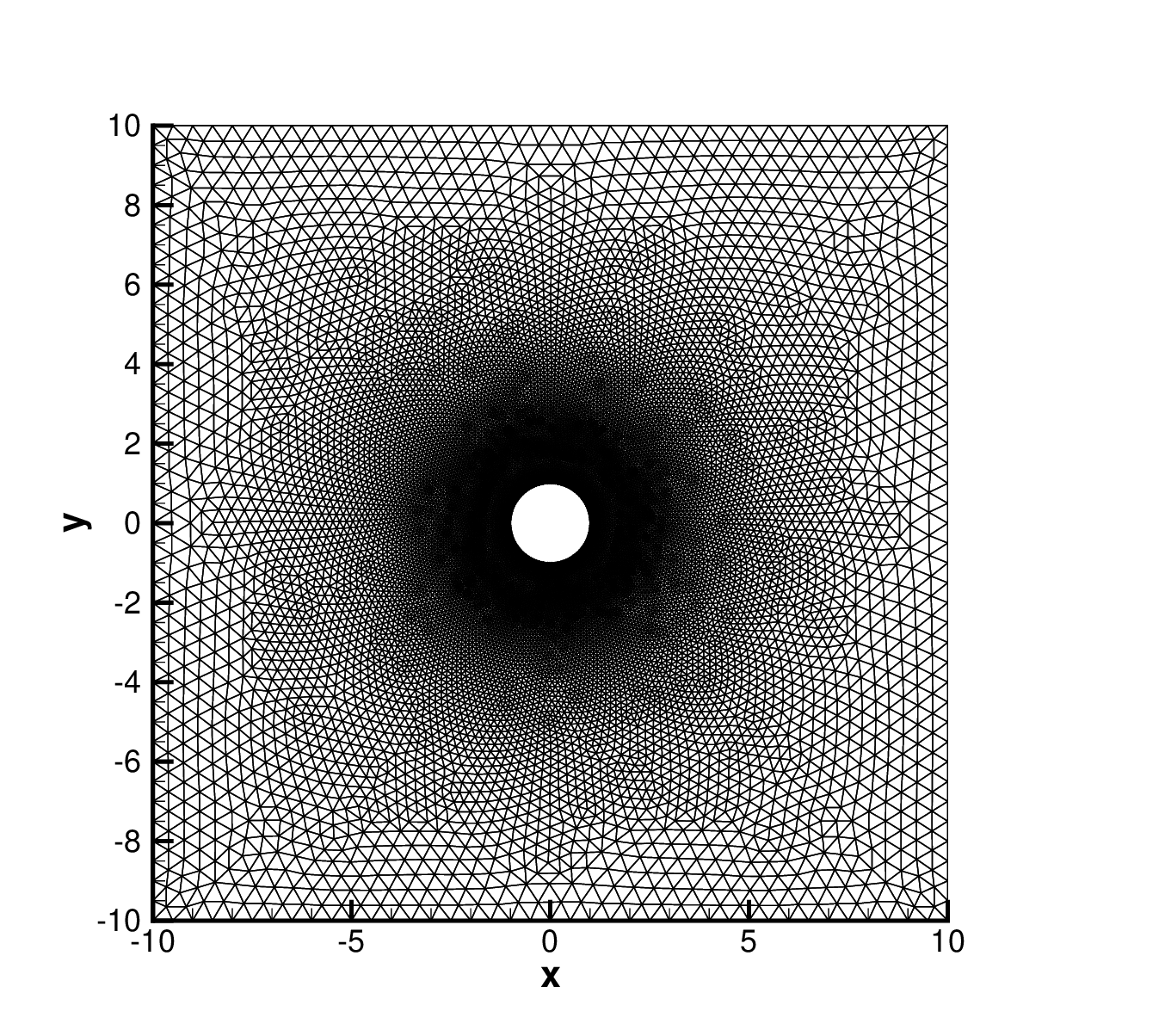}}
  \caption{Horizontal oscillating cylinder: initial mesh configuration (right) and a zoom close to the cylinder (left).}
  \label{fig:OC-mesh}
\end{figure}

\begin{figure}
  \centering
  \subfigure[$t=0.5$ s]{\includegraphics[width=0.4\textwidth]{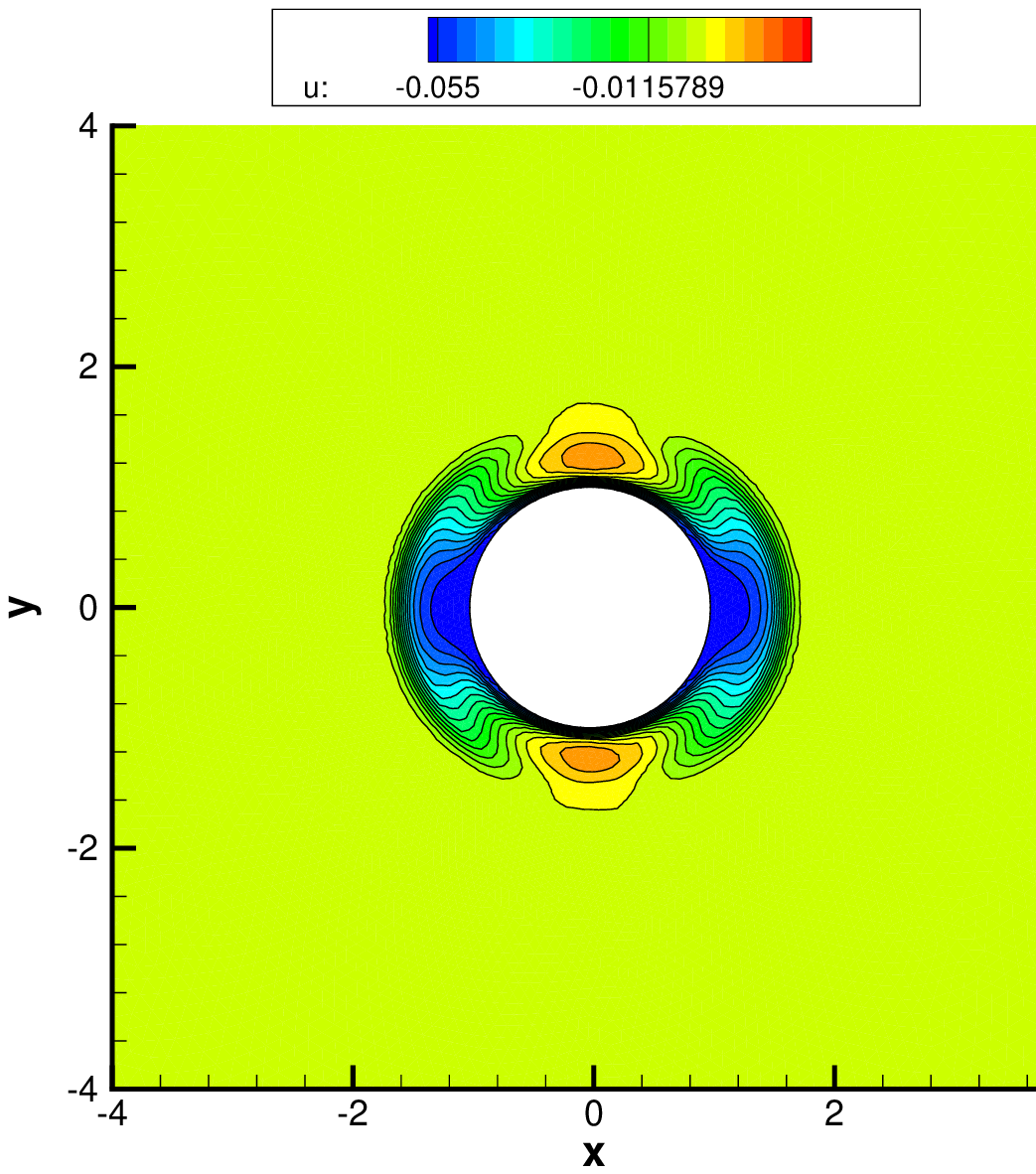}}
  \subfigure[$t=0.5$ s]{\includegraphics[width=0.4\textwidth]{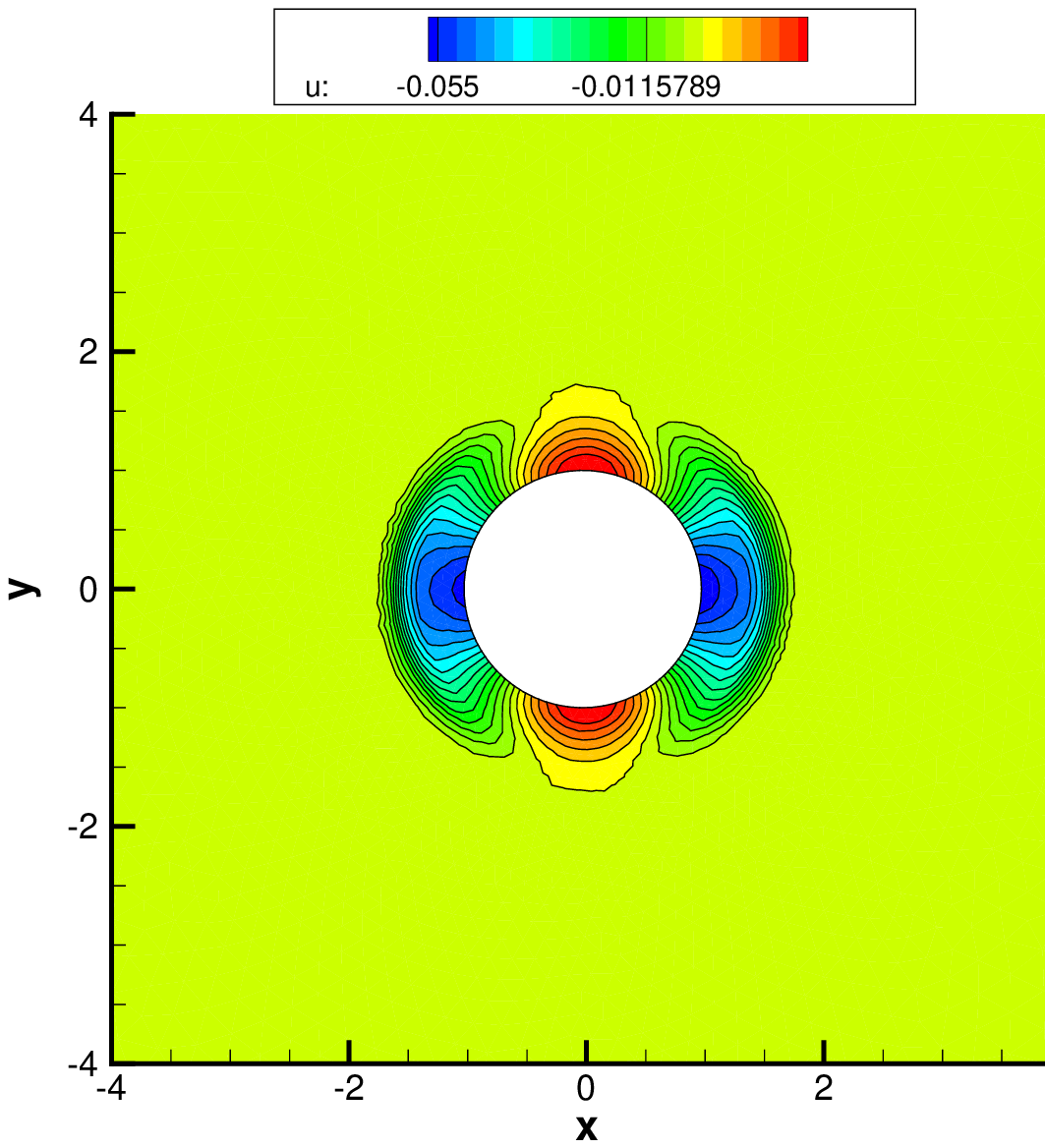}}
  \subfigure[$t=5$ s]{\includegraphics[width=0.4\textwidth]{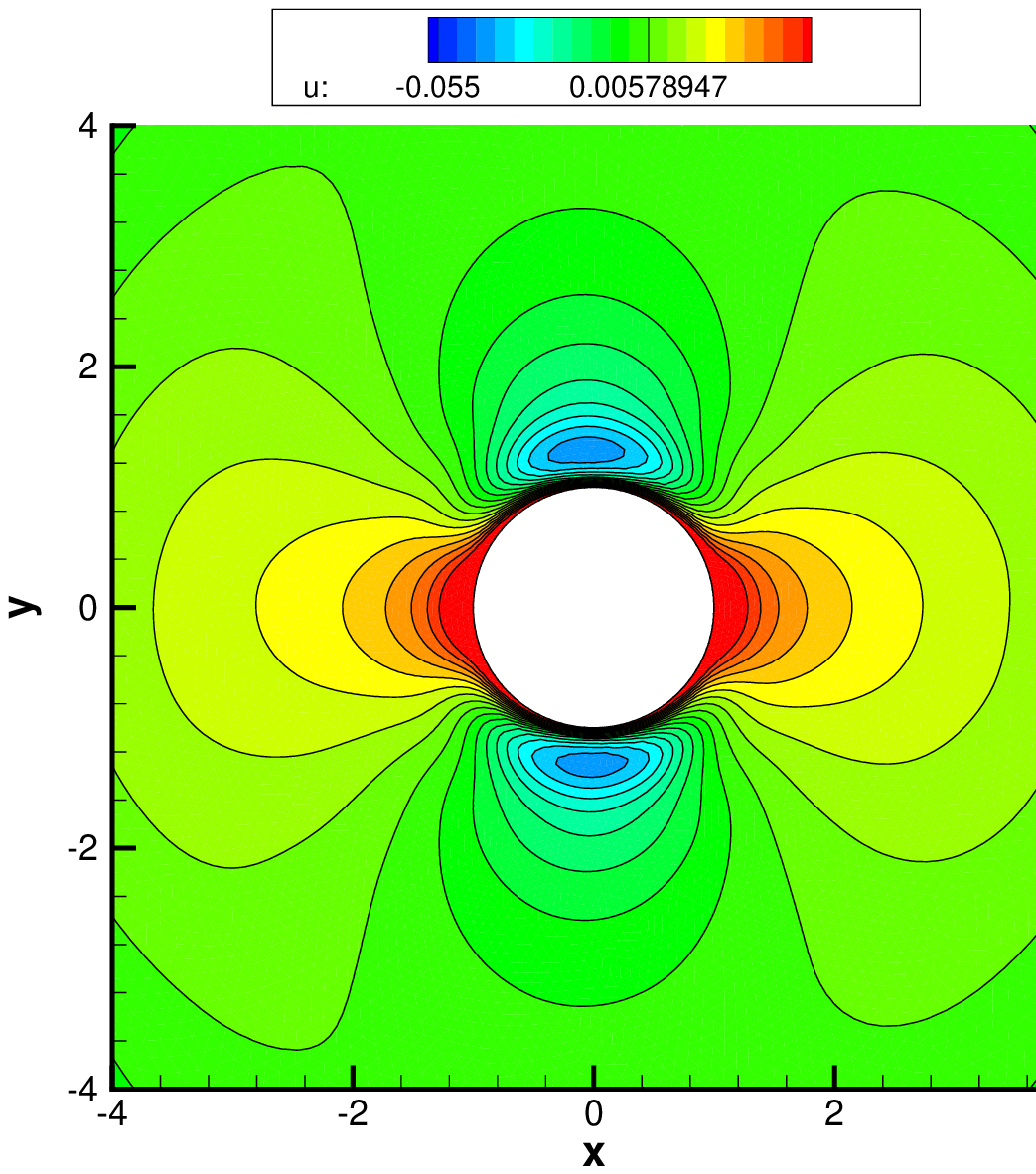}}
  \subfigure[$t=5$ s]{\includegraphics[width=0.4\textwidth]{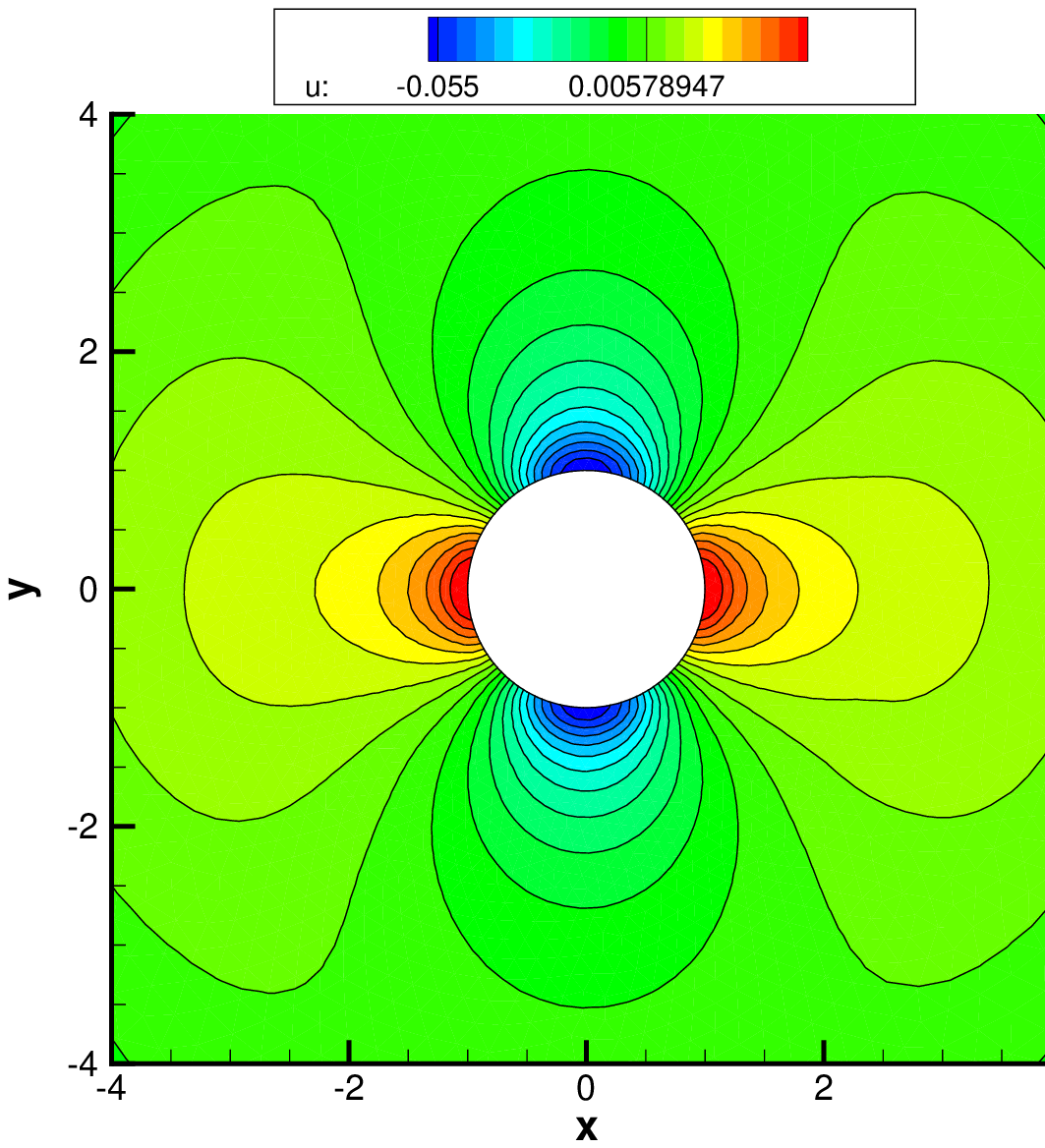}}
  \subfigure[$t=10$ s]{\includegraphics[width=0.4\textwidth]{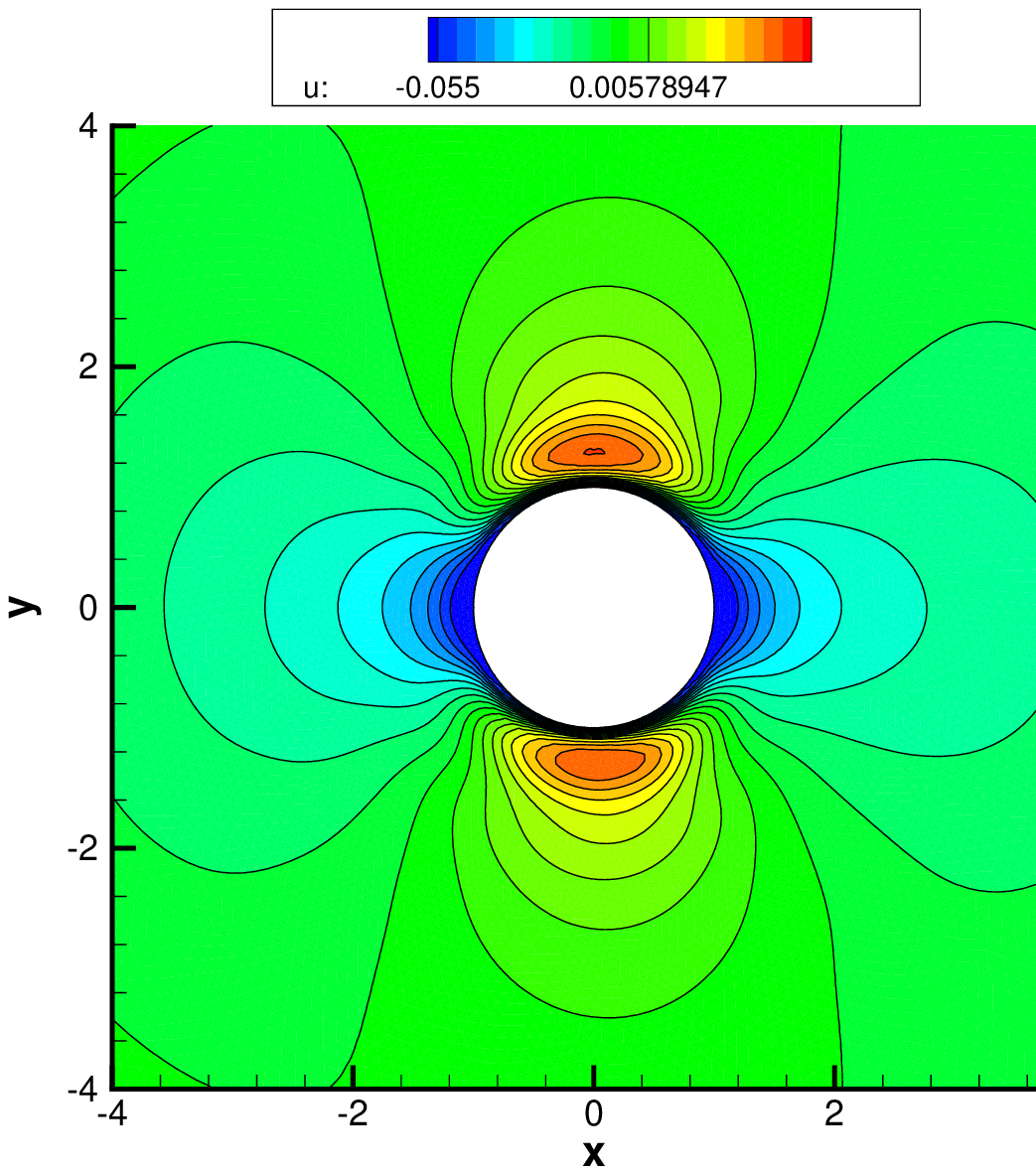}}
  \subfigure[$t=10$ s]{\includegraphics[width=0.4\textwidth]{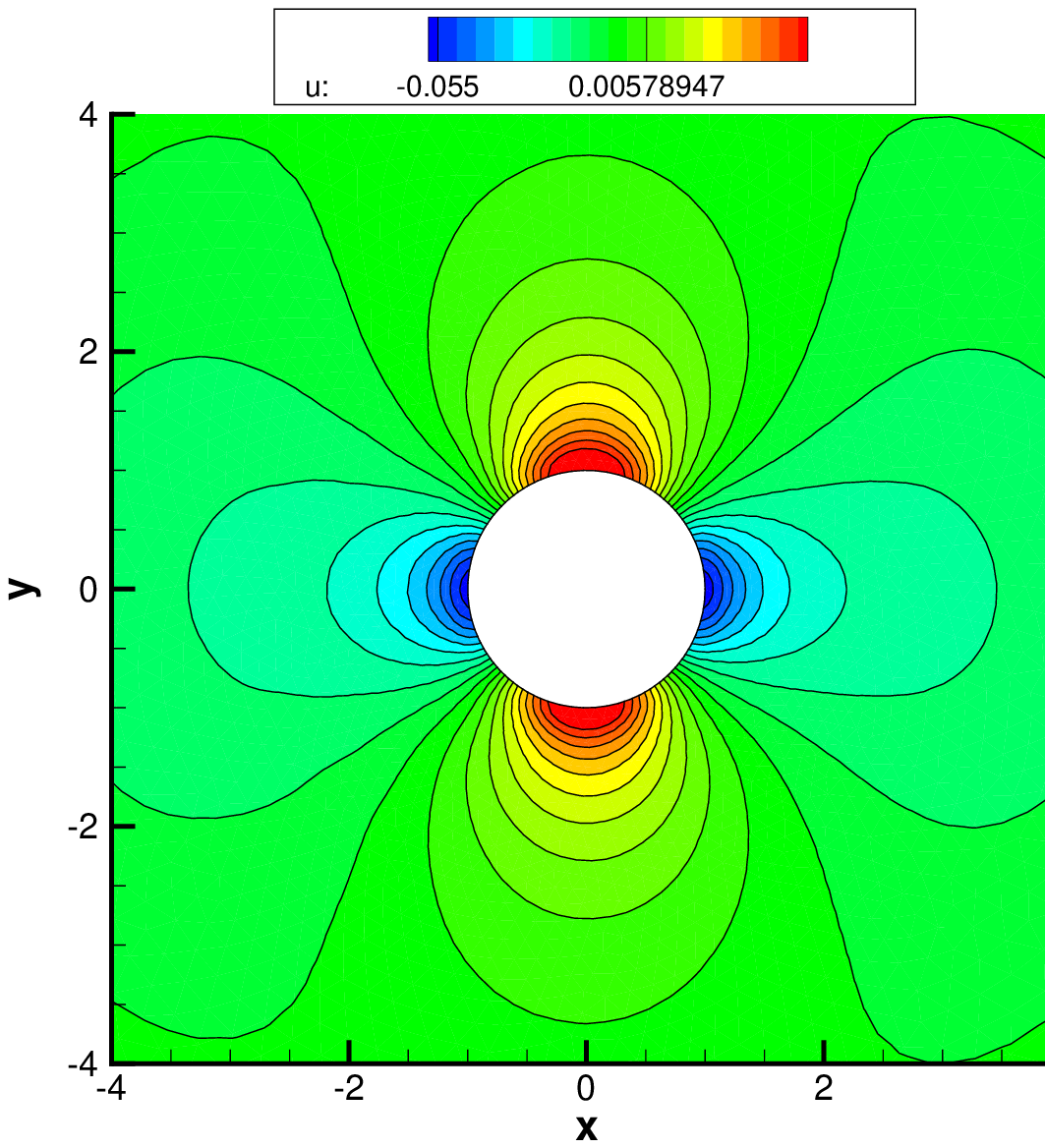}}
  \caption{Horizontal oscillating cylinder: velocity component $u$ distribution obtained with a FV-$\mathbb{P}_2$ method without correction (left) and with the ROD-$L^2$ correction (right).
  The solution is plotted at different times: $t=0.5$ s (top), $t=5$ s (middle) and $t=10$ s (bottom).}
  \label{fig:OC-H-velU}
\end{figure}

\begin{figure}
  \centering
  \subfigure[$t=0.5$ s]{\includegraphics[width=0.4\textwidth]{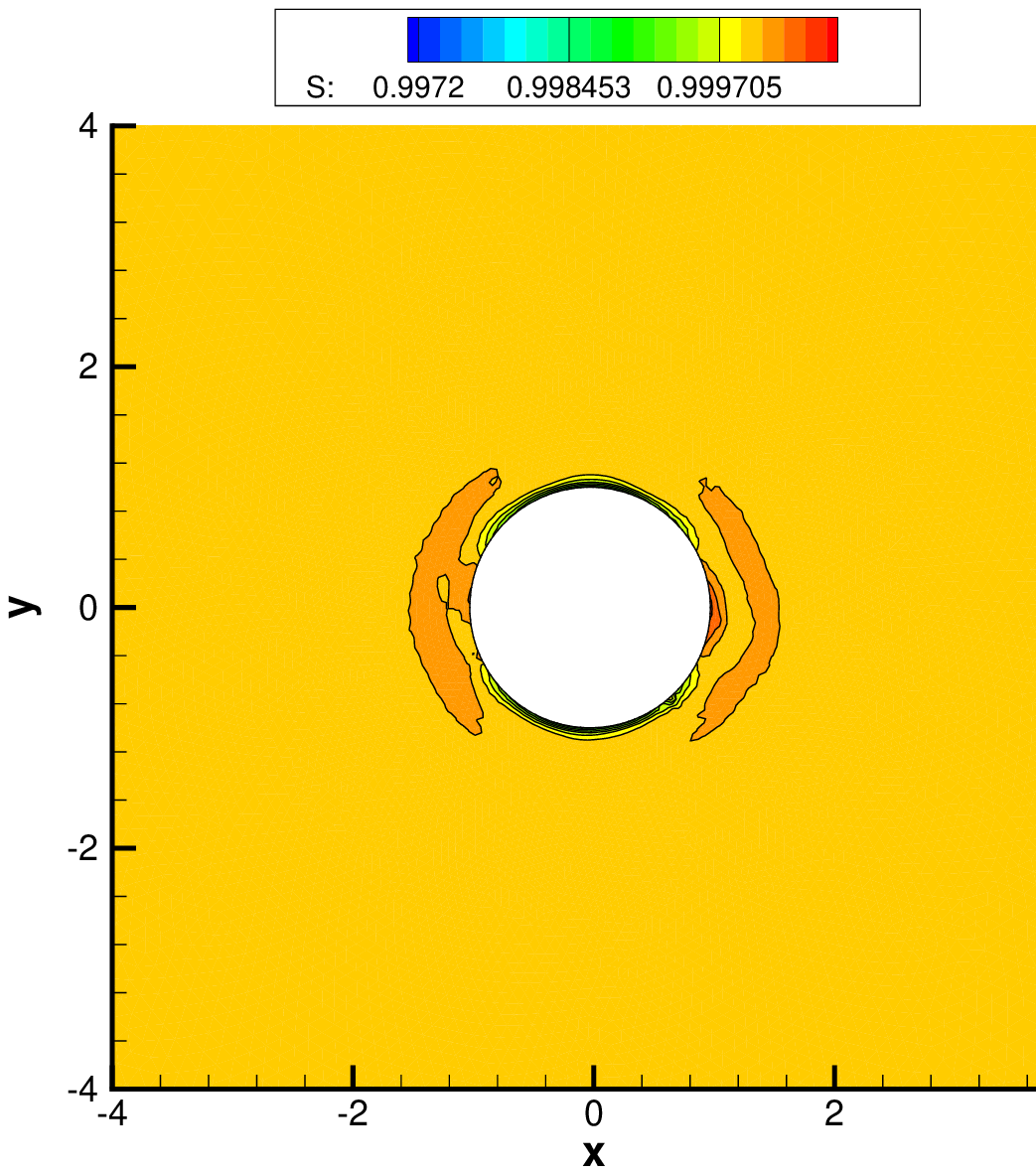}}
  \subfigure[$t=0.5$ s]{\includegraphics[width=0.4\textwidth]{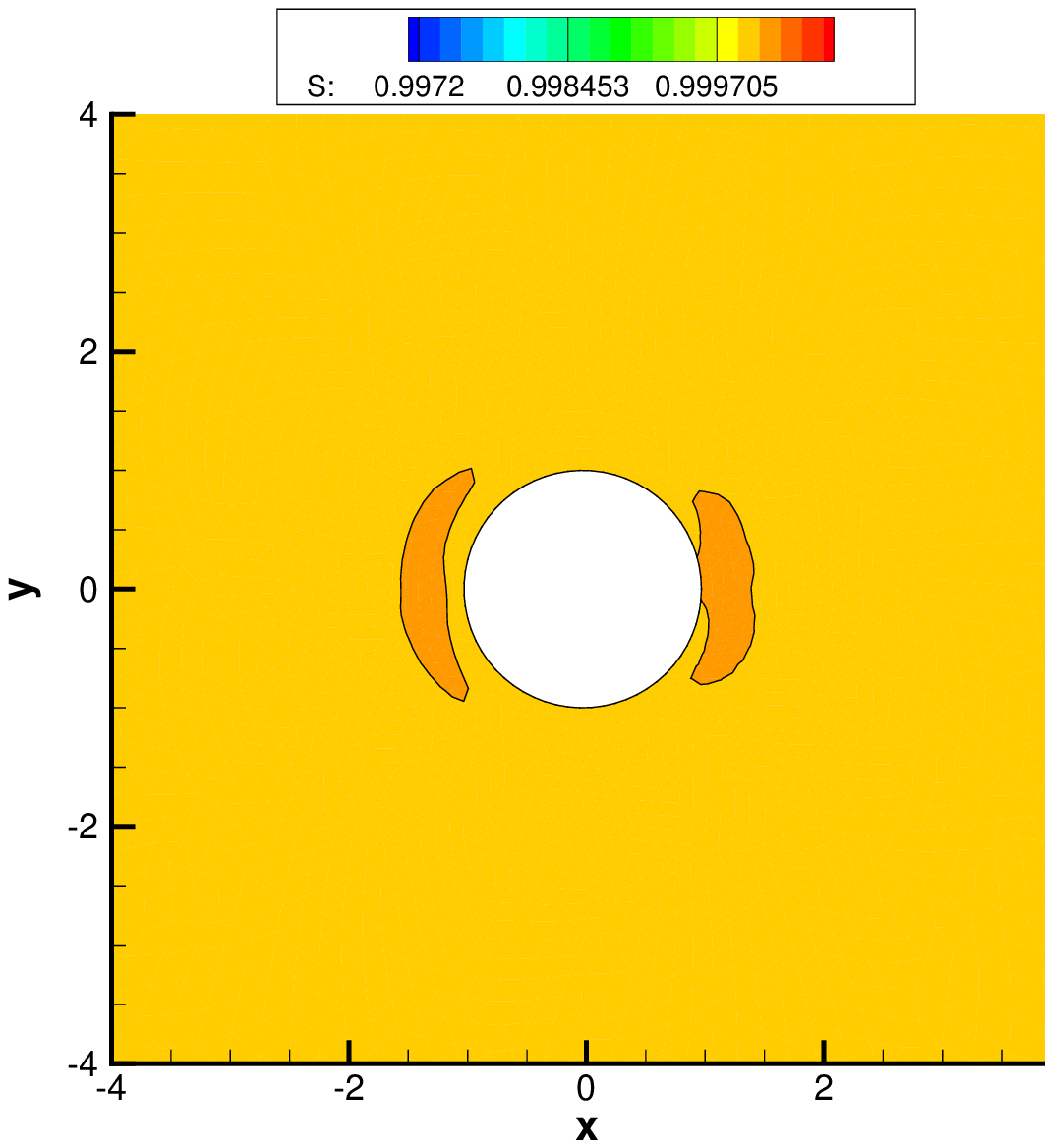}}
  \subfigure[$t=5$ s]{\includegraphics[width=0.4\textwidth]{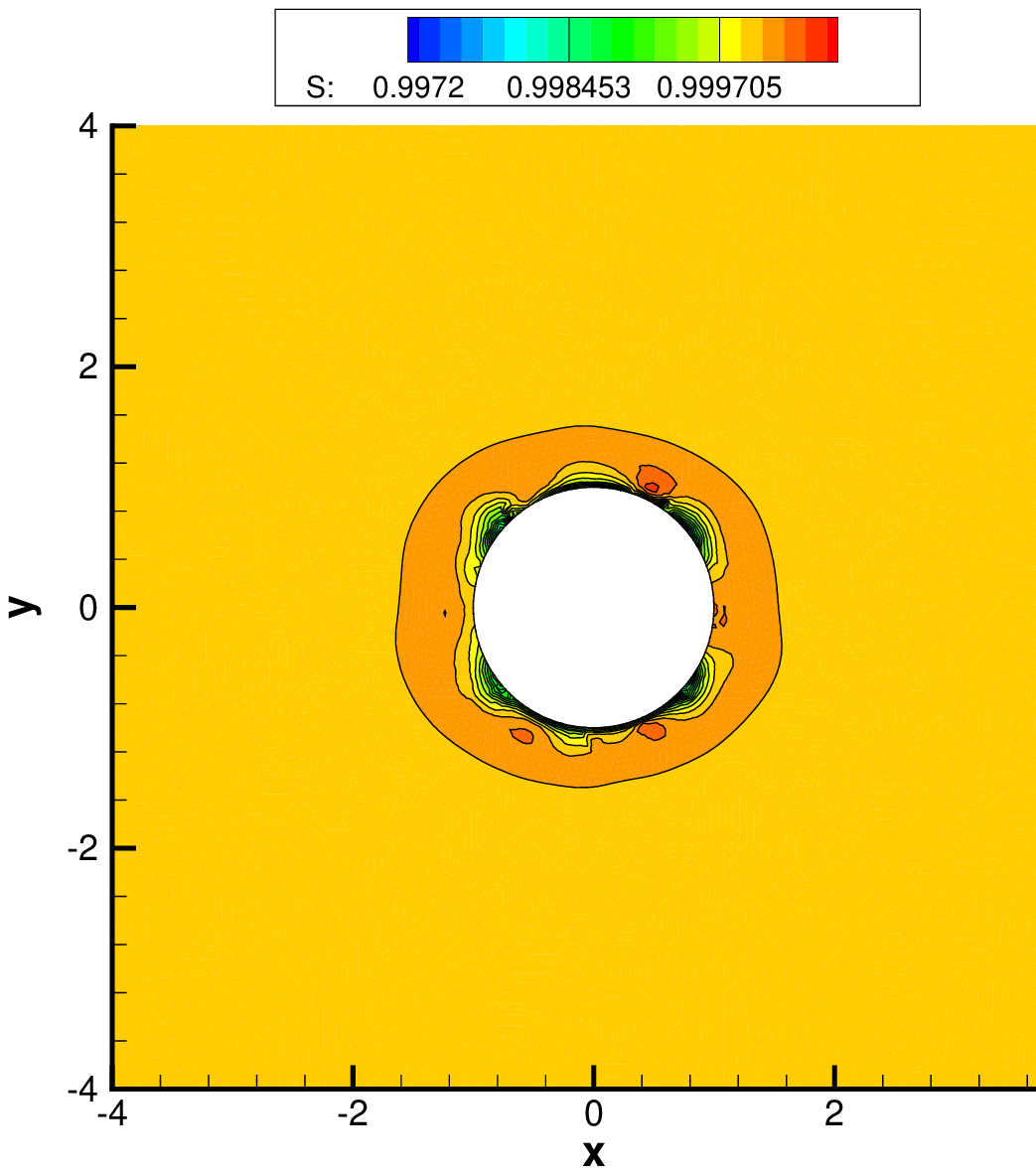}}
  \subfigure[$t=5$ s]{\includegraphics[width=0.4\textwidth]{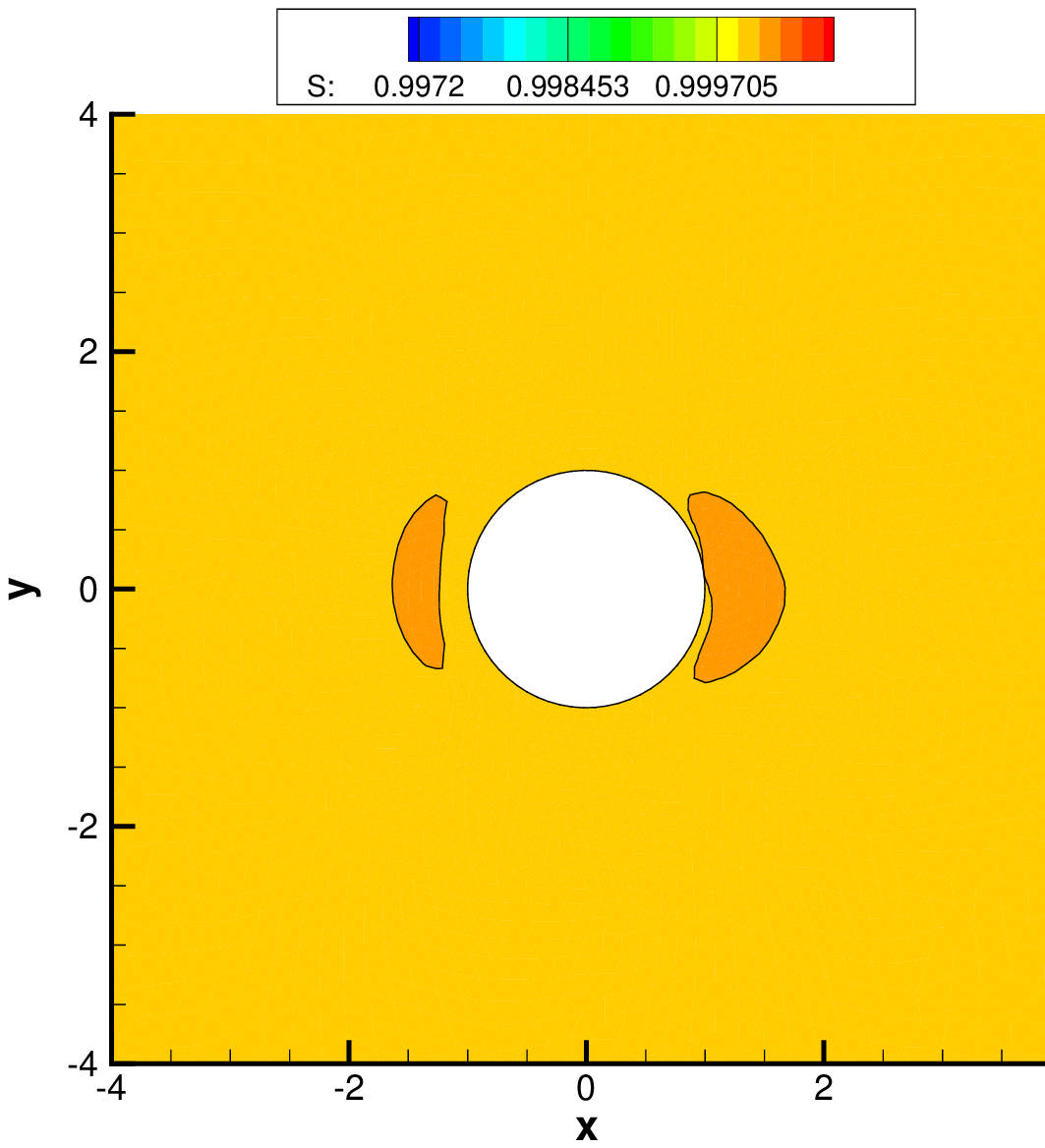}}
  \subfigure[$t=10$ s]{\includegraphics[width=0.4\textwidth]{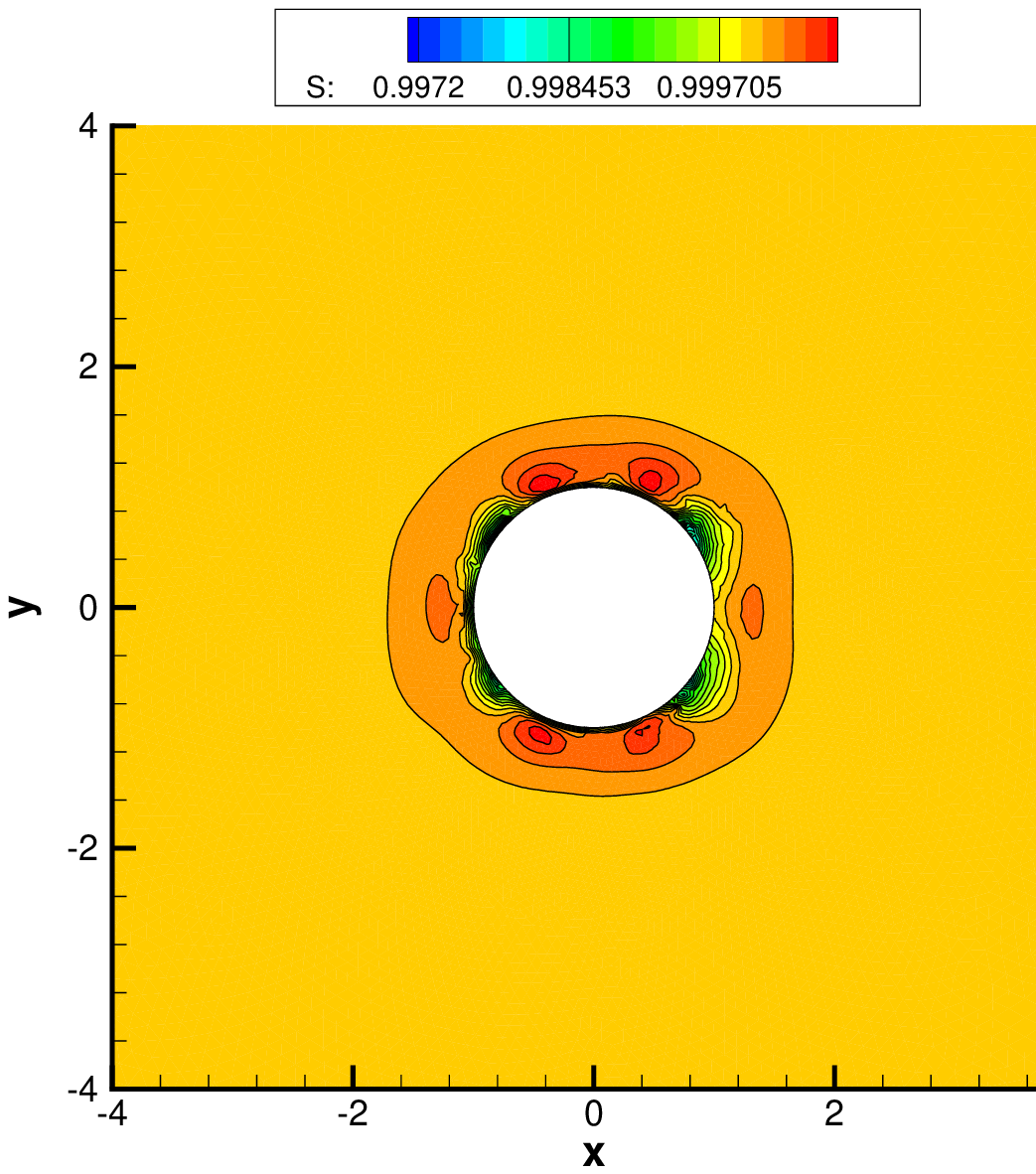}}
  \subfigure[$t=10$ s]{\includegraphics[width=0.4\textwidth]{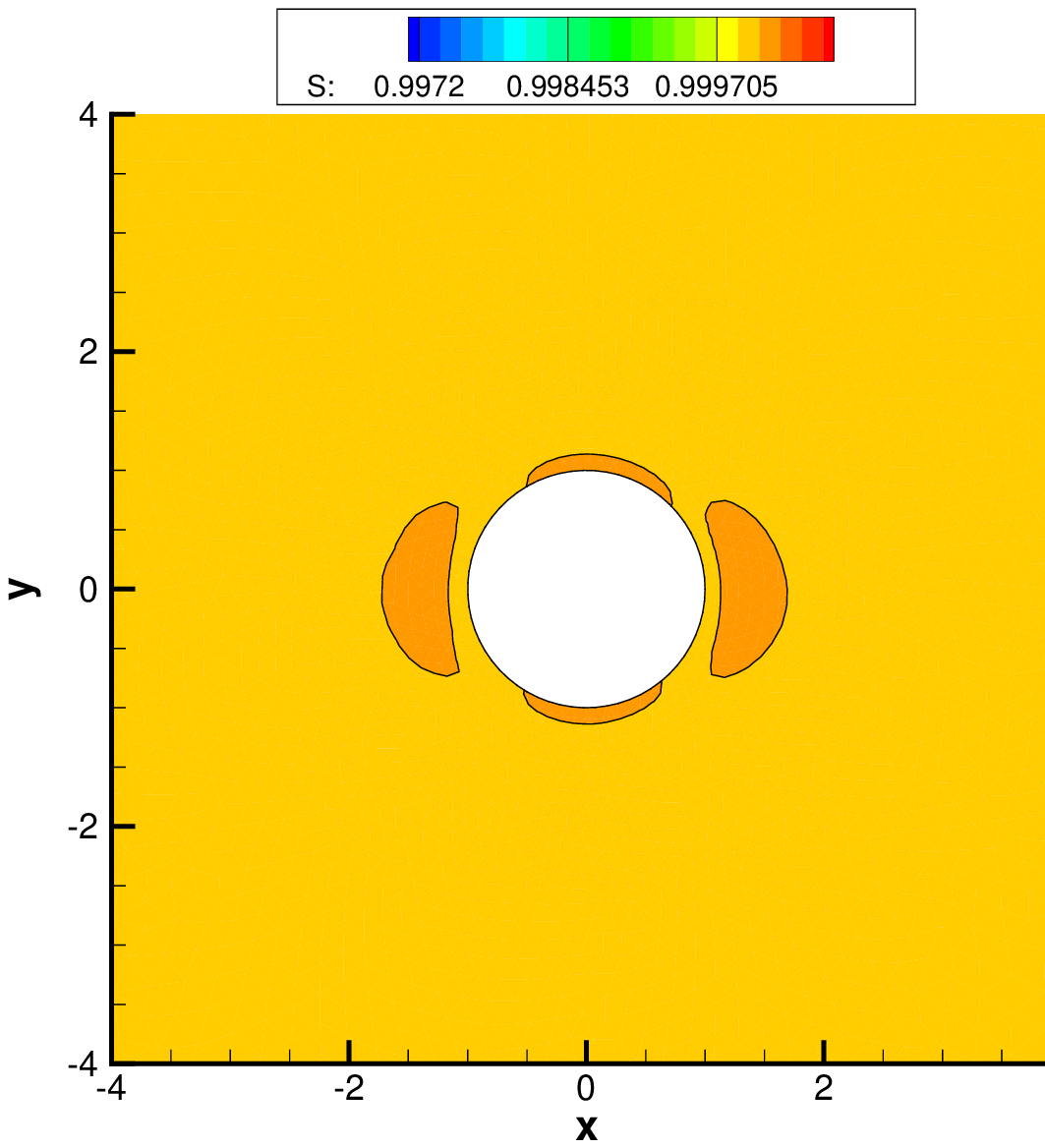}}
  \caption{Horizontal oscillating cylinder: entropy $S$ distribution obtained with a FV-$\mathbb{P}_2$ method without correction (left) and with the ROD-$L^2$ correction (right).
  The solution is plotted at different times: $t=0.5$ s (top), $t=5$ s (middle) and $t=10$ s (bottom).}
  \label{fig:OC-H-entropy}
\end{figure}

\section{Conclusions and future perspectives}\label{section:Conclusions}

In this work, we presented two novel high-order boundary treatments to compensate the geometrical error given by the piecewise affine approximation of curved boundaries. Their derivation is based on the manipulation of the ROD approach developed for DG in \cite{santos2024very} in a simplified one-dimensional setting. In this context, the ROD minimization problem no longer involves the inversion of the constraint matrix to build the modified polynomial. This allows one to manipulate it and derive a straightforward polynomial correction only valid for one given boundary point. The first contribution of this work is to show that this one-dimensional approach can be directly extended to multiple dimensions by applying it in the direction of the considered mapping $\mathcal{M}$, while preserving high-order convergence properties. In practice, the former ROD approach in multiple dimensions was building only one modified polynomial for the whole boundary cell. While the novel ROD-E and ROD-$L^2$ corrections build modified polynomials for every considered boundary point. This simplifies significantly the implementation and reduces the computational costs, since no matrix has to be inverted. The second contribution demonstrates that, although the analysis and derivation of the polynomial corrections have been formulated in the DG context, the same polynomial corrections can be applied in a versatile and general manner to several computational frameworks as long as a polynomial representation of the internal solution is provided.     
We prove the flexibility of these approaches by performing simulations and convergence analysis in the RK-DG framework with fixed curved domains, and in the more challenging ADER-ALE FV framework with moving curved domains. We show numerical results in both 2D and 3D with accuracy up to fifth order. 

The perspectives for these minimization-based methods are several. Given the novel established framework, new boundary treatments could be developed to preserve additional constraints or properties of the considered physical model. This would make them compatible with structure-preserving discretizations \cite{gassner2013skew}. More general high-order boundary conditions, like Neumann and Robin, could be developed for other physical models \cite{visbech2025spectral}. 
These approaches could be adapted to simulate high-order fluid-structure interaction problems \cite{gao2023high}, which for the moment rely on complex 3D curvilinear mesh motion \cite{boscheri2016high}. Additional studies will focus on the impact of these boundary treatments on boundary layers of viscous flows.

\section*{Acknowledgments}
WB acknowledges financial support from the \textit{Agence Nationale de la Recherche} (ANR, France) through the \textit{MONAIE} project and from the \textit{Institut de Mathématiques pour la Planète Terre} (IMPT, France) through the \textit{AAP2025-ERUPTA} project.
WB is member of the GNCS-INdAM (\textit{Istituto Nazionale di Alta Matematica}) group.

\bibliographystyle{elsarticle-num} 
\bibliography{literature}

\end{document}